\newtheorem{thm}{Theorem}[section]
\newtheorem{lem}[thm]{Lemma}
\newtheorem{prop}[thm]{Proposition}
\newtheorem{cor}[thm]{Corollary}
\theoremstyle{definition}
\theoremstyle{remark}
\newtheorem{remark}[thm]{Remark}
\newtheorem{examples}[thm]{Examples}
\numberwithin{equation}{section}
\renewcommand\thesubsection{\thesection.\Alph{subsection}}
\renewcommand{\theparagraph}{\thesubsection.\arabic{paragraph}}
\renewcommand{\theequation}{\thesection.\arabic{equation}}
 \newcommand{\R}{{\mathbb R}}
 \newcommand{\Cont}{{\mathcal C}}
\newcommand{\gtp}{{\mathfrak p}} \newcommand{\gtq}{{\mathfrak q}}
\newcommand{\gtm}{{\mathfrak m}} \newcommand{\gtn}{{\mathfrak n}}
\newcommand{\gta}{{\mathfrak a}} \newcommand{\gtb}{{\mathfrak b}}
\newcommand{\gtP}{{\mathfrak P}} \newcommand{\gtQ}{{\mathfrak Q}}
 \newcommand{\gtA}{{\mathfrak A}}
\newcommand{\Dd}{{\EuScript D}}
\newcommand{\Zz}{{\EuScript Z}}
\newcommand{\bs}{{\EuScript B}}
\newcommand{\Ff}{{\EuScript F}}
\newcommand{\Hom}{\operatorname{Hom}}
\newcommand{\im}{\operatorname{im}}
\newcommand{\qf}{\operatorname{qf}}
\newcommand{\dist}{\operatorname{dist}}
\newcommand{\Spec}{\operatorname{Spec}}
\newcommand{\Specm}{\operatorname{\Spec_{\rm max}}}
\newcommand{\tr}{\operatorname{tr}}
\newcommand{\id}{\operatorname{id}}
\newcommand{\cl}{\operatorname{Cl}}
\newcommand{\lc}{\operatorname{lc}}
\newcommand{\diam}{{\text{\tiny$\displaystyle\diamond$}}}
\newcommand{\gtmd}{\operatorname{\gtm^{\diam}\hspace{-1.5mm}}}
\newcommand{\Specs}{\operatorname{Spec_s}}
\newcommand{\Speca}{\operatorname{Spec_s^*}}
\newcommand{\Specd}{\operatorname{Spec_s^{\diam}}}
\newcommand{\betas}{\operatorname{\beta_s\!}}
\newcommand{\betaa}{\operatorname{\beta_s^*\!\!}}
\newcommand{\betad}{\operatorname{\beta_s^{\diam}\!\!}}
\newcommand{\x}{{\tt x}}  
 \renewcommand{\t}{{\tt t}}
\newcommand{\ol }{\overline}
\newcommand{\qq}[1]{\langle{#1}\rangle}
\numberwithin{equation}{section}
\renewcommand\thesubsection{\thesection.\Alph{subsection}}
\renewcommand{\theequation}{\thesection.\arabic{equation}}
\begin{document}

\title[{On the size of the fibers of spectral maps}]{On the size of the fibers of spectral maps\\ induced by semialgebraic embeddings}
\author{Jos\'e F. Fernando}
\address{Departamento de \'Algebra, Facultad de Ciencias Matem\'aticas, Universidad Complutense de Madrid, 28040 MADRID (SPAIN)}
\email{josefer@mat.ucm.es}

\date{23/11/2015}

\thanks{Author supported by Spanish GR MTM2011-22435 and Spanish MTM2014-55565. 
}
\keywords{Semialgebraic set, semialgebraic function, Zariski spectrum, spectral map, {\tt sa}-tuple, suitably arranged {\tt sa}-tuple, singleton fiber, finite fiber, infinite fiber.\\}
\subjclass[msc2010]{Primary 14P10, 54C30; Secondary 12D15, 13E99}

\begin{abstract}
Let ${\mathcal S}(M)$ be the ring of (continuous) semialgebraic functions on a semialgebraic set $M$ and ${\mathcal S}^*(M)$ its subring of bounded semialgebraic functions. In this work we compute the size of the fibers of the spectral maps $\Spec({\tt j})_1:\Spec({\mathcal S}(N))\to\Spec({\mathcal S}(M))$ and $\Spec({\tt j})_2:\Spec({\mathcal S}^*(N))\to\Spec({\mathcal S}^*(M))$ induced by the inclusion ${\tt j}:N\hookrightarrow M$ of a semialgebraic subset $N$ of $M$. The ring ${\mathcal S}(M)$ can be understood as the localization of ${\mathcal S}^*(M)$ at the multiplicative subset ${\mathcal W}_M$ of those bounded semialgebraic functions on $M$ with empty zero set. This provides a natural inclusion ${\mathfrak i}_M:\Spec({\mathcal S}(M))\hookrightarrow\Spec({\mathcal S}^*(M))$ that reduces both problems above to an analysis of the fibers of the spectral map $\Spec({\tt j})_2:\Spec({\mathcal S}^*(N))\to\Spec({\mathcal S}^*(M))$. If we denote $Z:=\cl_{\Spec({\mathcal S}^*(M))}(M\setminus N)$, it holds that the restriction map $\Spec({\tt j})_2|:\Spec({\mathcal S}^*(N))\setminus\Spec({\tt j})_2^{-1}(Z)\to\Spec({\mathcal S}^*(M))\setminus Z$ is a homeomorphism. 

Our problem concentrates on the computation of the size of the fibers of $\Spec({\tt j})_2$ at the points of $Z$. The size of the fibers of prime ideals `close' to the complement $Y:=M\setminus N$ provides valuable information concerning how $N$ is immersed inside $M$. If $N$ is dense in $M$, the map $\Spec({\tt j})_2$ is surjective and the generic fiber of a prime ideal $\gtp\in Z$ contains infinitely many elements. However, finite fibers may also appear and we provide a criterium to decide when the fiber $\Spec({\tt j})_2^{-1}(\gtp)$ is a finite set for $\gtp\in Z$. If such is the case, our procedure allows us to compute the size $s$ of $\Spec({\tt j})_2^{-1}(\gtp)$. If in addition $N$ is locally compact and $M$ is pure dimensional, $s$ coincides with the number of minimal prime ideals contained in $\gtp$.
\end{abstract}

\maketitle

\section{Introduction}\label{s1}
\renewcommand{\theequation}{I.\arabic{equation}}

This paper is part of a larger project of studying semialgebraic sets via their ring of continuous semialgebraic functions. There is an extensive classical literature on the ring ${\mathcal C}(X)$ of continuous functions on a Hausdorff space $X$ and a large part of it is collected in the celebrated book \cite{gj}. The space $X$ is canonically embedded into the spectrum of that ring and much of its topology/geometry can be recovered from the structure of the full spectrum. The tame behavior of semialgebraic functions adds some extra structure and finiteness properties. In particular, much more can be said in the non-locally compact case by carefully reducing to the ring of bounded semialgebraic functions, which is also one of the technical points in this paper.

\subsection{Motivation and preliminary notations} 
A semialgebraic set $M\subset\R^m$ is a boolean combination of sets defined by polynomial equations and inequalities. A continuous map $f:M\to\R^n$ is \emph{semialgebraic} if its graph is a semialgebraic subset of $\R^{m+n}$. As usual $f$ is a \em semialgebraic function \em when $n=1$ and $Z(f)$ denotes its zero set. The sum and product of functions defined pointwise endow the set ${\mathcal S}(M)$ of semialgebraic functions on $M$ with a structure of a unital commutative ring. In fact ${\mathcal S}(M)$ is an $\R$-algebra and the subset ${\mathcal S}^*(M)$ of bounded semialgebraic functions on $M$ is an $\R$-subalgebra of ${\mathcal S}(M)$. In this article $M$ denotes a semialgebraic subset of $\R^m$ and we write ${\mathcal S}^{\diam}(M)$ when referring to both rings ${\mathcal S}(M)$ and ${\mathcal S}^*(M)$ indistinctly. To simplify notation we write $\Specd(M):=\Spec({\mathcal S}^{\diam}(M))$ and $\betad M:=\Specm({\mathcal S}^{\diam}(M))$ to respectively denote the Zariski and the maximal spectra of ${\mathcal S}^{\diam}(M)$. In addition, $\partial M:=\betas^*M\setminus M$ is the \em remainder of $M$\em. Given a semialgebraic map ${\tt h}:M_1\to M_2$, we denote the ring homomorphism induced by ${\tt h}$ with ${\tt h}^{\diam,\ast}:{\mathcal S}^{\diam}(M_2)\to{\mathcal S}^{\diam}(M_1),\ f\mapsto f\circ{\tt h}$. This ring homomorphism is injective if and only if ${\tt h}(M_1)$ is dense in $M_2$. The \em spectral map \em induced by ${\tt h}$ is $\Specd({\tt h}):\Specd(M_1)\to\Specd(M_2),\ \gtp\mapsto ({\tt h}^{\diam,\ast})^{-1}(\gtp)$. As it is continuous, it maps $\Specd(M_1)$ into $\cl_{\Specd(M_2)}(\cl_{M_2}({\tt h}(M_1)))\cong\Specd(\cl_{M_2}({\tt h}(M_1)))$ (see \ref{closure}(ii)), so the fiber of each prime ideal belonging to $\Specd(M_2)\setminus\cl_{\Specd(M_2)}(\cl_{M_2}({\tt h}(M_1)))$ is empty. In addition the map $\Speca({\tt h}):\Speca(M_1)\to\Speca(M_2)$ maps $\betaa M_1$ into $\betaa M_2$ and we write $\betaa\,{\tt h}:=\Speca({\tt h})|_{{\betaa M_1}}:{\betaa M_1}\to{\betaa M_2}$. 

The rings ${\mathcal S}^{\diam}(M)$ are particular cases of the so-called \em real closed rings \em introduced by Schwartz in the '80s of the last century \cite{s0}. The theory of real closed rings has been deeply developed until now in a fruitful attempt to establish new foundations for semi-algebraic geometry with relevant interconnections to model theory, see the results of Cherlin-Dickmann \cite{cd1,cd2}, Schwartz \cite{s0,s1,s2,s3}, Schwartz with Prestel, Madden and Tressl \cite{ps,sm,scht} and Tressl \cite{t0,t1,t2}. We refer the reader to \cite{s1} for a ring theoretic analysis of the concept of real closed ring. Moreover, this theory, which vastly generalizes the classical techniques concerning the semi-algebraic spaces of Delfs-Knebusch \cite{dk2}, provides a powerful machinery to approach problems concerning rings of real valued functions, like: (1) real closed fields; (2) rings of real-valued continuous functions on Tychonoff spaces; (3) rings of semi-algebraic functions on semi-algebraic subsets of $\R^n$; and more generally (4) rings of definable continuous functions on definable sets in o-minimal expansions of real closed fields. In addition, the theory of real closed rings contributes to achieve a better understanding of the algebraic properties of such rings and the topological properties of their spectra. 

It is natural to wonder whether the ring ${\mathcal S}(M)$ determines the semialgebraic set $M$. Given another semialgebraic set $N$, the natural map 
$$
(\cdot)^*:{\mathcal S}(M,N)\to\Hom_{\R\text{-alg}}({\mathcal S}(N),{\mathcal S}(M)),\ {\tt h}\mapsto {\tt h}^*
$$ 
where ${\tt h}^*:{\mathcal S}(N)\to{\mathcal S}(M),\ f\mapsto f\circ{\tt h}$ is a bijection. Consequently, \em $M$ and $N$ are semialgebraically homeomorphic if and only if the rings ${\mathcal S}(M)$ and ${\mathcal S}(N)$ are isomorphic\em. This argument goes back to the pioneer work of Schwartz \cite{s0,s1,s2}. Consequently, the category of semialgebraic sets is faithfully reflected in the full subcategory of real closed rings consisting of all $\R$-algebras of the form ${\mathcal S}(M)$. Next, one wonders whether the ring ${\mathcal S}^*(M)$ determines the semialgebraic set $M$. A point $p\in M$ is an \em endpoint of $M$ \em if it has an open neighborhood $U\subset M$ equipped with a semialgebraic homeomorphism $f:U\to[0,1)$ that maps $p$ onto $0$. We denote $\eta(M)$ the set of endpoints of $M$. In \cite[\S11]{t1} it is shown that for every real closed ring $A$ there exists a largest real closed ring $B$ such that $A$ is convex in $B$. In \cite{s3} it is shown how the spectrum of a real closed ring lies in the spectrum of any convex subring. Schwartz proved in \cite[\S5]{s4} that ${\mathcal S}(M\setminus\eta(M))$ is the convex closure of the real closed ring ${\mathcal S}^*(M)={\mathcal S}^*(M\setminus\eta(M))$. If ${\mathcal S}^*(N)$ and ${\mathcal S}^*(M)$ are isomorphic as $\R$-algebras, then their convex closures ${\mathcal S}(N\setminus\eta(N))$ and ${\mathcal S}(M\setminus\eta(M))$ are also isomorphic as $\R$-algebras. Consequently, the semialgebraic sets $M\setminus\eta(M)$ and $N\setminus\eta(N)$ are semialgebraically homeomorphic. 

The study of the fibers of the spectral map $\varphi^*:\Spec B\to\Spec A$ associated to a ring homomorphism $\varphi:A\to B$ is a recurrent topic in algebraic and analytic geometry. A morphism between schemes is \em quasi-finite \em if it is of finite type and its fibers are finite. The cardinal of the fibers of $\varphi^*$ is upperly bounded by the rank of $B$ as $A$-module. Chevalley's theorem states the semicontinuity of the dimension of the fibers of morphisms of schemes that are locally of finite type \cite[13.1.3]{gd}. This result is true for the spectral morphisms induced by rational maps between complex algebraic varieties. 

In the analytic setting recall Grauert--Remmert's theorem \cite{grm}: \em an analytic map between analytic spaces $f:X\to Y$ is open if its fibers have pure dimension equal to $\dim(X)-\dim(Y)$\em. A deeper study of the fibers of analytic mappings of real or complex spaces is presented in \cite{bf}. 

Our point of view concerning the cardinality of fibers of spectral maps is closer to the latter case. The first steps in this direction are due to Brumfiel \cite{br}, Bochnak-Coste-Roy \cite[\S7]{bcr} and has a further precedent devised by Schwartz \cite{s5}. In addition \cite[Appendix A]{dk} explains the relationship between morphisms of semialgebraic spaces and their \em abstractions\em, which have the spectra of rings of semialgebraic functions as basic building blocks. Both latter articles show that the behavior of the fibers of the induced spectral mappings provides also geometric information. This appears also in \cite{fg6} where it is shown: \em a continuous semialgebraic map $h: N\to M$ is open, proper and surjective if and only if the induced spectral map is open, proper and surjective and $\betaa\,{h}(\partial N)=\partial M$\em. 

Roughly speaking our goal is the study of the spectral map associated to a suitable inclusion of semialgebraic sets $N\hookrightarrow M$ and to characterize when the fibers of this spectral map are finite. Even though the fibers of the inclusion $N\hookrightarrow M$ are either empty or singletons, this is not longer true in general for the associated spectral map and the size of its fibers provides geometric information about the embedding of $N$ in $M$. If $M$ is a simplicial complex and $N\subset M$ is obtained from $M$ by deleting some of its faces, the size of the fibers of the spectral map associated to the inclusion $N\hookrightarrow M$ provides information concerning the `nature of the deleted faces'. As all semialgebraic sets are triangulable, the previous fact will allow us to understand the semialgebraic case.

Although the rings ${\mathcal S}^{\diam}(M)$ are neither noetherian nor enjoy primary decomposition properties, they are closer to polynomial rings than to classical rings of continuous functions. For example, the Lebesgue dimension of $\R$ is $1$ (see Problem 16F in \cite{gj}) while the Krull dimension of the ring $\Cont(\R)$ of real valued continuous functions on $\R$ is infinite, see Problem 14I in \cite{gj}. Carral and Coste proved the equality $\dim({\mathcal S}(M))=\dim(M)$ for a locally closed semialgebraic set $M$ in \cite{cc} (see also \cite{g,s2,s4}) by proving that the real spectrum of ${\mathcal S}(M)$ is homeomorphic to the constructible subset $\widetilde{M}$ of the real spectrum of the ring of polynomial functions on $\R^m$ associated to $M$ (see \cite[Chapter 7]{bcr} for the technicalities concerning the real spectrum). Gamboa-Ruiz extended this equality to an arbitrary semialgebraic set in \cite{gr} using strong properties of the real spectrum of excellent rings and some crucial results of the theory of real closed rings \cite{s2}. 

In \cite{fg2} we provide an elementary geometric proof of the fact that the Krull's dimension of the ring ${\mathcal S}^{\diam}(M)$ coincides with the dimension of $M$ but without involving the sophisticated machinery of real spectra. We compute the Krull dimension of the ring ${\mathcal S}^{\diam}(M)$ by comparing it with the Krull dimensions of the rings ${\mathcal S}(X)={\mathcal S}^*(X)$ for suitable semialgebraic compactifications $X$ of $M$; recall that a pair $(X,{\tt j})$ is a \em semialgebraic compactification \em of $M$ if ${\tt j}:M\hookrightarrow X$ is a semialgebraic embedding such that $X$ is a compact semialgebraic set and $\cl({\tt j}(M))=X$. In addition, the ring ${\mathcal S}^*(M)$ is the direct limit of the family constituted by the rings ${\mathcal S}(X)$ where $(X,{\tt j})$ runs on the semialgebraic compactifications of $M$. 

We prove in \cite{fe1} that semialgebraic compactifications provide further information to study chains of prime ideals in rings of semialgebraic functions by comparing the spectra $\Specs(M)$ and $\Specs(X)$ where $X$ is a suitable semialgebraic compactification of $M$. The main purpose of \cite{fe1} is to understand the structure of non refinable chains of prime ideals of the ring ${\mathcal S}^*(M)$ for an arbitrary semialgebraic set $M$ (not necessarily locally closed). The article \cite{fe1} somehow completes the work already began in \cite{fg3}, in which we studied some algebraic, topological and functorial properties of the Zariski and maximal spectra of the rings ${\mathcal S}^{\diam}(M)$ for an arbitrary semialgebraic set $M$. Moreover, our results generalize some similar already known ones for the o-minimal context in the exponentially bounded and polynomially bounded cases that are developed under the assumption of local closedness \cite{t0}. Recall that ${\mathcal S}^*(M)$ can be understood as the \em ring of holomorphy \em of the real closed ring ${\mathcal S}(M)$ in the sense of \cite[p.40]{t1}. This provides some valuable information in relation with the chains of prime ideals containing a given ideal of ${\mathcal S}^*(M)$. To that end, one can use Gelfand-Kolgomorov's Theorem for rings with normal spectrum and the related results concerning rings of holomorphy \cite[\S10]{t1} applied to the pair of rings ${\mathcal S}^*(M)\subset{\mathcal S}(M)$.

Locally compact semialgebraic spaces (and in particular the compact ones) have an advantageous geometrical behavior \cite{bcr,cc,dk2}. The reason is that a locally compact semialgebraic set $M$ is an open subset of each Hausdorff compactification of $M$. Apart from semialgebraic compactifications, another important source of valuable information to understand $\Specs(M)$ when $M$ is non-locally compact arises from the spectrum $\Spec({\mathcal S}^{\diam}(M_{\lc}))$ where $M_{\lc}$ denotes the (semialgebraic) subset of those points of $M$ that have a compact neighborhood in $M$ (see \ref{1B}). This provides a new evidence of the importance of locally compact semialgebraic sets in semialgebraic geometry. Both types of embeddings $M\hookrightarrow X$, where $X$ is a semialgebraic compactification of $M$, and $M_{\lc}\hookrightarrow M$ share many properties and a general study of the induced spectral maps appears in \cite{fg3}. In this framework the study of the fibers of spectral maps induced by general semialgebraic embeddings plays also an important role and this is the main goal of this work.

\paragraph{} Let us fix a semialgebraic set $N$ contained in $M$. If $N$ is dense in $M$, the inclusion induces a surjective map from the Zariski spectrum of ${\mathcal S}^*(N)$ to the Zariski spectrum of ${\mathcal S}^*(M)$. This map is almost everywhere one-to-one except for what happens `close' to the complement $Y:=M\setminus N$. The size of the fibers of prime ideals `close' to the complement $Y:=M\setminus N$ provides valuable information concerning how $N$ is immersed inside $M$. The existence of infinite fibers is in some sense related to the existence of infinitely many semialgebraic ways to tend to $Y$ inside $N$ and one understands that this always occurs if $Y$ has local codimension $\geq2$ in $M$. The preceding presentation is of course very vague and, as quoted before, one main purpose of this paper is to determine the cases when the fibers are finite. More precisely, we are interested in determining the size of the fibers of the spectral maps 
$$
\Specs({\tt j}):\Specs(N)\to\Specs(M)\quad\text{and}\quad\Speca({\tt j}):\Speca(N)\to\Speca(M)
$$ 
induced by an inclusion ${\tt j}:N\hookrightarrow M$ of semialgebraic sets such that $N$ is dense in $M$. To systematize notations a $5$-tuple $(M,N,Y,{\tt j},{\tt i})$ where
\begin{itemize}
\item[(i)] $N\subset M$ is a dense semialgebraic subset of $M$,
\item[(ii)] $Y:=M\setminus N$ and ${\tt j}:N\hookrightarrow M$ and ${\tt i}:Y\hookrightarrow M$ are the inclusion maps
\end{itemize}
is called a \em semialgebraic tuple \em or a \em {\tt sa}-tuple\em. Of course, the pair $(M,N)$ determines the full tuple. As $N$ is dense in $M$, the ring homomorphism ${\tt j}^{\diam,\ast}:{\mathcal S}^{\diam}(M)\to{\mathcal S}^{\diam}(N)$ is injective and we understand ${\mathcal S}^{\diam}(M)$ as a subring of ${\mathcal S}^{\diam}(N)$.

Observe that $Y=\cl_M(N)\setminus N$ is a semialgebraic subset of $M$ whose dimension is by \cite[2.8.13]{bcr} strictly smaller than $\dim(N)=\dim(M)$. A special relevant type of {\tt sa}-tuple $(M,N,Y,{\tt j},{\tt i})$ arises when $N$ is locally closed; we call it \em suitably arranged {\tt sa}-tuple \em \cite[\S5]{fg3}. Notice that if such is the case, $N$ is open in $M$, so $Y=M\setminus N$ is closed in $M$. 

\subsection{Main results}
To ease the presentation and the ulterior proofs of our main results we collect them in a lemma and three theorems that we state here in the Introduction. We denote the \em local dimension of $M$ at a point $p\in\R^m$ \em with $\dim_p(M)$, see \cite[2.8.11]{bcr} for further details. Fix a {\tt sa}-tuple $(M,N,Y,{\tt j},{\tt i})$ and denote $Z:=\cl_{\Speca(M)}(Y)$. Let ${\mathcal W}_N$ be the multiplicative subset of all bounded semialgebraic functions on $N$ with empty zero set. 

\begin{lem}[Reduction to the ring of bounded semialgebraic functions]\label{main0}
We have:
\begin{itemize}
\item[(i)] The image of $\Specs({\tt j}):\Specs(N)\to\Specs(M)$ is $\{\gtp\in\Specs(M):\ \gtp\cap{\mathcal W}_N=\varnothing\}$. 
\item[(ii)] If $\gtp\cap{\mathcal W}_N=\varnothing$, then $\gtq\cap{\mathcal W}_N=\varnothing$ for all $\gtq\in\Speca({\tt j})^{-1}(\gtp\cap{\mathcal S}^*(M))$ and $\Specs({\tt j})^{-1}(\gtp)=\{\gtq{\mathcal S}(N):\ \gtq\in\Speca({\tt j})^{-1}(\gtp\cap{\mathcal S}^*(M))\}$. In particular, the fibers $\Specs({\tt j})^{-1}(\gtp)$ and $\Speca({\tt j})^{-1}(\gtp\cap{\mathcal S}^*(M))$ have the same size.
\item[(iii)] Let $h\in{\mathcal S}(\cl_{\R^m}(M))$ be such that $Z(h)=\cl_{\R^m}(\cl_{\R^m}(M)\setminus N)$ and denote $S:=\{\gtp\in\Specs(M):\ h\in\gtp\}$. Then the restriction map 
$$
\Specs({\tt j})|:\Specs(N)\setminus\Specs({\tt j})^{-1}(S)\to\Speca(M)\setminus S
$$ 
is a homeomorphism.
\end{itemize}
\end{lem}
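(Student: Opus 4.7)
The overall strategy rests on the identity ${\mathcal S}(M)={\mathcal S}^*(M)_{{\mathcal W}_M}$ (witnessed by writing $f=(f/\sqrt{1+f^2})/(1/\sqrt{1+f^2})$ for $f\in{\mathcal S}(M)$) and its counterpart for $N$. Under the standard localization correspondence, $\Specs(M)$ is canonically identified with $\{\tilde\gtp\in\Speca(M):\tilde\gtp\cap{\mathcal W}_M=\varnothing\}$ via $\gtp\mapsto\gtp\cap{\mathcal S}^*(M)$, and likewise for $N$. This identification is natural with respect to ${\tt j}$, so the two spectral maps $\Specs({\tt j})$ and $\Speca({\tt j})$ commute with it. All three claims then reduce to tracking where the multiplicative sets ${\mathcal W}_M$ and ${\mathcal W}_N$ sit under contraction along ${\tt j}^{\diam,\ast}$, together with some semialgebraic control near the boundary $Y$.

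For (i), I would begin by fixing the convention that $\gtp\cap{\mathcal W}_N$ means $\gtp\cap({\tt j}^{*})^{-1}({\mathcal W}_N)$, namely those $f\in\gtp$ whose restriction to $N$ is bounded and nowhere zero. The inclusion $\subseteq$ is immediate: if $\gtp=\Specs({\tt j})(\gtQ)$, then an $f\in\gtp\cap({\tt j}^{*})^{-1}({\mathcal W}_N)$ would produce $f|_N\in\gtQ\cap{\mathcal W}_N$, contradicting that $\gtQ$ comes from a localization avoiding ${\mathcal W}_N$. For the reverse inclusion, given $\gtp$ with $\gtp\cap{\mathcal W}_N=\varnothing$, I show that $\gtp\cdot{\mathcal S}(N)$ is a proper ideal: a putative relation $1=\sum_i f_ig_i$ with $f_i\in\gtp$ and $g_i\in{\mathcal S}(N)$ can, after clearing ${\mathcal W}_N$-denominators, be rewritten as $w=\sum_i f_ih_i$ with $w\in{\mathcal W}_N$ and $h_i\in{\mathcal S}^*(N)$, which forces $w$ into the image of $\gtp$ in ${\mathcal S}(N)$ and contradicts the hypothesis. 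Any prime containing $\gtp\cdot{\mathcal S}(N)$ then provides a preimage of $\gtp$.

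For (ii), the concrete description of $\Specs({\tt j})^{-1}(\gtp)$ as $\{\gtq{\mathcal S}(N):\gtq\in\Speca({\tt j})^{-1}(\gtp\cap{\mathcal S}^*(M))\}$ is a formal consequence of the localization correspondence applied to ${\mathcal S}(N)={\mathcal S}^*(N)_{{\mathcal W}_N}$, provided that every $\gtq$ in the right-hand side automatically avoids ${\mathcal W}_N$. To prove this avoidance, I argue by contradiction: if $w\in\gtq\cap{\mathcal W}_N$, I combine $w$ with a suitable power of the distinguished function $h$ from (iii) (which vanishes precisely on the outer closure of $M\setminus N$) to produce an element $w'\in{\mathcal S}^*(M)\cap\gtp$ with $w'|_N\in{\mathcal W}_N$, contradicting $\gtp\cap{\mathcal W}_N=\varnothing$. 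The equality of fiber sizes follows at once from the bijective localization correspondence on each side.

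For (iii), the cleanest route is to prove that the induced ring homomorphism ${\mathcal S}(M)[1/h]\to{\mathcal S}(N)[1/h|_N]$ is an isomorphism; the spectral homeomorphism then follows by identifying the distinguished opens $D(h)\subset\Specs(M)$ and $D(h|_N)\subset\Specs(N)$ with the spectra of these localizations (and noting that $V(h)=S$ while $V(h|_N)=\Specs({\tt j})^{-1}(S)$). Injectivity is immediate from the density of $N$ in $M$. For surjectivity, given $f\in{\mathcal S}(N)$, a Łojasiewicz inequality applied on the ambient compactifiable set $\cl_{\R^m}(M)$ to $h$ and to a bounded presentation of $f$ produces an integer $k\geq 1$ such that $h^k|_N\cdot f$ extends by zero to a continuous semialgebraic function $g\in{\mathcal S}(M)$, so that $f=g/h^k$ in ${\mathcal S}(N)[1/h|_N]$. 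The main obstacle is exactly this Łojasiewicz-type extension across $Y$: since $N$ is only assumed dense (not locally closed), one must exploit the full strength of the prescription $Z(h)=\cl_{\R^m}(\cl_{\R^m}(M)\setminus N)$ to control $f$ simultaneously near $Y$ and near the outer remainder $\partial M$. Once (iii) is in hand, the auxiliary construction sketched for (ii) is legitimized, since the element $w'$ there is manufactured by multiplying by an appropriate power of $h$ and invoking the same extension result.
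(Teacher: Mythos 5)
Your framing via the localization ${\mathcal S}^{\diam}(M)={\mathcal S}^*(M)_{{\mathcal W}_M}$ and \ref{inequality} is the right one, and your route to (iii) through the isomorphism ${\mathcal S}(M)[1/h]\to{\mathcal S}(N)[1/h|_N]$ is essentially the mechanism the paper uses (it factors through $N_{\lc}:=\cl_{\R^m}(M)\setminus Z(h)$ and applies Theorem \ref{mins} twice, with Remark \ref{mins-r} supplying exactly the \L ojasiewicz extension you defer). The problems are in (i) and in the avoidance claim of (ii). For (i), your surjectivity argument is broken in two places. Showing that $\gtp{\mathcal S}(N)$ is proper is not enough: a prime containing $\gtp{\mathcal S}(N)$ contracts to a prime \emph{containing} $\gtp$, not necessarily equal to it, so you would need the fiber ring $({\mathcal S}(M)\setminus\gtp)^{-1}\bigl({\mathcal S}(N)/\gtp{\mathcal S}(N)\bigr)$ to be nonzero, which your relation does not address. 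Worse, even the properness step is unjustified: after clearing denominators you obtain $w=\sum_if_ih_i$ with $w\in{\mathcal W}_N$, $f_i\in\gtp$ and $h_i\in{\mathcal S}^*(N)$, which only places $w$ in the \emph{ideal generated by} the image of $\gtp$ in ${\mathcal S}(N)$, whereas the hypothesis $\gtp\cap{\mathcal W}_N=\varnothing$ excludes only elements of the image itself. The paper avoids lying-over altogether: it first proves that $\Speca({\tt j})$ is surjective (Theorem \ref{main}(i), by reduction to the locally compact case), then proves the key claim \ref{previi} that every $\gtq'\in\Speca({\tt j})^{-1}(\gtp\cap{\mathcal S}^*(M))$ satisfies $\gtq'\cap{\mathcal W}_N=\varnothing$, and finally invokes \ref{inequality} to produce the preimage $\gtq'{\mathcal S}(N)$.

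The claim \ref{previi} is also exactly the first assertion of (ii), and your proposed proof of it fails when $N$ is not locally compact. In that case $Z(h)\cap N=\rho_1(N)\neq\varnothing$, so neither $h^k|_N$ nor the extension by zero of $h^k/w$ restricts to an element of ${\mathcal W}_N$; the element $w'$ you manufacture from $w$ and powers of $h$ vanishes on $\rho_1(N)$ and yields no contradiction with $\gtp\cap{\mathcal W}_N=\varnothing$. The paper's argument instead compactifies the graph of the offending function itself: with $X_2:=\cl_{\R^{m+1}}({\rm graph}(w))$, $\rho:X_2\to X_1:=\cl_{\R^m}(M)$ the projection and $\widehat w(x,y)=y$, one has $\rho(Z(\widehat w))\subset X_1\setminus N$; choosing $g\in{\mathcal S}(X_1)$ with $Z(g)=\rho(Z(\widehat w))$ and using that the prime $\gtq'\cap{\mathcal S}(X_2)$ is a $z$-ideal gives $g\in\gtp\cap{\mathcal W}_N$, the desired contradiction. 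Some argument of this kind is indispensable; your sketch does not supply it.
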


\begin{thm}\label{main}
We have:
\begin{itemize}
\item[(i)] The map $\Speca({\tt j}):\Speca(N)\to\Speca(M)$ is surjective.
\item[(ii)] For each closed semialgebraic subset $C$ of $N$, it holds
\begin{align*}
\Speca({\tt j})(\cl_{\Speca(N)}(C))&=\cl_{\Speca(M)}(C),\\
\Speca({\tt j})^{-1}(\cl_{\Speca(M)}(C)\setminus Z)&=\cl_{\Speca(N)}(C)\setminus\Speca({\tt j})^{-1}(Z).
\end{align*}
\item[(iii)] The restriction map $\Speca({\tt j})|:\Speca(N)\setminus\Speca({\tt j})^{-1}(Z)\to\Speca(M)\setminus Z$ is a homeomorphism. 
\item[(iv)] Analogous statements hold for $\betaa\,{\tt j}$ if we substitute $\Speca$ by $\betaa$\,. 
\end{itemize}
\end{thm}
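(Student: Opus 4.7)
The plan is to prove the four items in the order (iii), (i), (ii), (iv), using Lemma~\ref{main0} as the main reduction tool. Combining parts (ii) and (iii) of that lemma yields the following $\Speca$-version: if $h\in{\mathcal S}^*(M)$ satisfies $\{\gtp\in\Speca(M):h\in\gtp\}=Z$, then the restriction $\Speca({\tt j})|:\Speca(N)\setminus\Speca({\tt j})^{-1}(Z)\to\Speca(M)\setminus Z$ is a homeomorphism. So the first task is to exhibit such an $h$.

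For (iii), I take a Lojasiewicz-type separating function $\bar h\in{\mathcal S}(\cl_{\R^m}(M))$ with $\ceros(\bar h)=\cl_{\R^m}(\cl_{\R^m}(M)\setminus N)$ (for instance a bounded semialgebraic distance function) and set $h:=\bar h^{2}/(1+\bar h^{2})\in{\mathcal S}^*(M)$, a bounded function whose zero set in $M$ equals $\cl_M(Y)$. I then verify $\{\gtp:h\in\gtp\}=Z$: the inclusion $Z\subset\{\gtp:h\in\gtp\}$ is immediate because $Y\subset\ceros(h)\cap M$ and the right-hand side is Zariski closed. For the reverse, I write $Z=\{\gtp:\gtp\supset\gta_Y\}$ with $\gta_Y:=\{f\in{\mathcal S}^*(M):f|_Y=0\}$, and show every $f\in\gta_Y$ lies in $\sqrt{(h)}$: continuity gives $\ceros(f)\supset\cl_M(Y)=\ceros(h)\cap M$, and the semialgebraic Lojasiewicz inequality then yields $n\in\N$ and $K>0$ with $|f|^{2n}\leq K|h|$ on $M$, so $f^{2n}/h$ extends by zero to a function in ${\mathcal S}^*(M)$ and $f^{2n}\in(h)$. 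Lemma~\ref{main0} applied to this $h$ now delivers (iii).

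The crux of (i) is lifting primes $\gtp\in Z$. I first check that $\betaa\,{\tt j}:\betaa N\to\betaa M$ is surjective: both spaces are compact Hausdorff, the image of $\betaa\,{\tt j}$ is closed, and it contains the set $N$, which is dense in $\betaa M$ since $N$ is dense in $M$ and $M$ is dense in $\betaa M$. Now any $\gtp\in Z$ lies beneath a unique maximal ideal $\gtm\in\betaa M$ because real-closedness makes $\Speca(M)$ a root system, and I lift $\gtm$ to some $\gtm'\in\betaa N$. The primes of a real closed ring contained in a given maximal ideal form a chain, and using the cofinality of the inclusion (every $g\in{\mathcal S}^*(N)$ is bounded above by a constant of ${\mathcal S}^*(M)$) together with a going-down argument for real closed rings, I produce $\gtq\subset\gtm'$ with $\gtq\cap{\mathcal S}^*(M)=\gtp$. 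Making this descent rigorous is the main obstacle of the proof.

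Parts (ii) and (iv) follow quickly once (i) and (iii) are in hand. For the first equality of (ii), continuity of $\Speca({\tt j})$ gives $\subset$; for the reverse, since $C$ is closed in $N$ we have $\cl_M(C)\cap N=C$, so $(\cl_M(C),C,\cl_M(C)\setminus C)$ is itself an {\tt sa}-tuple, and applying (i) to it together with a comparison between $\Speca(\cl_M(C))$ and the closed subset $\cl_{\Speca(M)}(C)\subset\Speca(M)$ yields the reverse inclusion. The second equality follows by intersecting both sides of the first with $\Speca(M)\setminus Z$ and using (iii). Finally, (iv) is the restriction of (i)--(iii) to maximal ideals: surjectivity of $\betaa\,{\tt j}$ was already proved, and the homeomorphism of (iii) restricts because the correspondence $\gtq\mapsto\gtq\cap{\mathcal S}^*(M)$ sends maximal ideals of ${\mathcal S}^*(N)$ outside $\Speca({\tt j})^{-1}(Z)$ to maximal ideals of ${\mathcal S}^*(M)$ outside $Z$, via the Gelfand--Kolmogorov correspondence for real closed rings.
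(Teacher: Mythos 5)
Your plan has the dependency structure reversed relative to the paper, and this creates genuine gaps that your outline does not resolve.

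The most serious issue is the very first step, the claimed ``$\Speca$-version'' of Lemma~\ref{main0}. Parts (ii) and (iii) of that lemma live entirely on the $\Specs$-side: part (iii) gives a homeomorphism between complements of $S\subset\Specs(M)$, and part (ii) translates fiber sizes to $\Speca$ only for those $\gtp\in\Speca(M)$ that lie in the image ${\mathfrak S}(M)={\mathfrak i}_M(\Specs(M))$, i.e.\ the primes with $\gtp\cap{\mathcal W}_M=\varnothing$. But $\Speca(M)\setminus Z$ is in general much larger than ${\mathfrak S}(M)\setminus Z$: as soon as $M$ is non-compact, there are free prime ideals outside $Z$ that do meet ${\mathcal W}_M$, and Lemma~\ref{main0} is silent about their fibers. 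So combining \ref{main0}(ii)--(iii) does not even cover the domain of the map you want to prove bijective. In addition, the set $S$ in \ref{main0}(iii) corresponds to $\Zz(h)$ with $Z_M(h)=\cl_{\R^m}(\cl_{\R^m}(M)\setminus N)\cap M$, which contains $\rho_1(M)\cup\cl_M(Y)$ and is typically \emph{strictly} larger than $\cl_M(Y)$; hence $S$ is strictly larger than ${\mathfrak i}_M^{-1}(Z)$, so even on ${\mathfrak S}(M)$ you would only obtain the homeomorphism off a set bigger than $Z$. Note also that the paper's own proof of Lemma~\ref{main0}(i)--(ii) explicitly \emph{invokes} Theorem~\ref{main}(i), so using Lemma~\ref{main0} as the engine for Theorem~\ref{main} would be circular unless you first give an independent proof of surjectivity.

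Your Lojasiewicz step is also not justified as written. To show $\Zz_{\Speca(M)}(h)\subset Z$ you must show that every $f\in\gta_Y=\ker({\mathcal S}^*(M)\to{\mathcal S}^*(\cl_M(Y)))$ satisfies $f\in\sqrt{(h)}$ inside ${\mathcal S}^*(M)$. The inequality $|f|^{2n}\leq K|h|$ \emph{on $M$} would do it, but the semialgebraic Lojasiewicz inequality \cite[2.6.4]{bcr} requires the ambient set to be locally closed. The functions $f$ and $h$ live on $M$, which is in general not locally compact, and $f$ need not extend continuously to $\cl_{\R^m}(M)$, so you cannot simply move the estimate to the compact closure. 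This is precisely why the paper reserves this style of argument (Remark~\ref{mins-r}) for suitably arranged tuples, where $N$ \emph{is} locally compact, and why Theorem~\ref{main}(iii) is instead proved by an induction on $\dim M$, decomposing $\Speca({\tt j})$ through the locally compact $N_{\lc}$ via Theorem~\ref{minq}(iii) and using part (ii) along the way. Finally, for (i) you yourself flag that ``making this descent rigorous is the main obstacle''; the paper avoids inventing a going-down argument by factoring ${\tt j}_1:N_{\lc}\hookrightarrow M$ through $N$ and quoting the already-known surjectivity of $\Speca({\tt j}_1)$ for the suitably arranged tuple $(M,N_{\lc},M\setminus N_{\lc},{\tt j}_1,{\tt i}_1)$. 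Your derivations of (ii) and (iv) from (i) and (iii) are fine, but since both (i) and (iii) remain open in your argument, the proof is not complete.
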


\begin{thm}\label{main2}
We have:
\begin{itemize}
\item[(i)] If the local dimension $\dim_p(M)\geq 2$ for all $p\in Y$, then $Z$ is the smallest closed subset $T$ of $\Speca(M)$ such that the restriction map 
$$
\Speca({\tt j})|:\Speca(N)\setminus\Speca({\tt j})^{-1}(T)\to\Speca(M)\setminus T
$$ 
is a homeomorphism. 
\item[(ii)] If the local dimension $\dim_p(Y)\leq\dim_p(M)-2$ for all $p\in Y$, then $Z$ is the smallest subset $T$ of $\Speca(M)$ such that the restriction map
$$
\Speca({\tt j})|:\Speca(N)\setminus\Speca({\tt j})^{-1}(T)\to\Speca(M)\setminus T
$$ 
is a homeomorphism. If such is the case, given $\gtp\in\Speca(M)$, the fiber
$$
\Speca({\tt j})^{-1}(\gtp)\text{ is }
\left\{
\begin{array}{ll}
\!\!\text{a singleton}&\text{if $\gtp\in\Speca(M)\setminus Z$,}\\[4pt]
\!\!\text{an infinite set}&\text{if $\gtp\in Z$.}
\end{array}
\right.
$$
\item[(iii)] If $\dim(M)=1$, both $\cl_{\Speca(M)}(Y)=Y$ and $\Speca({\tt j})^{-1}(Y)\subset\betaa N\setminus N$ are finite sets.
\item[(iv)] Analogous statements hold for $\betaa\,{\tt j}$ if we substitute $\Speca$ by $\betaa$\,. 
\end{itemize}
\end{thm}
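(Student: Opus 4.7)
The three parts share a common strategic framework: Theorem~\ref{main}(iii) already exhibits $Z$ as \emph{one} closed subset for which the restriction of $\Speca({\tt j})$ to $\Speca(N)\setminus\Speca({\tt j})^{-1}(Z)$ is a homeomorphism, so what remains in (i) and (ii) is a minimality statement, while (iii) is a structural claim specific to dimension one. My plan is to reduce all three parts to a pointwise analysis of the fibers of $\Speca({\tt j})$ over $Z$: if every $\gtp\in Z$ has a non-singleton fiber, then no proper subset $T\subsetneq Z$ can make the restricted map a homeomorphism, since a homeomorphism has singleton fibers.

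The key step, and the main obstacle, is producing infinitely many prime ideals of ${\mathcal S}^*(N)$ lying over each $\gtp\in Z$ under the codimension hypothesis of (ii). I would first treat the easier subcase $\gtp=\gtm_p^*$ for $p\in Y$. Because $\dim_p(Y)\leq\dim_p(M)-2$, local semialgebraic triangulation of $M$ at $p$ produces a neighborhood in which $N$ contains an open piece of dimension $\geq 2$ whose intersection with every small sphere around $p$ is connected of positive dimension. Choosing uncountably many pairwise disjoint germs of semialgebraic half-branches $\gamma_\alpha:(0,1]\to N$ with $\gamma_\alpha(t)\to p$, each composition $f\mapsto f\circ\gamma_\alpha$ gives a homomorphism ${\mathcal S}^*(N)\to{\mathcal S}^*((0,1])$, and the pullback of the prime of germs vanishing at $0^+$ yields a prime $\gtq_\alpha\in\Speca(N)$ with $\Speca({\tt j})(\gtq_\alpha)=\gtm_p^*$. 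To see $\gtq_\alpha\neq\gtq_\beta$ for $\alpha\neq\beta$, one constructs a bounded semialgebraic function on $N$ equal to $1$ on a semialgebraic tubular neighborhood of one branch and $0$ on such a neighborhood of the other; this is possible because the branches have disjoint germs and $N$ is open near them. For a general $\gtp\in Z$ I would promote the result via specialization: every prime in $Z$ is comparable under inclusion to some $\gtm_p^*$ with $p\in Y$, and distinct primes $\gtq_\alpha$ give distinct specializations or generizations in $\Speca(N)$ whose traces on ${\mathcal S}^*(M)$ equal $\gtp$.

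Granting the fiber computation in (ii), the minimality assertion follows at once: any $T\subset\Speca(M)$ omitting a point $\gtp\in Z$ leaves the restricted map with a non-singleton fiber at $\gtp$. For (i), the weaker hypothesis $\dim_p(M)\geq 2$ at $p\in Y$ still allows the same half-branch construction to produce at least two distinct primes over $\gtm_p^*$ (it suffices to choose two half-branches in different connected components of a small neighborhood of $p$ in $N$). The set of primes with non-singleton fiber is closed in $\Speca(M)$, it is contained in $Z$, and the preceding shows it contains $Y$; since $Z=\cl_{\Speca(M)}(Y)$, it must equal $Z$, forcing any \emph{closed} set $T$ with the homeomorphism property to contain $Z$.

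Part (iii) is almost combinatorial: when $\dim(M)=1$ the set $Y$ has dimension $\leq 0$, hence is finite and closed in $M$, so $\cl_{\Speca(M)}(Y)=Y$ (each point of $Y$ is a closed point of $\Speca(M)$). After a semialgebraic triangulation each $p\in Y$ is the endpoint of finitely many one-dimensional open simplices of $N$, and the half-branch construction on each such simplex produces exactly one prime of $\Speca(N)$ lying over $\gtm_p^*$; conversely every prime in the fiber must come from such a branch by the finiteness of $1$-dimensional semialgebraic germs, giving a finite fiber that lies in $\betaa N\setminus N$ because none of these primes corresponds to a point of $N$ itself. Finally, (iv) will follow by observing that all primes constructed in the fiber arguments are saturated to maximal ones inside the closed subring ${\mathcal S}^*(N)\subset{\mathcal S}^*(N)$, so the entire analysis restricts from $\Speca$ to $\betaa$ verbatim using the compatibility of $\betaa\,{\tt j}$ with $\Speca({\tt j})$ stated in Theorem~\ref{main}(iv).
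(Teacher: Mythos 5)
Your treatment of points of $Y$ itself (curve selection producing pairwise disjoint half-branches $\alpha_i$ into $N$, and the distinct maximal ideals $\gtn_i^*=\{f:\lim_{t\to0}(f\circ\alpha_i)(t)=0\}$ lying over $\gtm_p^*$) is exactly the paper's argument for part (i), and your deduction that any \emph{closed} $T$ with the homeomorphism property must contain $Y$ and hence $Z=\cl_{\Speca(M)}(Y)$ is sound. Parts (iii) and (iv) are acceptable sketches, though in (iii) you should invoke the finiteness of $\partial N$ and the fact that in dimension one every prime of ${\mathcal S}^*(N)$ is either minimal or maximal to conclude that every element of the fiber is a free maximal ideal.

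The genuine gap is in (ii), where minimality is asserted among \emph{all} subsets $T$, so you must produce a non-singleton (indeed infinite) fiber at \emph{every} $\gtp\in Z$, not only at the closed points $\gtm_p^*$ with $p\in Y$. Your proposed ``promotion via specialization'' does not work: it is false that every prime in $Z=\cl_{\Speca(M)}(Y)$ is comparable under inclusion to some $\gtm_p^*$ with $p\in Y$ (membership in $Z$ means containing the ideal of functions vanishing on $Y$; $Z$ contains free primes whose unique maximal specialization is not of the form $\gtm_p^*$ for any $p\in Y$). Even when $\gtp\subset\gtm_p^*$, having infinitely many maximal ideals $\gtn_\alpha^*$ over $\gtm_p^*$ does not yield primes over $\gtp$: the available lifting result is going-up (Theorem \ref{minq}(iv)), not going-down, so there is no mechanism producing a generization of $\gtn_\alpha^*$ whose trace is $\gtp$ — and Example \ref{excrucial}(v) shows fibers can collapse to singletons at some primes of a chain while being infinite at others. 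The paper instead proves Lemma \ref{crucialstep}: after reducing to the bricks of $M$ (to make $M$ pure dimensional) and triangulating, the codimension-$\geq2$ hypothesis lets one build, for each $r$, closed pure-dimensional pieces $M_1,\ldots,M_r$ of $M$ meeting only along a set $C'$ with $\gtp\in\cl_{\Speca(M)}(C')$ and such that the sets $N_i=M_i\setminus Y$ are pairwise disjoint \emph{closed connected} subsets of $N$; their closures in $\Speca(N)$ are then disjoint connected components of $\cl_{\Speca(N)}(\bigsqcup_i N_i)$, and surjectivity of each $\Speca({\tt j}_i):\Speca(N_i)\to\Speca(M_i)$ (Theorem \ref{main}(i)) yields $r$ pairwise incomparable primes over $\gtp$. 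Some such global, spectral construction is indispensable here; also note that you never use the full strength of the hypothesis $\dim_p(Y)\leq\dim_p(M)-2$, which is precisely what makes this construction possible, and that the unsupported claim that ``the set of primes with non-singleton fiber is closed'' should be dropped, since (i) does not need it.
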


As $M_{\lc}$ is dense in $M$, the tuple $(M,M_{\lc},\rho_1(M):=M\setminus M_{\lc},{\tt j},{\tt i})$ is a suitably arranged {\tt sa}-tuple. It holds by Corollary \ref{rho2} that $\dim_p(\rho_1(M))\leq\dim_p(M)-2$ for all $p\in\rho_1(M)$. Thus, Theorem \ref{main2} applies and provides the size of all fibers of $\Speca({\tt j}):\Speca(M_{\lc})\to\Speca(M)$. 

Our next purpose is to compute the size of the fibers of the spectral map induced by a general {\tt sa}-tuple. As we will see in Section \ref{s4}, we initially reduce this problem to compute the size of the fibers of the spectral map $\Speca({\tt j}):\Speca(N)\to\Speca(M)$ induced by a suitably arranged {\tt sa}-tuple $(M,N,Y,{\tt j},{\tt i})$ where $M$ is pure dimensional.

\subsubsection{Finite fibers and threshold of a prime ideal}
Let $(M,N,Y,{\tt j},{\tt i})$ be a suitably arranged {\tt sa}-tuple such that $M$ is pure dimensional of dimension $d$. Observe that $N\subset M_{\lc}$ because $N$ is locally compact and dense in $M$; in particular, $\rho_1(M):=M\setminus M_{\lc}\subset Y$. Consider the auxiliary suitably arranged {\tt sa}-tuples $(M,M_{\lc},\rho_1(M),{\tt j}_1,{\tt i}_1)$ and $(M_{\lc},N,Y_2:=M_{\lc}\setminus N,{\tt j}_2,{\tt i}_2)$. By Theorem \ref{main2}(ii) we know that if $\gtp\in\cl_{\Speca(M)}(\rho_1(M))$, the fiber $\Speca({\tt j}_1)^{-1}(\gtp)$ is an infinite set. As ${\tt j}={\tt j}_1\circ{\tt j}_2$, also the fiber $\Speca({\tt j})^{-1}(\gtp)=\Speca({\tt j}_2)^{-1}(\Speca({\tt j}_1)^{-1}(\gtp))$ is an infinite set. Thus, it only remains to determine what happens for a prime ideal $\gtp\in\cl_{\Speca(M)}(Y)\setminus\cl_{\Speca(M)}(\rho_1(M))$. 

Let ${\mathcal W}_M$ be the multiplicative set of those $f\in{\mathcal S}^*(M)$ such that $Z(f)=\varnothing$ and define ${\mathcal E}_M$ as the multiplicative set of those $f\in{\mathcal S}(M)$ such that $Z(f)=M\setminus M_{\lc}$. Let $\gtp\not\in\cl_{\Speca(M)}(\rho_1(M))$ be a prime ideal of ${\mathcal S}^*(M)$. As we will see in \ref{spectracomp} there exists a unique maximal ideal $\gtm^*$ of ${\mathcal S}^*(M)$ that contains $\gtp$. Let $\gtm$ be the unique maximal ideal of ${\mathcal S}(M)$ such that $\gtm\cap{\mathcal S}^*(M)\subset\gtm^*$, see \ref{spectracomp}. On the other hand, let $\ol{\gtp}$ be any prime ideal of ${\mathcal S}^*(M)$ contained in $\gtp$ such that $\ol{\gtp}\cap{\mathcal E}_M=\varnothing$ but $\gtq\cap{\mathcal E}_M\neq\varnothing$ for each prime ideal $\gtq$ of ${\mathcal S}^*(M)$ that strictly contains $\ol{\gtp}$. Notice that such a prime ideal $\ol{\gtp}$ exists because by Theorem \ref{minimalnlc} no minimal prime ideal of ${\mathcal S}^*(M)$ intersects ${\mathcal E}_M$. Consider the prime ideal
\begin{equation}\label{gtQ}
\widehat{\gtp}:=\begin{cases}
\ol{\gtp}&\text{ if $\gtp\cap{\mathcal W}_M=\varnothing$},\\
\gtm\cap{\mathcal S}^*(M)&\text{ if $\gtp\cap{\mathcal W}_M\neq\varnothing$}.
\end{cases}
\end{equation}
By \ref{chainp} it holds $\widehat{\gtp}\subset\gtp$, so $\widehat{\gtp}\not\in\cl_{\Speca(M)}(\rho_1(M))$. As we see in Lemma \ref{e}, $\widehat{\gtp}$ is univocally determined by $\gtp$ and if $C$ is a closed subset of $M$ such that $\gtp\in\cl_{\Speca(M)}(C)$, then $\widehat{\gtp}\in\cl_{\Speca(M)}(C)$. In addition every non-refinable chain of prime ideals of ${\mathcal S}^*(M)$ through $\gtp$ contains also $\widehat{\gtp}$. In particular, the minimal prime ideals of ${\mathcal S}^*(M)$ contained in $\widehat{\gtp}$ are the same as those contained in $\gtp$. We call $\widehat{\gtp}$ the \em threshold of $\gtp$ in ${\mathcal S}^*(M)$\em. 

\begin{thm}[Finite fibers]\label{fiberspec} 
The fiber $\Speca({\tt j})^{-1}(\gtp)$ is finite if and only if 
$$
{\tt d}_M(\widehat{\gtp}{\mathcal S}(M)):=\min\{\dim(Z(f)):\,f\in\widehat{\gtp}{\mathcal S}(M)\}=d-1. 
$$
Moreover, if such is the case, the size of $\Speca({\tt j})^{-1}(\gtp)$ coincides with the (finite) number of minimal prime ideals of ${\mathcal S}^*(M)$ contained in $\widehat{\gtp}$ (or equivalently in $\gtp$).
\end{thm}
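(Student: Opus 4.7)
The plan is to reduce the problem to the threshold $\widehat{\gtp}$, then translate finiteness of the fiber into a geometric (dimension-theoretic) condition by means of a semialgebraic triangulation adapted to a function in $\widehat{\gtp}$ whose zero set realizes the minimum ${\tt d}_M(\widehat{\gtp}{\mathcal S}(M))$. The key preliminary is that, by the properties of $\widehat{\gtp}$ stated just before the theorem (and Lemma \ref{e}), every non-refinable chain of prime ideals of ${\mathcal S}^*(M)$ through $\gtp$ passes through $\widehat{\gtp}$, and the minimal prime ideals contained in $\gtp$ and in $\widehat{\gtp}$ coincide. Combined with Lemma \ref{main0}(i)--(ii), which reduces the study of $\Specs({\tt j})$ to $\Speca({\tt j})$ and produces a bijection between $\Speca({\tt j})^{-1}(\gtp)$ and $\Speca({\tt j})^{-1}(\widehat{\gtp})$ transporting minimal-prime counts, I may assume $\gtp=\widehat{\gtp}$.

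For the implication ``infinite fiber $\Longrightarrow$ ${\tt d}_M(\widehat{\gtp}{\mathcal S}(M))\leq d-2$'', I argue by contraposition in the spirit of Theorem \ref{main2}(ii). If there exists $f_0\in\widehat{\gtp}$ with $\dim Z(f_0)\leq d-2$, then since $M$ is pure dimensional of dimension $d$ and $N\subset M_{\lc}$, arbitrarily small neighborhoods of $Z(f_0)\cap\cl(Y)$ in $N$ have infinitely many semialgebraically path-connected components accumulating on $Z(f_0)$; each such component yields, via ultrafilter-type constructions on ${\mathcal S}^*(N)$ as in \S\ref{s4}'s forthcoming machinery, a distinct prime of the fiber. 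The codimension-$\geq 2$ hypothesis is exactly what guarantees infinitely many approach directions.

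For the converse and the count, suppose ${\tt d}_M(\widehat{\gtp}{\mathcal S}(M))=d-1$ and choose $f_0\in\widehat{\gtp}$ with $\dim Z(f_0)=d-1$. I would pick a semialgebraic triangulation of $\cl_{\R^m}(M)$ compatible with $M$, with $Y$, and with $Z(f_0)$. Because $\widehat{\gtp}\notin\cl_{\Speca(M)}(\rho_1(M))$, the relevant approach to $Y$ occurs through $M_{\lc}$, and pure dimensionality forces $N$ near $Z(f_0)\cap\cl(Y)$ to decompose as a finite union of open $d$-dimensional simplices $\sigma_1,\dots,\sigma_s$ meeting along lower-dimensional faces contained in $Z(f_0)$. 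Each $\sigma_i$ determines a minimal prime $\gtp_i$ of ${\mathcal S}^*(M)$, and by Theorem \ref{minimalnlc} these are exactly the minimal primes contained in $\widehat{\gtp}$. Extending $\widehat{\gtp}$ along $\sigma_i\cap N$ via the density of $N$ in each $\sigma_i$ produces a prime $\gtq_i\in\Speca({\tt j})^{-1}(\widehat{\gtp})$, and the map $\gtp_i\mapsto\gtq_i$ is the required bijection.

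The main obstacle is the last bijection: I must prove both that distinct branches $\sigma_i$ produce distinct primes of ${\mathcal S}^*(N)$ above $\widehat{\gtp}$ and, more delicately, that no prime of $\Speca({\tt j})^{-1}(\widehat{\gtp})$ escapes this local picture. The first half uses that on each open $\sigma_i$ there are semialgebraic functions that vanish on $\sigma_j\cap N$ for $j\neq i$ but not on $\sigma_i\cap N$, separating the $\gtq_i$. The second half is harder: one must show that every prime of ${\mathcal S}^*(N)$ lying over $\widehat{\gtp}$ is ``supported'' on some single $\sigma_i$, which rests on the threshold property of $\widehat{\gtp}$ (containing enough hypersurface-like functions to cut off all but one branch) together with the suitably-arranged hypothesis that $Y$ is closed in $M$. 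This controls the behavior of ${\mathcal S}^*(N)$-functions near $Y$ and is where the assumption ${\tt d}_M(\widehat{\gtp}{\mathcal S}(M))=d-1$ is used crucially, as it guarantees exactly the finitely-many-branch local picture.
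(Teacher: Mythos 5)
Your geometric picture for the finite case --- triangulate compatibly with $Z(f_0)$ and $Y$, identify the $d$-simplices $\sigma_1,\dots,\sigma_s$ sharing the $(d-1)$-face that carries $\widehat{\gtp}$, and match branches with minimal primes via Theorem \ref{minimalnlc} --- is exactly the paper's strategy in \S\ref{s}, and your sketch of the infinite case is in the spirit of Lemma \ref{crucialstep}. However, there are two genuine gaps. First, the opening reduction ``I may assume $\gtp=\widehat{\gtp}$'' is not delivered by Lemma \ref{main0}, which only relates $\Specs({\tt j})$ to $\Speca({\tt j})$; it says nothing about comparing the fibers of $\gtp$ and of its threshold. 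That the fiber of every prime in the chain between $\widehat{\gtp}=\gtm\cap{\mathcal S}^*(M)$ and $\gtm^*$ has the same size as the fiber of $\widehat{\gtp}$ is precisely the content of Lemma \ref{pmchain}, whose proof is a substantial six-step argument (semialgebraic trivialization of the projection $\pi^{-1}(Y)\to Y$ off a small set $V$, exclusion of $\cl_{\Speca(M)}(V)$ by a depth computation, and a homeomorphism of closures of connected components). Without it, nothing prevents the chain of primes of ${\mathcal S}^*(N)$ above a $\gtq_1$ lying over $\widehat{\gtp}$ from being strictly longer than the chain above $\widehat{\gtp}$ in ${\mathcal S}^*(M)$, which would inflate the fibers of the $\gtp_k$ with $\widehat{\gtp}\subsetneq\gtp_k$.

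Second, and more fundamentally, you prove that distinct branches give distinct primes (your separating functions do this, as in the paper), but you never actually establish that \emph{within} a single branch $M_i$ the fiber over $\widehat{\gtp}_i$ is a singleton --- i.e.\ that two comparable primes $\gtq\subsetneq\gtq'$ of ${\mathcal S}^*(N_i)$ cannot both contract to $\widehat{\gtp}_i$. This is Lemma \ref{fiberspec3}, and its proof is not a triangulation argument at all: it shows $\widehat{\gtp}_i{\mathcal S}(M_i)$ is a $z$-ideal sandwiched immediately above the unique minimal prime, computes $\tr\deg_\R\qf({\mathcal S}^*(N)/\gtq)=d-1$ for any prime in the fiber using \ref{sd}(iii), and derives a contradiction from a brimming semialgebraic compactification via the strict monotonicity of ${\tt d}$ along chains of $z$-ideals. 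It also requires knowing that each $\widehat{\gtp}_i$ contains exactly one minimal prime of ${\mathcal S}^*(M_i)$, which the paper extracts from the separation Lemma \ref{intcomp1} together with an openness argument for $Z_{M_i}(f_j)\setminus C$ inside the pure-dimensional simplex $\sigma_i$. Your phrase ``the threshold property \dots{} containing enough hypersurface-like functions to cut off all but one branch'' gestures at this but supplies no mechanism; as written, the count could overshoot the number of minimal primes. The infinite-fiber direction is also only a sketch (the ``ultrafilter-type constructions'' must be replaced by surjectivity of $\Speca$ of the inclusion for each of the $r$ disjoint $d$-simplices attached to the common face containing $C$, as in Lemma \ref{crucialstep}), but that direction is at least structurally sound.
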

\begin{remark}
Notice that if $\dim(M)=1$, the previous result can be translated as: \em Assume $p\in Y$. Then the (finite) size of the fiber $\Speca({\tt j})^{-1}(\gtm_p^*)$ equals the number of semialgebraic half-branches of the germ $M_p$ \em (use \cite[7.3]{fe1}).
\end{remark}

The proof of Theorem \ref{fiberspec} relies on the following two lemmas that we prove in Section \ref{s5}.

\begin{lem}\label{pmchain}
Let $\gtm\in\cl_{\Specs(M)}(Y)$ be a maximal ideal of ${\mathcal S}(M)$ such that ${\tt d}_M(\gtm)=d-1$. Let $\gtp_1:=\gtm\cap{\mathcal S}^*(M)\subsetneq\cdots\subsetneq\gtp_r=\gtm^*$ be the collection of all prime ideals of ${\mathcal S}^*(M)$ that contain $\gtp_1$. Let $\gtq_1$ be a prime ideal of ${\mathcal S}^*(N)$ such that $\Speca({\tt j})(\gtq_1)=\gtp_1$ and let $\gtq_1\subsetneq\cdots\subsetneq\gtq_s$ be the collection of all prime ideals of ${\mathcal S}^*(N)$ that contain $\gtq_1$. Then $s=r$.
\end{lem}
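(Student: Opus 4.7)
The plan is to study the map $\tau:\{1,\ldots,s\}\to\{1,\ldots,r\}$ induced between the two chains by $\Speca({\tt j})$ and show it is a strictly increasing bijection with $\tau(1)=1$ and $\tau(s)=r$. Since $\Speca({\tt j})$ is continuous and order-preserving (it sends specializations to specializations), the image of the chain $\gtq_1\subsetneq\cdots\subsetneq\gtq_s$ under $\Speca({\tt j})$ is a chain in $\Speca(M)$ containing $\gtp_1=\Speca({\tt j})(\gtq_1)$, which is therefore a sub-chain of the complete chain $\{\gtp_1,\ldots,\gtp_r\}$ above $\gtp_1$. This produces a unique non-decreasing $\tau$ with $\tau(1)=1$ and $\Speca({\tt j})(\gtq_j)=\gtp_{\tau(j)}$.

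For strict monotonicity of $\tau$ (giving $s\le r$) I would exploit the fact that both ${\mathcal S}^*(M)$ and ${\mathcal S}^*(N)$ are real closed rings, so the localization of each at any prime is a real closed valuation domain. By \ref{spectracomp} the unique maximal ideal $\gtm^*$ of ${\mathcal S}^*(M)$ sitting above $\gtp_1$ is singled out; the analogous statement for ${\mathcal S}^*(N)$ gives a unique maximal ideal above $\gtq_1$, which must then be $\gtq_s$. The induced inclusion ${\mathcal S}^*(M)_{\gtm^*}\hookrightarrow{\mathcal S}^*(N)_{\gtq_s}$ is an inclusion of valuation domains with coinciding residue structure (because $N$ is dense in $M$, so modulo matching minimal primes the two rings share the same semialgebraic function field). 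Distinct primes of the larger valuation ring then contract to distinct primes of the smaller, forcing $\gtp_{\tau(j)}\subsetneq\gtp_{\tau(j+1)}$ whenever $\gtq_j\subsetneq\gtq_{j+1}$.

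For $\tau(s)=r$ I would then check that $\gtq_s\cap{\mathcal S}^*(M)=\gtm^*$. The inclusion $\gtq_s\cap{\mathcal S}^*(M)\subseteq\gtm^*$ is immediate from the uniqueness in \ref{spectracomp}. The reverse inclusion would follow by using the Gelfand--Kolmogorov type correspondence mentioned in the introduction: $\gtm^*$ corresponds to an ultrafilter of closed semialgebraic subsets of $M$, and because $N$ is dense in $M$ while $\gtq_s$ is the unique maximal ideal of ${\mathcal S}^*(N)$ above $\gtq_1$ (which contracts to $\gtp_1$), the traces on $N$ of neighborhoods of $\gtm^*$ must realize the ultrafilter associated to $\gtq_s$; hence every $f\in\gtm^*$ restricts into $\gtq_s$. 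Combined with strict monotonicity and $\tau(1)=1$, this gives $s\ge r$, and hence $s=r$.

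The main obstacle is the equality $\tau(s)=r$, i.e. the compatibility between the ``boundary-ultrafilters'' in $M$ and in $N$ at the maximal level. It is here that the hypotheses ${\tt d}_M(\gtm)=d-1$ and the purity of dimension of $M$ enter in an essential way: the first forces $\gtm$ (and hence $\gtp_1$) to sit at the top of the dimension tower, so no unexpected specialization layers emerge when passing from ${\mathcal S}^*(M)$ to ${\mathcal S}^*(N)$, and the second guarantees that the local irreducible components of $M$ and $N$ near the point encoded by $\gtm^*$ match up faithfully, preventing the chain length in ${\mathcal S}^*(N)$ from differing from that in ${\mathcal S}^*(M)$.
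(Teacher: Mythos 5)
There are two genuine gaps, and together they sink the proposal. The first is the claim that $\tau$ is strictly increasing, i.e.\ that distinct primes $\gtq_j\subsetneq\gtq_{j+1}$ in the chain over $\gtq_1$ contract to distinct primes of ${\mathcal S}^*(M)$. This is precisely the hard content of the lemma, and your justification does not work: ${\mathcal S}^*(M)_{\gtm^*}$ and ${\mathcal S}^*(N)_{\gtq_s}$ are not valuation domains (they are local rings with, in general, several minimal primes), and even for an honest extension of valuation rings the contraction of primes need not be injective unless the larger ring is a localization of the smaller at a prime, which is far from the case here since ${\mathcal S}^*(N)$ is much larger than ${\mathcal S}^*(M)$. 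More tellingly, your argument for this step makes no use of the hypothesis ${\tt d}_M(\gtm)=d-1$, yet without a hypothesis of this kind the assertion is false: Example \ref{excrucial}(v) exhibits a dense semialgebraic embedding ${\tt j}'$ for which the fiber of the single prime $\gtq_{m-\ell}'$ contains the whole subchain $\gtp_{m-\ell}'\subsetneq\cdots\subsetneq\gtp_m'$. Any correct proof of strict monotonicity must therefore consume the dimension hypothesis at exactly this point; the paper does so by a geometric argument with no counterpart in your proposal (semialgebraic trivialization of $\pi^{-1}(Y)\to Y$ over the complement of a subset $V$ of dimension $\le d-2$, which ${\tt d}_M(\gtm)=d-1$ allows one to discard, followed by restriction to a single sheet of the resulting finite covering, which is semialgebraically homeomorphic to a stratum of $Y$; over that sheet the two chains correspond bijectively).

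The second gap is logical: even granting everything you claim, a strictly increasing $\tau$ with $\tau(1)=1$ and $\tau(s)=r$ yields only $s\le r$, not $s\ge r$ as you assert at the end. The inequality $s\ge r$ requires surjectivity of $\tau$, i.e.\ that every $\gtp_i$ is the contraction of some prime containing $\gtq_1$; this is a going-up statement (available as Theorem \ref{minq}(iv) for suitably arranged tuples) that you never invoke. Your step $\tau(s)=r$ is essentially correct, but it needs none of the ultrafilter discussion: $\gtq_s$ is the unique maximal ideal over $\gtq_1$, and $\betaa\,{\tt j}$ sends maximal ideals to maximal ideals by \ref{spectracomp}, so $\Speca({\tt j})(\gtq_s)$ is a maximal ideal of ${\mathcal S}^*(M)$ containing $\gtp_1$ and hence equals $\gtm^*=\gtp_r$.
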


\begin{lem}\label{fiberspec3}
Let $\gtp\in\cl_{\Speca(M)}(Y)$ be a prime ideal of ${\mathcal S}^*(M)$ that contains only one minimal prime ideal $\gta$ of ${\mathcal S}^*(M)$ and satisfies $\gtp\cap{\mathcal W}_M=\varnothing$ and ${\tt d}_M(\gtp{\mathcal S}(M))=d-1$. Then the fiber $\Speca({\tt j})^{-1}(\gtp)$ is a singleton. 
\end{lem}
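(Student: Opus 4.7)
The plan is to treat existence and uniqueness separately. The non-emptiness of $\Speca({\tt j})^{-1}(\gtp)$ follows immediately from the surjectivity of $\Speca({\tt j})$ proved in Theorem~\ref{main}(i), since $N$ is dense in $M$. Thus the entire content of the lemma lies in showing that the fiber has at most one point.

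For uniqueness, suppose that $\gtq_1,\gtq_2$ both lie in $\Speca({\tt j})^{-1}(\gtp)$. The first ingredient is the single-minimal-prime hypothesis: the unique minimal prime $\gta$ of ${\mathcal S}^*(M)$ contained in $\gtp$ corresponds, via the standard description of $\betaa M$, to a single irreducible component of a semialgebraic compactification of $M$. Every minimal prime of ${\mathcal S}^*(N)$ contained in $\gtq_1$ or $\gtq_2$ must contract to $\gta$, and because $N$ is open and dense in $M$ (the tuple is suitably arranged and $M$ is pure dimensional of dimension $d$), the minimal primes of ${\mathcal S}^*(N)$ contracting to $\gta$ are in a controlled correspondence with those of ${\mathcal S}^*(M)$ contracting to $\gta$. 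This pins down the minimal-prime data of both $\gtq_i$.

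The second ingredient is the codimension-one geometry furnished by ${\tt d}_M(\gtp{\mathcal S}(M))=d-1$. Since $\gtp\cap{\mathcal W}_M=\varnothing$, I can lift a witness of the dimension equality to a bounded semialgebraic function $h\in{\mathcal S}^*(M)\cap\gtp$ whose zero set $Z(h)\subset M$ has dimension exactly $d-1$. Because $M$ is pure dimensional of dimension $d$ and we are restricted to a single irreducible component by the single-minimal-prime assumption, the complement $M\setminus Z(h)$ is locally the disjoint union of finitely many $d$-dimensional connected pieces. A prime in the fiber $\Speca({\tt j})^{-1}(\gtp)$ is determined by which of these pieces ``it sees'', and the assumption on $\gta$ forces a unique such piece; intersecting with $N$ (open in $M$) preserves this uniqueness and distinguishes the target of a single specialization in $\Speca(N)$.

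Combining these ingredients, both $\gtq_1$ and $\gtq_2$ are forced to coincide with the prime of ${\mathcal S}^*(N)$ attached to that unique specialization direction, yielding $\gtq_1=\gtq_2$. The main technical obstacle is translating the geometric picture above into precise algebraic statements in the non-noetherian rings ${\mathcal S}^*(M)$ and ${\mathcal S}^*(N)$. To do this rigorously I expect to rely on Theorem~\ref{main}(ii) to transport closures of semialgebraic sets through $\Speca({\tt j})$, on Lemma~\ref{pmchain} to match the chain structures above $\gtp$ and above the $\gtq_i$, and on the threshold formalism~(\ref{gtQ}) to confirm that $\widehat{\gtp}=\gtp$ under our hypotheses so that the dimension condition bites directly on $\gtp$ rather than on some strictly smaller prime. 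The delicate part will be ruling out specializations in $\Speca(N)$ that are invisible on $M$ yet compatible with $\gtp$; this is precisely where the codimension-one constraint ${\tt d}_M(\gtp{\mathcal S}(M))=d-1$, together with the single-minimal-prime hypothesis, must be exploited simultaneously.
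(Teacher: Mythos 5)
Your proposal is a geometric heuristic rather than a proof, and the key algebraic argument of the paper is absent. The paper's proof does not invoke Lemma~\ref{pmchain} at all (that lemma is used \emph{alongside} Lemma~\ref{fiberspec3} later, in \S\ref{fiberspec3b}, to extend a singleton fiber at the threshold $\widehat{\gtp}$ to a singleton fiber at $\gtp$), nor does it invoke the threshold formalism~\eqref{gtQ}; the hypotheses of Lemma~\ref{fiberspec3} are designed so that the argument bites on $\gtp$ directly. The actual engine of the proof consists of three steps you do not supply. First, one shows that $\gtP:=\gtp{\mathcal S}(M)$ is a $z$-ideal: since $\gtp$ contains a unique minimal prime, $\gtP$ contains a unique minimal prime $\gtA$, which is a $z$-ideal of depth $d$ (by Theorem~\ref{minimalnlc} and pure dimensionality); by \ref{sd}(i)--(ii) the prime $z$-ideal $\gtP^z$ above $\gtP$ with the same depth $d-1$ admits no prime strictly between $\gtA$ and itself, forcing $\gtP=\gtP^z$. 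Second, one uses \ref{sd}(iii) and the identification ${\mathcal S}(M)={\mathcal S}^*(M)_{{\mathcal W}_M}$ to compute $\tr\deg_\R(\qf({\mathcal S}^*(M)/\gtp))=d-1$, and deduces that any $\gtq$ in the fiber must also have $\tr\deg_\R(\qf({\mathcal S}^*(N)/\gtq))=d-1$ (it cannot be $d$, for then $\gtq$ and hence $\gtp$ would be minimal). Third, if $\gtq_1\neq\gtq_2$ both lie in the fiber, Theorem~\ref{minq}(i) forces their minimal primes to coincide, whence by \ref{chainp} they are comparable, say $\gtq_1\subsetneq\gtq_2$; passing to a brimming semialgebraic compactification $(X,{\tt k})$ of $N$ (using \ref{longitud} and \ref{brimming}) and applying \ref{sd}(ii) in the locally compact ring ${\mathcal S}(X)$ yields $d-1={\tt d}_X(\gtq_1\cap{\mathcal S}(X))>{\tt d}_X(\gtq_2\cap{\mathcal S}(X))=d-1$, a contradiction.

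Your picture of ``finitely many $d$-dimensional pieces'' and ``which piece the prime sees'' misses the real obstruction: two distinct primes in the fiber could \emph{a priori} both lie over the same piece but differ as points of the Zariski spectrum, i.e.\ sit at different heights in a chain. The uniqueness is not a matter of choosing a piece but of a depth/transcendence-degree rigidity, and that is precisely why the $z$-ideal property of $\gtP$ and the transcendence-degree equalities are indispensable. Your proposal explicitly concedes that ``translating the geometric picture into precise algebraic statements'' remains to be done and then lists tools (Lemma~\ref{pmchain}, the threshold) that do not in fact produce the required uniqueness; as written the argument does not establish the lemma.
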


\subsection{Structure of the article} 
In Section \ref{s2} we present the preliminary results used in Section \ref{s4} to prove Lemma \ref{main0} and Theorems \ref{main} and \ref{main2}. The reading can be started directly in Section \ref{s3} and referred to the Preliminaries only when needed. In Section \ref{s3} we provide some Examples \ref{excrucial} to illustrate some of the results stated in the Introduction. Next, we reduce the computation of the size of the fibers of spectral maps induced by semialgebraic embeddings to the case of a pure dimensional suitably arranged {\tt sa}-tuple in Section \ref{s4}. The specific computation of the size of those fibers is the aim of Theorem \ref{fiberspec}. The proof of this result and those of Lemmas \ref{pmchain} and \ref{fiberspec3} are conducted in Section \ref{s5}.

\subsection*{Acknowledgements}
The author is strongly indebted to the anonymous referees for their careful reading and comments that have improved the exposition and clarified some imprecise arguments. He is very grateful to Prof. Gamboa for helpful discussions and subtle comments during the preparation of this paper. The author is also indebted to S. Schramm for a careful reading of the final version and for the suggestions to refine its redaction.

\section{Preliminaries on semialgebraic sets and functions}\label{s2}
\renewcommand{\thethm}{\thesection.\arabic{thm}}
\renewcommand{\theequation}{\thesection.\arabic{equation}}

In the following $M\subset\R^m$ denotes a semialgebraic set. For each function $f\in{\mathcal S}^{\diam}(M)$ and each semialgebraic subset $S\subset M$ we denote $Z_S(f):=\{x\in S:\, f(x)=0\}$ and $D_S(f):=S\setminus Z_S(f)$. If $S=M$, we say that $Z(f):=Z_M(f)$ is the \em zero set \em of $f$ and we write $D(f):=D_M(f)$. Sometimes it will be useful to assume that the semialgebraic set $M$ we are working with is bounded. Such assumption can be done without loss of generality because the semialgebraic homeomorphism
$$
{\tt h}:\{x\in\R^m:\ \|x\|<1\}\to\R^m,\ x\mapsto\frac{x}{\sqrt{1-\|x\|^2}}
$$
induces a ring isomorphism ${\mathcal S}(M)\to {\mathcal S}({\tt h}^{-1}(M)),\ f\mapsto f\circ{\tt h}$ that maps ${\mathcal S}^\diam(M)$ onto ${\mathcal S}^\diam({\tt h}^{-1}(M))$.

A crucial fact when dealing with ${\mathcal S}^{\diam}(M)$ is that every closed semialgebraic subset $Z$ of $M$ is the zero-set $Z(h)$ of the (bounded) semialgebraic function $h:=\min\{1,\dist(\cdot,Z)\}$ on $M$. We will use that the difference $\cl_{\R^m}(S)\setminus S$ has by \cite[2.8.13]{bcr} dimension strictly smaller than $S$ for each semialgebraic set $S\subset\R^m$.

\subsection{Bricks of a semialgebraic set}\label{bricks}
Recall the following decomposition of $M$ as an irredundant finite union of closed pure dimensional semialgebraic subsets of $M$ as well as some of its main properties. \em There exists a unique finite family $\{M_1,\ldots,M_r\}$ of semialgebraic subsets of $M$ satisfying the following properties: 
\begin{itemize}
\item[(i)] Each $M_i$ is the closure in $M$ of the set of points of $M$ whose local dimension is equal to some fixed value. In particular, $M_i$ is pure dimensional and closed in $M$. 
\item[(ii)] $M=\bigcup_{i=1}^rM_i$.
\item[(iii)] $M_i\setminus\bigcup_{j\neq i}M_j$ is dense in $M_i$.
\item[(iv)] $\dim(M_i)>\dim(M_{i+1})$ for $i=1,\ldots,r-1$. In particular, $\dim(M_1)=\dim(M)$.
\end{itemize}\em

We call the sets $M_i$ the \em bricks \em of $M$ and denote the \em family of bricks of $M$ \em with $\bs_M:=\{\bs_i(M):=M_i\}_{i=1}^r$. Moreover, \em if $N\subset M$ is a dense semialgebraic subset of $M$, the families $\bs_N$ and $\bs_M$ of bricks of $N$ and $M$ satisfy the following relations:
\begin{itemize}
\item[(1)] $\bs_M:=\{\bs_i(M)=\cl_M(\bs_i(N))\}_i$,
\item[(2)] $\bs_N:=\{\bs_i(N)=\bs_i(M)\cap N\}_i$.
\end{itemize}\em

\subsection{Locally closed semialgebraic sets}\label{1B}
Local closedness has been revealed as an important property for the validity of results that are in the core of semialgebraic geometry. This property is the key assumption to guarantee a Hilbert's Nullstellensatz for the ring ${\mathcal S}(M)$ and consequently to assure that the radical ideals of ${\mathcal S}(M)$ coincide with the zero ideals of ${\mathcal S}(M)$ (commonly named as $z$-ideals). The presence of non units with empty zero set in ${\mathcal S}^*(M)$ requires a more sophisticated Nullstellensatz for this ring \cite{fg1}. Locally closed semialgebraic subsets of $\R^n$ coincide with locally compact ones because the sets $\cl_{\R^m}(M)$ and $U:=\R^m\setminus(\cl_{\R^m}(M)\setminus M)$ are semialgebraic. If $M$ is locally compact, $U$ is open in $\R^m$ and $M$ is the intersection of a closed and an open semialgebraic subset of $\R^m$. Let us recall some of the main properties of the largest locally compact and dense subset $M_{\lc}$ of a semialgebraic set $M$. Its construction is the main goal of \cite[9.14-9.21]{dk2}.

\begin{prop}\label{rho} 
Define $\rho_0(M):=\cl_{\R^m}(M)\setminus M$ and 
$$
\rho_1(M):=\rho_0(\rho_0(M))=\cl_{\R^m}(\rho_0(M))\cap M. 
$$
Then the semialgebraic set $M_{\lc}:=M\setminus\rho_1(M)=\cl_{\R^m}(M)\setminus\cl_{\R^m}(\rho_0(M))$ is the largest locally compact and dense subset of $M$ and coincides with the set of points of $M$ that have a compact neighborhood in $M$.
\end{prop}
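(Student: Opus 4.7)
The plan is to verify the displayed set identities and semi-algebraicity first, then to give a geometric characterization of $M_{\lc}$ via the compact-neighborhood property (from which local compactness is immediate), and finally to handle density and maximality as separate points.

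For the identities, observe that a point of $\rho_0(\rho_0(M))=\cl_{\R^m}(\rho_0(M))\setminus\rho_0(M)$ lies in $\cl_{\R^m}(M)$ but outside $\rho_0(M)=\cl_{\R^m}(M)\setminus M$, hence in $M$; the reverse inclusion is clear, giving $\rho_0(\rho_0(M))=\cl_{\R^m}(\rho_0(M))\cap M$. The alternative formula for $M_{\lc}$ then follows by writing $M=\cl_{\R^m}(M)\setminus\rho_0(M)$ and removing $\cl_{\R^m}(\rho_0(M))\supset\rho_0(M)$. Semi-algebraicity is automatic, since closures and set-theoretic differences of semialgebraic sets are semialgebraic.

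Next I would prove: $p\in M_{\lc}$ if and only if $p\in M$ admits a compact neighborhood in $M$. For the direct implication, pick an open ball $B\ni p$ with $B\cap\cl_{\R^m}(\rho_0(M))=\varnothing$; then $B\cap\cl_{\R^m}(M)\subset M$, and for a strictly smaller closed ball $\overline{B'}\subset B$, the set $\overline{B'}\cap M=\overline{B'}\cap\cl_{\R^m}(M)$ is a compact $M$-neighborhood of $p$. Conversely, if $p$ lies in an $M$-open set $B\cap M$ contained in a compact $M$-neighborhood $K$, with $B$ open in $\R^m$, then any hypothetical $x\in B\cap\rho_0(M)$ would be a limit of points of $B\cap M\subset K$, forcing $x\in K\subset M$, contradicting $x\in\rho_0(M)$; shrinking $B$ if necessary then gives $p\notin\cl_{\R^m}(\rho_0(M))$, i.e., $p\in M_{\lc}$. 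Local compactness of $M_{\lc}$ follows immediately.

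For density, I would apply the dimension inequality $\dim(\cl_{\R^m}(S)\setminus S)<\dim(S)$ locally, using the brick decomposition from \ref{bricks}: at each $p\in M$ lying in a brick $\bs_i(M)$ of dimension $d_i$, a small enough open ball $B$ satisfies $\dim(B\cap\cl_{\R^m}(\rho_0(M)))<d_i=\dim_p(\bs_i(M))$, so $\rho_1(M)\cap B$ cannot cover $B\cap\bs_i(M)$ and $M_{\lc}$ meets every $M$-neighborhood of $p$. The main obstacle is propagating the global dimension inequality \cite[2.8.13]{bcr} to a brick-wise local statement; I would overcome it by intersecting $M$ with a small ball and invoking the global result for the restriction, so that the loss of dimension remains valid below the top dimension too. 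Finally, for maximality, let $N\subset M$ be locally compact (hence locally closed in $\R^m$, so $\rho_0(N)$ is closed) and dense in $M$; density gives $\cl_{\R^m}(N)=\cl_{\R^m}(M)$, so $\rho_0(M)\subset\rho_0(N)$, and taking closures yields $\cl_{\R^m}(\rho_0(M))\subset\rho_0(N)\subset\cl_{\R^m}(M)\setminus N$. Intersecting with $M$ gives $\rho_1(M)\cap N=\varnothing$, hence $N\subset M\setminus\rho_1(M)=M_{\lc}$.
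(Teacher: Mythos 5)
The paper does not prove this proposition; it cites Delfs--Knebusch \cite[9.14--9.21]{dk2} for the construction of $M_{\lc}$, so your argument is a self-contained substitute rather than a reproof to compare line by line. Overall it is correct: the set-theoretic identities, the two-way compact-neighborhood characterization, the resulting local compactness (the compact $M$-neighborhood you build in the forward direction already lies in $M_{\lc}$, or more simply $M_{\lc}=\cl_{\R^m}(M)\setminus\cl_{\R^m}(\rho_0(M))$ is visibly locally closed), and the maximality argument via $\cl_{\R^m}(N)=\cl_{\R^m}(M)$, $\rho_0(N)$ closed, and hence $\cl_{\R^m}(\rho_0(M))\subset\rho_0(N)$ disjoint from $N$, are all sound.

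One inequality in the density step is misstated, though the conclusion survives. Intersecting with a small open ball $B\ni p$ and applying \cite[2.8.13]{bcr} to $M\cap B$ (using that $\rho_0(M)\cap B=\rho_0(M\cap B)\cap B$ for $B$ open) yields $\dim(\cl_{\R^m}(\rho_0(M))\cap B)<\dim(M\cap B)=\dim_p(M)$, not the stronger bound $<d_i$ for an arbitrary brick $\bs_i(M)$ through $p$. If $p$ also lies in a brick of dimension $d_i<\dim_p(M)$ the stated bound can fail: for the union in $\R^3$ of an open half-plane, an open half-line meeting it only in a common frontier point, and that point itself, the set $\cl_{\R^m}(\rho_0(M))$ is a full line through $p$ (so of dimension $1$ near $p$), while $p$ sits in a brick of dimension $d_i=1$. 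Your argument goes through unchanged if you take $\bs_i(M)$ to be the brick of dimension exactly $\dim_p(M)$ (which always contains $p$), or, more simply, if you skip bricks altogether and compare $\dim(\rho_1(M)\cap B)<\dim_p(M)=\dim(M\cap B)$ to conclude $M\cap B\not\subset\rho_1(M)$. With this small correction the proof of the proposition is complete.
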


\begin{cor}\label{rho2}
Suppose that $\rho_1(M)\neq\varnothing$. Then the local dimension $\dim_p(M)\geq 2$ and $\dim_p(\rho_1(M))\leq\dim_p(M)-2$ for each point $p\in\rho_1(M)$. 
\end{cor}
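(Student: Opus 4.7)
The plan is to derive both inequalities from a \emph{local} version of the dimension-drop fact \cite[2.8.13]{bcr}, applied iteratively once to $M$ and once to $\rho_0(M)$.

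First I would establish the following local claim: for any semialgebraic set $S\subset\R^m$ and any point $q\in\cl_{\R^m}(\rho_0(S))$, one has $\dim_q(\rho_0(S))\leq\dim_q(S)-1$. The justification rests on the fact that for an open ball $U=B(q,\veps)$, a standard check with convergent sequences gives $\cl_{\R^m}(S)\cap U=\cl_{\R^m}(S\cap U)\cap U$, whence
\[
\rho_0(S)\cap U=(\cl_{\R^m}(S)\setminus S)\cap U=(\cl_{\R^m}(S\cap U)\setminus (S\cap U))\cap U\subset\rho_0(S\cap U).
\]
Choose $\veps>0$ small enough that $\dim(S\cap U)=\dim_q(S)$ and $\dim(\rho_0(S)\cap U)=\dim_q(\rho_0(S))$. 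Applying \cite[2.8.13]{bcr} to the semialgebraic set $S\cap U$ yields $\dim(\rho_0(S\cap U))<\dim(S\cap U)$, which combined with the inclusion above delivers the local inequality.

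Now fix $p\in\rho_1(M)$. Since $\rho_1(M)=\cl_{\R^m}(\rho_0(M))\cap M$, we have $p\in\cl_{\R^m}(\rho_0(M))$, so the local claim applied to $S=M$ gives
\[
\dim_p(\rho_0(M))\leq\dim_p(M)-1.
\]
On the other hand, $p\in\rho_1(M)=\rho_0(\rho_0(M))\subset\cl_{\R^m}(\rho_0(\rho_0(M)))$, so the local claim applied to $S=\rho_0(M)$ yields
\[
\dim_p(\rho_1(M))=\dim_p(\rho_0(\rho_0(M)))\leq\dim_p(\rho_0(M))-1.
\]
Chaining the two inequalities produces $\dim_p(\rho_1(M))\leq\dim_p(M)-2$, which is the second assertion. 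For the first assertion, since $p\in\rho_1(M)$ we have $\dim_p(\rho_1(M))\geq 0$, so $\dim_p(M)\geq 2$.

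The only delicate point I expect is the local comparison of closures used to pass from \cite[2.8.13]{bcr}—a global statement—to the pointwise inequality $\dim_p(\rho_0(S))\leq\dim_p(S)-1$; once that local version is in hand the two applications are routine. Care must be taken only to ensure that the ball $U$ is chosen small enough that both $\dim(S\cap U)$ and $\dim(\rho_0(S)\cap U)$ have stabilized to their local values, which is standard for semialgebraic sets.
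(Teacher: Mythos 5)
Your proof is correct and follows essentially the same route as the paper: localize to a small ball, use the inclusion $\rho_0(S)\cap U\subset\rho_0(S\cap U)$, and apply the dimension drop of \cite[2.8.13]{bcr} twice. The only (harmless) difference is that the paper proves $\dim_p(M)\geq 2$ separately by contradiction (if $\dim_p(M)\leq 1$ then $\rho_0(M\cap U)$ is finite, hence closed, so $\rho_1(M\cap U)=\varnothing$), whereas you deduce it directly from the chained inequality; both are valid.
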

\begin{proof}[\sl Proof]
Let $p\in\rho_1(M)$ and suppose by contradiction that $\dim_p(M)\leq 1$. Let $U$ be an open neighborhood of $p$ in $\R^m$ such that $d:=\dim(M\cap U)=\dim_p(M)\leq 1$. As $\rho_0(M\cap U)=\cl_{\R^m}(M\cap U)\setminus (M\cap U)$ has dimension $\leq d-1\leq 0$, it is either empty or a finite set. Hence, $\rho_0(M\cap U)$ is a closed set in $\R^m$. Therefore
$$
p\in\rho_1(M)\cap U=\rho_1(M\cap U)=\cl_{\R^m}(\rho_0(M\cap U))\setminus \rho_0(M\cap U)=\varnothing,
$$
which is a contradiction. Thus, $\dim_p(M)\geq 2$. Let $V$ be an open neighborhood of $p$ in $\R^m$ such that $\dim(M\cap V)=\dim_p(M)$ and $\dim(\rho_1(M)\cap V)=\dim_p(\rho_1(M))$. Then
\begin{multline*}
\dim_p(\rho_1(M))=\dim(\rho_1(M)\cap V)=\dim(\rho_1(M\cap V))\\
=\dim(\rho_0(\rho_0(M\cap V)))\leq\dim(M\cap V)-2=\dim_p(M)-2,
\end{multline*}
as wanted.
\end{proof}

\subsection{Zariski and maximal spectra of rings of semialgebraic functions.}\label{zar}\setcounter{paragraph}{0}
We summarize some results concerning the Zariski and maximal spectra of rings of semialgebraic and bounded semialgebraic functions on a semialgebraic set \cite[\S3-\S6]{fg3}. 

The \em Zariski spectrum \em $\Specd(M):=\Spec({\mathcal S}^{\diam}(M))$ of ${\mathcal S}^{\diam}(M)$ is the collection of all prime ideals of ${\mathcal S}^{\diam}(M)$ endowed with the Zariski topology, which has the family of sets $\Dd_{\Specd(M)}(f):=\{\gtp\in\Specd(M):\,f\not\in\gtp\}$ as a basis of open sets and where $f\in{\mathcal S}^{\diam}(M)$. We write $\Zz_{\Specd(M)}(f):=\Specd(M)\setminus\Dd_{\Specd(M)}(f)$. 

We denote the maximal ideal of all functions in ${\mathcal S}^{\diam}(M)$ vanishing at a point $p\in M$ with $\gtmd_p$. If $M$ is endowed with the Euclidean topology, the map $\phi:M\to\Specd(M),\ p\mapsto\gtmd_p$ is an embedding, so we identify $M$ with $\phi(M)$. Those maximal ideals of ${\mathcal S}^{\diam}(M)$, which are not of the form $\gtmd_p$, are called \em free \em and $\partial M:=\betaa M\setminus M$ is the set of all free maximal ideals of ${\mathcal S}^*(M)$.

\paragraph{}\label{closure}
Each semialgebraic map ${\tt h}:M_1\to M_2$ induces a homomorphism 
$$
{\tt h}^{\diam,\ast}:\mathcal{S}^{\diam}(M_2)\to\mathcal{S}^{\diam}(M_1),\ f\mapsto f\circ{\tt h}.
$$ 
The map $\Specd({\tt h}):\Specd(M_1)\to\Specd(M_2),\gtp\to({\tt h}^{\diam,\ast})^{-1}(\gtp)$ is the unique continuous extension of ${\tt h}$ to $\Specd(M_1)$. The operator $\Specd$ behaves in the expected functorial way. Let us recall some of its immediate properties \cite[4.3-6]{fg3}. Let $C,C_1,C_2,N\subset M$ be semialgebraic sets such that $C,C_1,C_2$ are closed in $M$. Then
\begin{itemize}
\item[(i)] A prime ideal $\gtp\in\Specd(M)$ belongs to $\cl_{\Specd(M)}(N)$ if and only if it contains the kernel of the restriction homomorphism $\phi:{\mathcal S}^{\diam}(M)\to{\mathcal S}^{\diam}(N),\ f\to f|_N$. If $\gtp$ is in addition a $z$-ideal, it is enough to determine if there exists $g\in\gtp$ such that $Z(g)=Y$.
\item[(ii)] $\Specd(C)\cong\cl_{\Specd(M)}(C)$ via $\Specd({\tt j})$ where ${\tt j}:C\hookrightarrow M$ is the inclusion map.
\item[(iii)] $\cl_{\Specd(M)}(C_1\cap C_2)=\cl_{\Specd(M)}(C_1)\cap\cl_{\Specd(M)}(C_2)$.
\item[(iv)] If $M_1,\ldots,M_k$ are the connected components of $M$, their closures $\cl_{\Specd(M)}(M_i)\cong\Specd(M_i)$ are the connected components of $\Specd(M)$.
\end{itemize}

Next we summarize some results obtained in \cite[\S4-5]{fg3} that will be crucial for our purposes. Let us denote the set of minimal prime ideals of ${\mathcal S}^{\diam}(M)$ with ${\rm Min}({\mathcal S}^{\diam}(M))$. 

\begin{thm}\label{mins}
Let $(M,N,Y,{\tt j},{\tt i})$ be a suitably arranged {\tt sa}-tuple and let ${\mathcal L}_Y:=\{\gtp\in\Specs(M):\ \exists\,f\in\gtp,\ Z(f)=Y\}$. Then the map $\Specs({\tt j}):\Specs(N)\to\Specs(M)\setminus{\mathcal L}_Y$ is a homeomorphism whose inverse map is $\Specs({\tt j})^{-1}:\Specs(M)\setminus{\mathcal L}_Y\to\Specs(N),\ \gtp\mapsto\gtp{\mathcal S}(N)$.
\end{thm}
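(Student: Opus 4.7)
Since the tuple is suitably arranged, $N$ is open in $M$ and $Y=M\setminus N$ is closed in $M$. Pick the bounded semialgebraic function
$$
h:=\min\{1,\dist(\cdot,Y)\}\in{\mathcal S}^*(M)
$$
for which $Z(h)=Y$. My plan is to identify ${\mathcal L}_Y$ with the principal closed set $\Zz_{\Specs(M)}(h)$ and to recognize ${\mathcal S}(N)$ as a localization of ${\mathcal S}(M)$, after which the result becomes standard localization theory.

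The first step is the identification ${\mathcal L}_Y=\{\gtp\in\Specs(M):h\in\gtp\}$. The inclusion $\supset$ is immediate from $Z(h)=Y$. For $\subset$, take $f\in\gtp$ with $Z(f)=Y$. By the \L ojasiewicz inequality for continuous semialgebraic functions on a (w.l.o.g.\ bounded, using the homeomorphism ${\tt h}$ recalled at the beginning of Section \ref{s2}) semialgebraic set, the equality of zero sets $Z(f)=Z(h)$ yields $k\in\N$ and $u\in{\mathcal S}(M)$ with $h^k=f\cdot u$. Since $f\in\gtp$ we get $h^k\in\gtp$ and, $\gtp$ being prime, $h\in\gtp$.

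The second and most technical step is the identification
$$
{\mathcal S}(N)\cong{\mathcal S}(M)\bigl[\tfrac{1}{h}\bigr].
$$
The evaluation morphism $f/h^k\mapsto (f|_N)/(h|_N)^k$ is well defined because $h|_N$ is nowhere zero and hence a unit in ${\mathcal S}(N)$. Injectivity: if $f|_N=0$ in ${\mathcal S}(N)$, density of $N$ in $M$ and continuity force $f=0$ in ${\mathcal S}(M)$. Surjectivity is the core point: given $g\in{\mathcal S}(N)$, one needs $k\in\N$ with $h^k\cdot g$ extending continuously by $0$ across $Y$ to a function in ${\mathcal S}(M)$. This is precisely a \L ojasiewicz-type estimate, comparing $|g|$ to a negative power of $h=\dist(\cdot,Y)$ on $N$, applied to the graph of $g$ as a semialgebraic subset of $\R^{m+1}$. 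This is exactly the statement that makes ${\mathcal S}$ behave like a sheaf for the inclusion $N\hookrightarrow M$ in the locally closed situation; it is the main technical hurdle, though it is essentially a known extension lemma for continuous semialgebraic functions across a closed semialgebraic subset.

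With these two ingredients, the conclusion follows from the standard behavior of Zariski spectra under localization. The natural map
$$
\Spec\bigl({\mathcal S}(M)[\tfrac{1}{h}]\bigr)\longrightarrow\Spec({\mathcal S}(M))
$$
is a homeomorphism onto the basic open $\Dd_{\Specs(M)}(h)=\Specs(M)\setminus\Zz_{\Specs(M)}(h)=\Specs(M)\setminus{\mathcal L}_Y$ (by Step 1), and the inverse is precisely $\gtp\mapsto\gtp\cdot{\mathcal S}(M)[1/h]$. Transporting this through the isomorphism of Step 2 yields that $\Specs({\tt j}):\Specs(N)\to\Specs(M)\setminus{\mathcal L}_Y$ is a homeomorphism whose inverse is $\gtp\mapsto\gtp{\mathcal S}(N)$, as asserted. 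To double-check the inverse formula directly: for $\gtp\not\in{\mathcal L}_Y$ and $g=f/h^k\in{\mathcal S}(N)$, one has $g\in\gtp{\mathcal S}(N)$ iff $f\in\gtp$ (since $h\not\in\gtp$), which gives both the well-definedness of $\gtp{\mathcal S}(N)\in\Specs(N)$ and the continuity of the inverse, the preimage of a basic open $\Dd_{\Specs(N)}(g)$ being $\Dd_{\Specs(M)}(f)\cap(\Specs(M)\setminus{\mathcal L}_Y)$.
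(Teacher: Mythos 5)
Your overall strategy---realize ${\mathcal S}(N)$ as the localization of ${\mathcal S}(M)$ at a single element $h$ and then quote the standard description of the spectrum of a localization---is sound and is essentially the route behind this result (compare Remark \ref{mins-r}). However, your choice of $h$ is wrong, and with it both Step 1 and Step 2 fail. The function must vanish on all of $\cl_{\R^m}(M)\setminus N$, not merely on $Y=M\setminus N$: one has to take $h$ to be the restriction to $M$ of a semialgebraic function on the compact set $\cl_{\R^m}(M)$ whose zero set is $\cl_{\R^m}(M)\setminus N$ (a closed set, since $N$, being locally compact and dense, is open in $\cl_{\R^m}(M)$). The point is that in a suitably arranged {\tt sa}-tuple only $N$ is assumed locally compact; $M$ itself need not be. Hence the \L ojasiewicz inequality you invoke on $M$ in Step 1 (boundedness is not enough --- \cite[2.6.4]{bcr} needs a closed and bounded, or at least locally closed, set), and the estimate $|g|\leq C\,\dist(\cdot,Y)^{-\alpha}$ you need for surjectivity in Step 2, are simply not available. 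This is precisely the subtlety about local closedness emphasized in \S\ref{1B}.

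Concretely, take $M:=\{(x,y)\in[0,1]^2:\ y>0\}\cup\{(0,0)\}$, $N:=\{(x,y)\in[0,1]^2:\ y>0\}$, $Y:=\{(0,0)\}$; this is a suitably arranged {\tt sa}-tuple with $M$ not locally compact at the origin. Then $f:=y\in{\mathcal S}(M)$ has $Z(f)=Y$, while your $h=\min\{1,\dist(\cdot,Y)\}=\min\{1,\sqrt{x^2+y^2}\}$. Along the arcs $y=x^{k+1}$ one sees $h^k/f\sim 1/x\to\infty$, so no power of $h$ lies in $(f)$; hence $h\notin\sqrt{(f)}$ and there is a prime ideal $\gtp$ with $f\in\gtp$ and $h\notin\gtp$, i.e.\ $\gtp\in{\mathcal L}_Y\setminus\Zz_{\Specs(M)}(h)$, refuting Step 1. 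The same arcs show that $g:=1/y\in{\mathcal S}(N)$ is not of the form $u/h^k$ with $u\in{\mathcal S}(M)$, refuting the surjectivity claim in Step 2. Note that your argument would then wrongly place this $\gtp$ in the image of $\Specs({\tt j})$, which is impossible because $y|_N$ is a unit of ${\mathcal S}(N)$. With the corrected $h$ (equal to $y$ near the origin in this example) everything you wrote does go through: the \L ojasiewicz and extension arguments then take place on the compact set $\cl_{\R^m}(M)$ with $N$ open in it, which is exactly the setting used in Remark \ref{mins-r}.
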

\begin{remark}\label{mins-r}
Let $h\in{\mathcal S}(\cl_{\R^m}(M))$ be such that $Z(h)=\cl_{\R^m}(M)\setminus N$. Then ${\mathcal L}_Y=\Zz_{\Specs(M)}(h)$.

Indeed, the inclusion $\Zz_{\Specs(M)}(h)\subset{\mathcal L}_Y$ is clear, so we only prove the converse one. Let $\gtp\in{\mathcal L}_Y$ and $f\in\gtp$ be such that $Z(f)=Y$. By \cite[2.6.4]{bcr} there exist an integer $k\geq1$ and $g\in{\mathcal S}(\cl_{\R^m}(M))$ such that $g|_N=\frac{h^k}{f}$ and $g|_{\cl_{\R^m}(M)\setminus N}=0$. Thus, $h^k=gf\in\gtp$, so $h\in\gtp$. Therefore $\gtp\in\Zz_{\Specs(M)}(h)$, as required.
\end{remark}

\begin{thm}\label{minq}
Let $(M,N,Y,{\tt j},{\tt i})$ be a suitably arranged {\tt sa}-tuple. Let $\gtp$ be a prime ideal of ${\mathcal S}^*(M)$ and denote $Z:=\cl_{\Speca(M)}(Y)$. We have
\begin{itemize}
\item[(i)] If $\gtp$ is a minimal prime ideal of ${\mathcal S}^*(M)$, then $\gtp\not\in Z$ and $\Speca({\tt j})^{-1}(\gtp)=\{\gtq\}$ where $\gtq:=\gtp{\mathcal S}(N)\cap{\mathcal S}^*(N)$ is a minimal prime ideal of ${\mathcal S}^*(N)$. 
\item[(ii)] If $\gtp\not\in Z$ and $\gtp_0\subset\gtp$ is a minimal prime ideal of ${\mathcal S}^*(M)$, the fiber $\Speca({\tt j})^{-1}(\gtp)$ is a singleton and its unique element is $\sqrt{\gtp{\mathcal S}^*(N)+\gtp_0{\mathcal S}(N)\cap{\mathcal S}^*(N)}$. 

\item[(iii)] $\Speca({\tt j}):\Speca(N)\to\Speca(M)$ is surjective and the restriction map
$$
\Speca({\tt j})|:\Speca(N)\setminus\Speca({\tt j})^{-1}(Z)\to\Speca(M)\setminus Z
$$ 
is a homeomorphism. In particular, the restriction map
$$
\Speca({\tt j})|:{\rm Min}({\mathcal S}^*(N))\to{\rm Min}({\mathcal S}^*(M))
$$ 
is also a homeomorphism.
\item[(iv)] The homomorphism ${\mathcal S}^*(M)\hookrightarrow{\mathcal S}^*(N),\ f\mapsto f|_N$ enjoys the going up property.
 \end{itemize}
\end{thm}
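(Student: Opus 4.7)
The plan is to bootstrap Theorem~\ref{mins} --- the analogous statement for the unbounded rings ${\mathcal S}(\cdot)$ --- via the localization relation ${\mathcal S}(M)={\mathcal S}^*(M)_{{\mathcal W}_M}$. First I would identify $Z$ explicitly: since $Y$ is closed in $M$, the function $h:=\min\{1,\dist(\cdot,Y)\}$ lies in ${\mathcal S}^*(M)$ with $Z(h)=Y$, and I claim $Z=\Zz_{\Speca(M)}(h)$. The inclusion $\subset$ is immediate because $\Zz(h)$ is closed and contains $Y$; for the reverse, I would adapt the Lojasiewicz/division argument used in the proof of Remark~\ref{mins-r} to the bounded setting, producing, for each $f\in{\mathcal S}^*(M)$ with $Z(f)\supset Y$, an integer $k\geq1$ and $g\in{\mathcal S}^*(M)$ with $h^k=fg$, whence any prime containing $h$ contains every such $f$ and therefore lies in $\cl_{\Speca(M)}(Y)$ by \ref{closure}(i).

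For (iii), given $\gtp\in\Speca(M)\setminus Z$ (i.e.\ $h\notin\gtp$), I would distinguish two cases. If $\gtp\cap{\mathcal W}_M=\varnothing$, then $\gtp{\mathcal S}(M)$ is a prime of ${\mathcal S}(M)$ lying outside ${\mathcal L}_Y$ by Remark~\ref{mins-r}; Theorem~\ref{mins} lifts it uniquely to $\gtp{\mathcal S}(N)\in\Specs(N)$, and its contraction to ${\mathcal S}^*(N)$ is the unique element of $\Speca({\tt j})^{-1}(\gtp)$. When $\gtp\cap{\mathcal W}_M\neq\varnothing$ I would instead pass to a semialgebraic compactification $(X,{\tt j})$ of $M$, using that ${\mathcal S}^*(M)$ is the direct limit of the ${\mathcal S}(X)$, apply Theorem~\ref{mins} to $\cl_X(N)\hookrightarrow X$ at each stage, and transfer the conclusion back. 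Continuity of the inverse map $\gtp\mapsto\gtp{\mathcal S}(N)\cap{\mathcal S}^*(N)$ is inherited from Theorem~\ref{mins} in both cases, and surjectivity of $\Speca({\tt j})$ follows by assembling them.

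For (i), let $\gtp\in{\rm Min}({\mathcal S}^*(M))$. If $h\in\gtp$, minimality would supply $g\in{\mathcal S}^*(M)\setminus\gtp$ with $gh=0$; then $g$ vanishes on the dense set $D(h)=N$ and hence on $M$ by continuity, contradicting $g\neq 0$. Thus $h\notin\gtp$, so $\gtp\notin Z$, and (iii) produces a singleton fiber $\{\gtq\}$. The same zero-divisor argument shows $\gtp\cap{\mathcal W}_M=\varnothing$, so the explicit identification $\gtq=\gtp{\mathcal S}(N)\cap{\mathcal S}^*(N)$ comes from tracing $\gtq$ through the localization square relating ${\mathcal S}^*(M)\hookrightarrow{\mathcal S}^*(N)$ with ${\mathcal S}(M)\hookrightarrow{\mathcal S}(N)$; minimality of $\gtq$ is inherited through the minimal-prime bijection of (iii). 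Part (ii) follows from (i) and (iii): the unique fiber element $\gtq$ over $\gtp\notin Z$ contains $\gtp{\mathcal S}^*(N)$ and the minimal prime $\gtp_0{\mathcal S}(N)\cap{\mathcal S}^*(N)$ corresponding to the given $\gtp_0\subset\gtp$, so contains the radical of their sum, while uniqueness forces equality. For (iv), given $\gtq_1$ above $\gtp_1\subset\gtp_2$, I would pick a minimal prime $\gta_1\subset\gtq_1$, observe via (i) that $\Speca({\tt j})(\gta_1)$ is a minimal prime contained in $\gtp_1\subset\gtp_2$, and lift the chain through the surjection of (iii) combined with a chain-lifting argument to produce $\gtq_2\supset\gtq_1$ above $\gtp_2$.

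The principal technical obstacle I foresee is the case $\gtp\cap{\mathcal W}_M\neq\varnothing$ in (iii): there localization at ${\mathcal W}_M$ collapses $\gtp$ to the unit ideal, so Theorem~\ref{mins} cannot be invoked directly. The semialgebraic-compactification reduction --- together with careful control of how the closure of $Y$ behaves across compactifications and of the direct-limit structure of ${\mathcal S}^*(M)$ --- is the technical heart of the argument.
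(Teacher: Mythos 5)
First, a point of reference: the paper does not prove Theorem~\ref{minq} at all --- it is quoted verbatim from \cite[\S4--5]{fg3} as a known input. So there is no in-paper proof to compare yours with, and I can only assess your proposal on its own terms.

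The decisive gap is your opening step, the identification $Z=\cl_{\Speca(M)}(Y)=\Zz_{\Speca(M)}(h)$ for $h=\min\{1,\dist(\cdot,Y)\}$. By \ref{closure}(i), $\gtp\in Z$ means $\gtp$ contains \emph{every} $f\in{\mathcal S}^*(M)$ vanishing on $Y$, so the inclusion $\Zz_{\Speca(M)}(h)\subset Z$ requires each such $f$ to lie in $\sqrt{h\,{\mathcal S}^*(M)}$, i.e.\ a factorization $f^k=hg$; the factorization $h^k=fg$ you invoke (which is the one actually produced in Remark~\ref{mins-r}) only yields the inclusion you already called immediate. The needed \L ojasiewicz division fails because $M$ need not be locally compact along $\cl_{\R^m}(Y)\setminus Y$. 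Concretely, take $M=\bigl((0,1]\times[0,1]\bigr)\cup\bigl(\{0\}\times(0,1]\bigr)$, $N=(0,1]\times[0,1]$, $Y=\{0\}\times(0,1]$: this is a suitably arranged {\tt sa}-tuple, $h(x,y)=x$, and $f(x,y)=x/(x+y)$ is a bounded semialgebraic function vanishing on $Y$. The maximal ideal $\gtn^*:=\{g\in{\mathcal S}^*(M):\lim_{t\to0^+}g(t,0)=0\}$ contains $h$ but not $f$ (the limit of $f$ along $y=0$ equals $1$), so $\gtn^*\in\Zz_{\Speca(M)}(h)\setminus Z$. Your argument for (iii) therefore only treats primes outside the strictly larger closed set $\Zz_{\Speca(M)}(h)$ and says nothing about the fiber over $\gtn^*$, which the theorem asserts is a singleton. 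Locating the correct exceptional set is precisely where the content lies; note that Lemma~\ref{main0}(iii) and Theorem~\ref{main2} work instead with a function vanishing on the larger set $\cl_{\R^m}(\cl_{\R^m}(M)\setminus N)$ exactly to absorb this phenomenon.

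Two further gaps. Surjectivity of $\Speca({\tt j})$ must be established over all of $\Speca(M)$, in particular over $Z$ itself, and neither of your two cases addresses primes in $Z$; the lying-over/going-up property for ${\mathcal S}^*(M)\hookrightarrow{\mathcal S}^*(N)$ that would supply it is exactly statement (iv), which your sketch in turn derives from the surjectivity --- this is circular as written. And the case $\gtp\cap{\mathcal W}_M\neq\varnothing$, which you rightly identify as the technical heart, remains a plan rather than an argument: controlling the fibers through the direct system of semialgebraic compactifications of $N$, and matching the condition $\gtq\cap{\mathcal S}^*(M)=\gtp$ across the limit, is where the actual work of \cite{fg3} lies.
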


\paragraph{}\label{inequality}
It is well-known that ${\mathcal S}(M)={\mathcal S}^*(M)_{{\mathcal W}_M}$ where ${\mathcal W}_M$ is the multiplicative set of those functions $f\in{\mathcal S}^*(M)$ such that $Z(f)=\varnothing$ because each $f\in{\mathcal S}(M)$ can be written as $f=\frac{\frac{f}{1+|f|}}{\frac{1}{1+|f|}}$. Denote the set of prime ideals of ${\mathcal S}^*(M)$ that do not intersect ${\mathcal W}_M$ with ${\mathfrak S}(M)$. The Zariski spectrum of ${\mathcal S}(M)$ is homeomorphic to ${\mathfrak S}(M)$ via the homeomorphisms ${\mathfrak i}_M:\,\Specs(M)\to{\mathfrak S}(M),\ \gtp\mapsto\gtp\cap{\mathcal S}^*(M)$ and ${\mathfrak i}_M^{-1}:\,{\mathfrak S}(M)\to\Specs(M),\ \gtq\mapsto\gtq{\mathcal S}(M)$. The previous homeomorphism ${\mathfrak i}_M$ maps ${\rm Min}({\mathcal S}(M))$ (bijectively) onto ${\rm Min}({\mathcal S}^*(M))$ (see \cite[4.3]{fe1}).

\paragraph{}\label{zideal3}
An ideal $\gta$ of ${\mathcal S}(M)$ is a \em $z$-ideal \em if every $g\in{\mathcal S}(M)$ satisfying $Z(f)\subset Z(g)$ for some $f\in\gta$ belongs to $\gta$. Each $z$-ideal is a radical ideal because $Z(f)=Z(f^k)$ for each $f\in{\mathcal S}(M)$ and each $k\geq 1$. The operator $\Specs$ preserves prime $z$-ideals: \em if ${\tt h}:M_1\to M_2$ is a semialgebraic map, $\Specs({\tt h})(\gtp)$ is a prime $z$-ideal of ${\mathcal S}(M_2)$ for each prime $z$-ideal $\gtp$ of ${\mathcal S}(M_1)$\em. 

Two relevant examples of $z$-ideals of ${\mathcal S}(M)$ are maximal and minimal prime ideals \cite[4.7, 4.14]{fe1}. Minimal prime ideals have been characterized geometrically in \cite[4.1]{fe1} as follows.

\begin{thm}[Minimal prime ideals]\label{minimalnlc}
Let $\gtp$ be a prime ideal of ${\mathcal S}^{\diam}(M)$. Then $\gtp$ is a minimal prime ideal of ${\mathcal S}^{\diam}(M)$ if and only if the zero set of each $f\in\gtp$ has a non-empty interior in $M$. 
\end{thm}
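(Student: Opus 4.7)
My plan for the forward direction is to invoke the standard characterization of minimal prime ideals in a reduced ring: since $\mathcal{S}^{\diam}(M)$ embeds into the $\R$-algebra of real-valued functions on $M$ and is therefore reduced, a prime $\gtp$ is minimal if and only if for every $f\in\gtp$ there exists $g\in\mathcal{S}^{\diam}(M)\setminus\gtp$ with $fg=0$ (equivalently, the localization $\mathcal{S}^{\diam}(M)_{\gtp}$ is a field). When $\gtp$ is minimal, take such a pair $(f,g)$: the equality $fg=0$ translates geometrically into $D(g)\subset Z(f)$, and since $g\notin\gtp$ forces $g\ne 0$, the set $D(g)$ is a non-empty open subset of $M$. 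Hence $Z(f)$ has non-empty interior.

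For the reverse implication I would argue by contradiction. Suppose every element of $\gtp$ has zero set of non-empty $M$-interior but $\gtp$ is not minimal. Then there is a prime $\gtq\subsetneq\gtp$, and picking any $f\in\gtp\setminus\gtq$ the set $V:=\Int_M(Z(f))$ is non-empty. Define
\[
g_0 := \min\{1,\dist(\cdot,M\setminus V)\}\in\mathcal{S}^*(M)\subset\mathcal{S}^{\diam}(M),
\]
which satisfies $Z(g_0)=M\setminus V$ and $D(g_0)=V$, so $fg_0=0\in\gtq$ and hence $g_0\in\gtq\subset\gtp$. To derive a contradiction I would now exhibit an element of $\gtp$ whose zero set has empty $M$-interior. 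Since $|f|^2=f^2\in\gtp$ and $\gtp$ is radical, $|f|=\sqrt{f^2}\in\mathcal{S}^{\diam}(M)$ also belongs to $\gtp$, so $h:=|f|+g_0\in\gtp$. Both summands being non-negative,
\[
Z(h) = Z(|f|)\cap Z(g_0) = Z(f)\setminus V.
\]
Any $M$-open subset $U$ of $Z(h)$ is contained in $Z(f)$ and therefore in $\Int_M(Z(f))=V$; but $U\subset M\setminus V$, forcing $U=\varnothing$. Thus $\Int_M(Z(h))=\varnothing$, contradicting the hypothesis on $\gtp$.

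The main obstacle I anticipate lies in this last step of the converse. The natural annihilator $g_0$ of $f$ is forced by primeness of $\gtq$ into $\gtq$ and hence into $\gtp$ itself, so $g_0$ by itself cannot serve as a witness against minimality of $\gtp$. The trick I rely on is to replace $f$ by its non-negative avatar $|f|\in\gtp$ and form $h=|f|+g_0\in\gtp$, whose zero set collapses onto the frontier $Z(f)\setminus V$ of $Z(f)$ in $M$; by the very definition of $V$ as the $M$-interior of $Z(f)$, this frontier cannot contain any $M$-open subset, producing the desired contradiction.
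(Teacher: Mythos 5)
Your proof is correct. Note that the paper does not prove this statement itself: Theorem \ref{minimalnlc} is quoted from \cite[4.1]{fe1}, so there is no in-paper argument to compare against; your proposal is a sound, self-contained substitute. The forward direction is the standard fact that in a reduced ring a prime $\gtp$ is minimal iff $\gtp\,{\mathcal S}^{\diam}(M)_\gtp=0$, i.e.\ every $f\in\gtp$ is annihilated by some $g\notin\gtp$, and then $\varnothing\neq D(g)\subset Z(f)$. In the converse, you correctly identify and resolve the only delicate point: the natural annihilator $g_0$ of $f$ (with $Z(g_0)=M\setminus\Int_M(Z(f))$, legitimate since $\Int_M(Z(f))$ is semialgebraic and its complement is closed in $M$) is forced into $\gtq$ by primality, so it cannot directly witness non-minimality; passing to $h=|f|+g_0\in\gtp$ (using $|f|\in\gtp$ by primality from $|f|^2=f^2\in\gtp$) yields $Z(h)=Z(f)\setminus\Int_M(Z(f))$, which has empty interior by definition of the interior, contradicting the hypothesis. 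All steps, including $Z(|f|+g_0)=Z(f)\cap Z(g_0)$ for non-negative summands and the membership of $g_0$ in ${\mathcal S}^*(M)$, are valid.
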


\paragraph{Maximal spectra.}\label{spectracomp}
Denote the collection of all maximal ideals of ${\mathcal S}^{\diam}(M)$ with $\betad M$ and consider in $\betad M$ the topology induced by the Zariski topology of $\Specd(M)$. Given $f\in{\mathcal S}^{\diam}(M)$, we denote
$$
{\mathcal D}_{\betad M}(f):=\Dd_{\Specd(M)}(f)\cap\betad M\quad\text{and}\quad{\mathcal Z}_{\betad M}(f):=\betad M\setminus{\mathcal D}_{\betad M}(f)=\Zz_{\Specd(M)}(f)\cap\betad M.
$$
As for rings of continuous functions \cite[\S7]{gj}, the maximal spectra $\betas M$ and $\betaa M$ of ${\mathcal S}(M)$ and ${\mathcal S}^*(M)$ are homeomorphic (\cite[\S10]{t1}, \cite[3.5]{fg5}). The map $\Phi:\betas M\to \betaa M,\ \gtm\mapsto \gtm^*$, which maps each maximal ideal $\gtm$ of ${\mathcal S}(M)$ to the unique maximal ideal $\gtm^*$ of ${\mathcal S}^*(M)$ that contains $\gtm \cap{\mathcal S}^*(M)$, is a homeomorphism. In particular, $\Phi(\gtm_p)=\gtm_p^*$ for all $p\in M$. We denote the maximal ideals of ${\mathcal S}^*(M)$ with $\gtm^*$ and the unique maximal ideal $\gtn$ of ${\mathcal S}(M)$ such that $\gtn\cap{\mathcal S}^*(M)\subset \gtm^*$ with $\gtm$.

If ${\tt h}:M_1\to M_2$ is a semialgebraic map, $\Speca({\tt h}):\Speca(M_1)\to\Speca(M_2)$ maps $\betaa M_1$ into $\betaa M_2$ by \cite[5.9]{fg3}. We denote the restriction of $\Speca({\tt h})$ to $\betaa M_1$ with $\betaa {\tt h}:\betaa M_1\to\betaa M_2$.

\paragraph{}\label{chainp}
It is well-known that \em the set of prime ideals of ${\mathcal S}^{\diamond}(M)$ containing a prime ideal $\gtp$ form a chain\em. In \cite[2.11 \& 5.1-2]{fe1} we study the behavior of those chains of prime ideals in ${\mathcal S}^*(M)$ that do not admit a refinement. \em Let $\gtp_0\subsetneq\cdots\subsetneq\gtp_r=\gtm^*$ be a non-refinable chain of prime ideals in the ring ${\mathcal S}^*(M)$. We have:
\begin{itemize}
\item[(i)] There exists $0\leq k\leq r$ such that $\gtp_k=\gtm\cap{\mathcal S}^*(M)$ where $\gtm$ is the unique maximal ideal of ${\mathcal S}(M)$ such that $\gtm\cap{\mathcal S}^*(M)\subset\gtm^*$. In particular, $\gtp_\ell\cap{\mathcal W}_M=\varnothing$ if and only if $\ell\leq k$.
\item[(ii)] The subchain $\gtp_k=\gtm\cap{\mathcal S}^*(M)\subsetneq\cdots\subsetneq\gtp_r=\gtm^*$ is the same for every non-refinable chain of prime ideals in ${\mathcal S}^*(M)$ ending at $\gtm^*$. 
\item[(iii)] If $C$ is a closed semialgebraic subset of $M$ and $\gtp_j\in\cl_{\Speca(M)}(C)$ for $j=0,\ldots,r$, then the maximal ideal $\gtm\in\cl_{\Specs(M)}(C)$ and $\gtp_k=\gtm\cap{\mathcal S}^*(M)\in\cl_{\Speca(M)}(C)$.
\end{itemize}
\em

\paragraph{Semialgebraic depth}\label{sd}
The \em semialgebraic depth \em of a prime ideal $\gtp$ of ${\mathcal S}(M)$ is 
$$
{\tt d}_M(\gtp):=\min\{\dim(Z(f)):\,f\in\gtp\}. 
$$
Some remarkable properties of this invariant collected in \cite{fg2} and \cite[\S4]{fe1} are the following: 
\begin{itemize}
\em\item[(i)] Let $\gtp$ be a prime ideal of ${\mathcal S}(M)$. Then there exists a unique prime $z$-ideal $\gtp^z$ of ${\mathcal S}(M)$ such that $\gtp\subset\gtp^z$ and ${\tt d}_M(\gtp)={\tt d}_M(\gtp^z)$.
\item[(ii)] Let $\gtp,\gtq$ be two prime $z$-ideals of ${\mathcal S}(M)$ such that $\gtq\subsetneq\gtp$. Then ${\tt d}_M(\gtp)<{\tt d}_M(\gtq)$. If additionally ${\tt d}_M(\gtp)={\tt d}_M(\gtq)+1$, there exists no prime ideal between $\gtp$ and $\gtq$.
\item[(iii)] If $\gtp$ is a prime $z$-ideal, then ${\tt d}_M(\gtp)=\tr\deg_\R(\qf({\mathcal S}(M)/\gtp))$.
\end{itemize}

\subsection{Semialgebraic compactifications of a semialgebraic set}\label{mpsc}\setcounter{paragraph}{0}

A \em semialgebraic compactification of a semialgebraic set $M\subset\R^m$ \em is a pair $(X,{\tt k})$ constituted of a compact semialgebraic set $X\subset\R^n$ and a semialgebraic embedding ${\tt k}:M\hookrightarrow X$ whose image is dense in $X$. Of course, it holds ${\mathcal S}(X)={\mathcal S}^*(X)$. The following properties shown in \cite[\S1]{fg2} are decisive.

\paragraph{}\label{a}$\hspace{-1.5mm}$
{\em
For each finite family $\Ff:=\{f_1,\ldots,f_r\}\subset{\mathcal S}^*(M)$ there exist a semialgebraic compacti\-fication $(X,{\tt k}_{\Ff})$ of $M$ and semialgebraic functions $F_1,\ldots,F_r\in{\mathcal S}(X)$ such that $f_i=F_i\circ{\tt k}_{\Ff}$. 
\em}

\vspace{2mm}
Indeed, we may assume that $M$ is bounded. Now consider $X:=\cl({\rm graph}(f_1,\ldots,f_r))$, ${\tt k}_\Ff:M\hookrightarrow X,\ x\mapsto(x,f_1(x),\ldots,f_r(x))$ and $F_i:=\pi_{m+i}|_{X}$ where $\pi_{m+i}:\R^{m+r}\to\R,\ x:=(x_1,\ldots,x_{m+r})\mapsto x_{m+i}$ for $i=1,\ldots,r$. 

\paragraph{}\label{longitud}$\hspace{-1.5mm}$
\em
Given a chain of prime ideals $\gtp_0\subsetneq\cdots\subsetneq \gtp_r$ of ${\mathcal S}^*(M)$, there exists a semialgebraic compactification $(X,{\tt k})$ of $M$ such that the prime ideals $\gtq_i:=\gtp_i\cap{\mathcal S}(X)$ constitute a chain $\gtq_0\subsetneq\cdots\subsetneq\gtq_r$ in ${\mathcal S}(X)$. 
\em

\vspace{2mm}
Indeed, it is enough to pick $f_i\in\gtp_i\setminus \gtp_{i-1}$ for $1\leq i \leq r$ and to consider the semialgebraic compactification of $M$ provided for the family $\Ff:=\{f_1,\ldots,f_r\}$ by \ref{a}.

\paragraph{}\label{remainder}
Let ${\mathfrak F}_M$ be the collection of all semialgebraic compactifications of $M$. Given two of them $(X_1,{\tt k}_1)$ and $(X_2,{\tt k}_2)$, we say that $(X_1,{\tt k}_1)\preccurlyeq(X_2,{\rm j}_2)$ if there exists a (unique) continuous (surjective) map $\rho:=\rho_{X_1,X_2}:X_2\to X_1$ such that $\rho\circ{\tt k}_2={\tt k}_1$; the uniqueness of $\rho$ follows because $\rho|_{{\tt k}_2(M)}={\tt k}_1\circ({\tt k}_2|_M)^{-1}$ and ${\tt k}_2(M)$ is dense in $X_2$. It holds: \em $\rho^{-1}(X_1\setminus {\tt k}_1(M))=X_2\setminus {\tt k}_2(M)$ and $\rho(X_2\setminus {\tt k}_2(M))=X_1\setminus {\tt k}_1(M)$\em.
\begin{proof}[\sl Proof]
Let us see $X_2\setminus {\tt k}_2(M)\subset\rho^{-1}(X_1\setminus {\tt k}_1(M))$ first. Let $x_2\in X_2\setminus{\tt k}_2(M)$. Since ${\tt k}_2(M)$ is dense in $X_2$, by the curve selection lemma \cite[2.5.5]{bcr} there exists a semialgebraic path $\alpha:[0,1]\to\R^m$ such that $\alpha((0,1])\subset M$ and ${\tt k}_2(\alpha(0))=x_2$. Note that $\rho(x_2)=\rho({\tt k}_2(\alpha(0)))=\lim_{\t\to0^+}{\tt k}_1(\alpha(\t))$. If this point occurs in ${\tt k}_1(M)$, then $\alpha(0)\in M$, so $x_2={\tt k}_2(\alpha(0))\in {\tt k}_2(M)$, which is a contradiction. Conversely, suppose there exists $x_2\in\rho^{-1}(X_1\setminus {\tt k}_1(M))\cap {\tt k}_2(M)$. Then $\rho(x_2)\not\in {\tt k}_1(M)$, but $x_2={\tt k}_2(y)$ for some $y\in M$. This implies $\rho(x_2)=\rho({\tt k}_2(y))={\tt k}_1(y)\in {\tt k}_1(M)$, which is a contradiction. Finally, since $\rho$ is surjective and $\rho^{-1}(X_1\setminus {\tt k}_1(M))=X_2\setminus {\tt k}_2(M)$, we conclude $\rho(X_2\setminus {\tt k}_2(M))=X_1\setminus {\tt k}_1(M)$.
\end{proof} 
 
\paragraph{}\label{directed}$\hspace{-1.5mm}$
\em $({\mathfrak F}_M,\preccurlyeq)$ is an up-directed set \em and we have a collection of rings $\{{\mathcal S}(X)\}_{(X,{\tt k})\in{\mathfrak F}_M}$ and $\R$-mono\-mor\-phisms $\rho_{X_1,X_2}^*:{\mathcal S}(X_1)\to{\mathcal S}(X_2),\ f\mapsto f\circ\rho_{X_1,X_2}$ for $(X_1,{\tt k}_1)\preccurlyeq(X_2,{\tt k}_2)$ such that 
\begin{itemize}
\item $\rho_{X_1,X_1}^*=\id$ and
\item $\rho_{X_1,X_3}^*=\rho_{X_2,X_3}^*\circ\rho_{X_1,X_2}^*$ if $(X_1,{\tt k}_1)\preccurlyeq(X_2,{\tt k}_2)\preccurlyeq(X_3,{\tt k}_3)$.
\end{itemize}

We conclude: \em The ring ${\mathcal S}^*(M)$ is the direct limit of the up-directed system $\qq{{\mathcal S}(X),\rho_{X_1,X_2}^*}$ together with the homomorphisms ${\tt k}^*:{\mathcal S}(X)\hookrightarrow{\mathcal S}^*(M)$ where $(X,{\tt k})\in{\mathfrak F}_M$\em. 

\paragraph{}\label{brimming}$\hspace{-1.5mm}$ \em Let $\gtp$ be a prime ideal of ${\mathcal S}^{\diam}(M)$. Then there exists \em by \cite[\S2]{fg2} \em a semialgebraic compactification $(X,{\tt k})$ of $M$ such that 
$$
\qf({\mathcal S}(X)/(\gtp\cap{\mathcal S}(X)))=\qf({\mathcal S}^{\diam}(M)/\gtp).
$$
\em We refer to $(X,{\tt k})$ as a \em brimming semialgebraic compactification of $M$ for $\gtp$\em. Of course, if $\gtp_1,\ldots,\gtp_r$ are finitely many prime ideals of ${\mathcal S}^{\diam}(M)$, there exists by \ref{directed} a (common) brimming semialgebraic compactification $(X,{\tt k})$ of $M$ for $\gtp_1,\ldots,\gtp_r$, that is, $\qf({\mathcal S}^{\diam}(M)/\gtp_i)=\qf({\mathcal S}(X)/(\gtp_i\cap{\mathcal S}(X)))$ for $i=1,\ldots,r$.

\subsection{Separation of prime $z$-ideals}\label{s2e} 
We finish this section showing how the prime $z$-ideals of ${\mathcal S}(M)$ admit a nice behavior with respect to `separation'.

\begin{lem}\label{intcomp0}
Let $\gtp_1,\gtp_2$ be prime $z$-ideals of ${\mathcal S}(M)$ such that $\gtp_i\not\subset\gtp_j$ if $i\neq j$ and let $g\in\gtp_1\cap\gtp_2$. Then there exist $f_i\in\gtp_i\setminus\gtp_j$ for $i\neq j$ such that
\begin{itemize} 
\item[(i)] $Z(f_i)$ is pure dimensional, $Z(f_i)\subset Z(g)$ and $\dim(Z(f_i))={\tt d}_M(\gtp_i)$.
\item[(ii)] $\dim(Z(f_1^2+f_2^2))<\min\{{\tt d}_M(\gtp_1),{\tt d}_M(\gtp_2)\}$.
\end{itemize}
\end{lem}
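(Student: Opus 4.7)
My plan is to think of the two prime $z$-ideals through their associated $z$-filters $\mathcal{Z}(\gtp_i):=\{Z(h):h\in\gtp_i\}$, which consist of closed semialgebraic subsets of $M$; recall that every closed semialgebraic $C\subset M$ is the zero set of $\min\{1,\dist(\cdot,C)\}$. Three facts will drive everything: \emph{(a)} primeness gives $C_1\cup C_2\in\mathcal{Z}(\gtp_i)\Rightarrow C_1\in\mathcal{Z}(\gtp_i)$ or $C_2\in\mathcal{Z}(\gtp_i)$; \emph{(b)} the $z$-ideal property gives upward closure, namely $C\subset Z(f)$ with $C\in\mathcal{Z}(\gtp_i)$ implies $f\in\gtp_i$; and \emph{(c)} whenever $Z\in\mathcal{Z}(\gtp_i)$ has $\dim Z={\tt d}_M(\gtp_i)$, its top brick (see \ref{bricks}) is in $\mathcal{Z}(\gtp_i)$, because the remaining bricks have strictly smaller dimension and would violate the depth.

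Assume without loss of generality $d_1:={\tt d}_M(\gtp_1)\leq d_2:={\tt d}_M(\gtp_2)$. First I would choose $u_i\in\gtp_i\setminus\gtp_j$ and $h_i\in\gtp_i$ with $\dim Z(h_i)=d_i$, and set $w_i:=h_i^2+g^2+u_i^2\in\gtp_i$. Then $Z(w_i)=Z(h_i)\cap Z(g)\cap Z(u_i)\subset Z(g)$ and $\dim Z(w_i)=d_i$ (upper bound from $Z(w_i)\subset Z(h_i)$, lower bound from $w_i\in\gtp_i$ and the depth). Let $B_i$ be the top brick of $Z(w_i)$: closed in $M$, pure of dimension $d_i$, contained in $Z(g)$, and by \emph{(c)} in $\mathcal{Z}(\gtp_i)$. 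Since $B_i\subset Z(u_i)$ with $u_i\notin\gtp_j$, \emph{(b)} forces $B_i\notin\mathcal{Z}(\gtp_j)$. Hence any $f_i^0$ with $Z(f_i^0)=B_i$ lies in $\gtp_i\setminus\gtp_j$, and $(f_1^0,f_2^0)$ already satisfies (i).

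Next I would refine $f_1$ to secure (ii). Set $D:=B_1\cap B_2$; if $\dim D<d_1$ then $(f_1^0,f_2^0)$ already works, so assume $\dim D=d_1$ and let $D_1$ be the top brick of $D$. Upward closure \emph{(b)} shows $D_1\subsetneq B_1$: otherwise $D_1=B_1\subset B_2=Z(f_2^0)$ would give $f_2^0\in\gtp_1$. Writing $B_1=D_1\cup\cl_M(B_1\setminus D_1)$ and applying \emph{(a)} together with $D_1\notin\mathcal{Z}(\gtp_1)$ (same upward-closure argument) and $B_1\in\mathcal{Z}(\gtp_1)$, I obtain $\cl_M(B_1\setminus D_1)\in\mathcal{Z}(\gtp_1)$, and its top brick $E_1$ lies in $\mathcal{Z}(\gtp_1)$ by \emph{(c)}. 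Finally $E_1\subset B_1\notin\mathcal{Z}(\gtp_2)$, so \emph{(b)} gives $E_1\notin\mathcal{Z}(\gtp_2)$; picking $f_1$ with $Z(f_1)=E_1$ and keeping $f_2:=f_2^0$ preserves (i). For (ii) I would estimate
$$
E_1\cap B_2\subset\cl_M(B_1\setminus D_1)\cap B_2\subset\bigl((B_1\cap B_2)\setminus D_1\bigr)\cup\bigl(\cl_M(B_1\setminus D_1)\setminus(B_1\setminus D_1)\bigr),
$$
observing that the first piece lies in the non-top bricks of $D$, hence has dimension $<d_1$, and the second has dimension $<d_1$ by \cite[2.8.13]{bcr}. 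Together they yield $\dim Z(f_1^2+f_2^2)=\dim(E_1\cap B_2)<d_1=\min(d_1,d_2)$.

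The hard part is the balance in the refinement step: shrinking $Z(f_1)$ from $B_1$ to $E_1$ to force a lower-dimensional intersection with $B_2$ could destroy membership in $\gtp_1\setminus\gtp_2$, and the whole argument depends on primeness of $\mathcal{Z}(\gtp_1)$ applied to the decomposition $B_1=D_1\cup\cl_M(B_1\setminus D_1)$, coupled with the depth bound, to force $E_1$ into $\mathcal{Z}(\gtp_1)\setminus\mathcal{Z}(\gtp_2)$.
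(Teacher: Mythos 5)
Your proof is correct, but it follows a genuinely different route from the paper's. The paper takes $g_i\in\gtp_i\setminus\gtp_j$ with $\dim Z(g_i)={\tt d}_M(\gtp_i)$ and $Z(g_i)\subset Z(g)$ and then defines, symmetrically for both indices, $Z(f_i):=\cl_M(Z(g_i)\setminus Z(g_j))$; membership $f_i\in\gtp_i$ comes from the single factorization $Z(g_i)\subset Z(f_ig_j)$ together with $g_j\notin\gtp_i$ and primeness, and the dimension bound comes from the observation that $Z(f_1)\cap Z(f_2)$ is trapped in the frontier $\cl_M(Z(g_1)\setminus Z(g_2))\setminus(Z(g_1)\setminus Z(g_2))$. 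Pure dimensionality is patched up only at the very end by passing to the top brick, which is exactly your fact \emph{(c)}. By contrast, you work in the $z$-filter language, extract pure dimensionality from the start (taking $B_i$ as the top brick of $Z(w_i)$), modify only the ideal of smaller depth, and remove from $B_1$ only the top brick $D_1$ of the intersection $B_1\cap B_2$ rather than all of $Z(g_2)$; the price is that your estimate has to track two pieces (the lower-dimensional bricks of $D$, plus the frontier of $B_1\setminus D_1$) instead of one, and you invoke primeness \emph{(a)} one extra time to show $\cl_M(B_1\setminus D_1)\in\mathcal{Z}(\gtp_1)$. The paper's construction is a bit more economical and symmetric in $i$; yours makes the role of the $z$-filter axioms more explicit and secures pure dimensionality as you go. Both are sound, and the only step worth stating a little more carefully in yours is the equality $B_1=D_1\cup\cl_M(B_1\setminus D_1)$ (it holds because $B_1$ is closed in $M$ and $D_1\subset B_1$), which is what licenses the appeal to \emph{(a)}.
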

\begin{proof}[\sl Proof]
Let $g_i\in\gtp_i\setminus\gtp_j$ for $i\neq j$. We may assume $\dim(Z(g_i))={\tt d}_M(\gtp_i)$ and $Z(g_i)\subset Z(g)$ by substituting $g_i$ with $g_i^2+a_i^2+g^2$ where $a_i\in\gtp_i$ and $\dim(Z(a_i))={\tt d}_M(\gtp_i)$.

Let $f_i\in{\mathcal S}(M)$ be such that $Z(f_i)=\cl_M(Z(g_i)\setminus Z(g_j))$ if $i\neq j$. As $Z(g_i)\subset Z(f_ig_j)$ and $\gtp_i$ is a prime $z$-ideal, $f_ig_j\in\gtp_i$ and since $g_j\in\gtp_j\setminus\gtp_i$, we deduce $f_i\in\gtp_i$. Notice
\begin{multline*}
Z(f_1)\cap Z(f_2)\subset Z(f_1)\cap Z(g_2)=\cl_M(Z(g_1)\setminus Z(g_2))\cap Z(g_2)\\
=(\cl_M(Z(g_1)\setminus Z(g_2))\setminus(Z(g_1)\setminus Z(g_2)))\cap Z(g_2);
\end{multline*}
hence, 
\begin{multline*}
\dim(Z(f_1)\cap Z(f_2))\leq\dim(\cl_M(Z(g_1)\setminus Z(g_2))\setminus(Z(g_1)\setminus Z(g_2)))\\
<\dim(Z(g_1)\setminus Z(g_2))\leq\dim(Z(g_1))={\tt d}_M(\gtp_1).
\end{multline*}
Analogously, $\dim(Z(f_1)\cap Z(f_2))<{\tt d}_M(\gtp_2)$, so 
$$
\dim(Z(f_1^2+f_2^2))<\min\{{\tt d}_M(\gtp_1),{\tt d}_M(\gtp_2)\}.
$$
Notice in addition that as $Z(f_i)\subset Z(g_i)$,
$$
{\tt d}_M(\gtp_i)\leq\dim(Z(f_i))\leq\dim(Z(g_i))={\tt d}_M(\gtp_i).
$$
To finish we may assume that $Z(f_i)$ is pure dimensional. To that end use the decomposition of $Z(f_i)$ as a union of (closed) bricks, the fact that each brick is the zero set of a semialgebraic function on $M$ and that $\gtp_i$ is a prime $z$-ideal.
\end{proof}

\begin{lem}\label{intcomp1}
Let $\gtp_1,\gtp_2,\gtq$ be prime $z$-ideals of ${\mathcal S}(M)$ such that $\gtp_i\subset\gtq$ and $\gtp_i\not\subset\gtp_j$ if $i\neq j$. Assume ${\tt d}_M(\gtp_i)={\tt d}_M(\gtq)+1$ and let $g\in\gtq$ be such that $\dim(Z(g))={\tt d}_M(\gtq)$. Then there exist $f_i\in\gtp_i\setminus\gtp_j$ if $i\neq j$ such that $Z(f_1^2+f_2^2)\subset Z(g)$ and $Z(f_i)$ is pure dimensional.
\end{lem}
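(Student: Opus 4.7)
The plan is to build $f_1, f_2$ in two stages: first produce a preliminary pair via Lemma \ref{intcomp0}, then perform a sum-of-squares surgery to force the intersection of their zero sets into $Z(g)$. Pick $h_i \in \gtp_i \setminus \gtp_j$ with $\dim Z(h_i) = {\tt d}_M(\gtp_i)$, set $g' := h_1^2 + h_2^2 \in \gtp_1 \cap \gtp_2$, and apply Lemma \ref{intcomp0} to the triple $(\gtp_1, \gtp_2, g')$. This yields $f_i^{(0)} \in \gtp_i \setminus \gtp_j$ with each $Z(f_i^{(0)})$ pure dimensional of dimension ${\tt d}_M(\gtp_i) = {\tt d}_M(\gtq)+1$ and
\[
\dim\bigl(Z(f_1^{(0)}) \cap Z(f_2^{(0)})\bigr) < {\tt d}_M(\gtp_i).
\]
If already $Z(f_1^{(0)}) \cap Z(f_2^{(0)}) \subset Z(g)$ we are done; otherwise the obstruction is the locally closed semialgebraic set $V_* := (Z(f_1^{(0)}) \cap Z(f_2^{(0)})) \setminus Z(g)$, of dimension at most ${\tt d}_M(\gtq)$.

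Next, I reduce to bounded $M$ via the introductory homeomorphism of \S\ref{s2}, so that $T := \cl_M(V_*)$ is compact. The crucial step is to produce $b_i \in \gtp_i$ with $V_* \subset D(b_1) \cup D(b_2)$. For each $p \in V_*$, since $p \notin Z(g)$ one has $g \notin \gtm_p$, hence $\gtq \not\subset \gtm_p$; the hypothesis ${\tt d}_M(\gtp_i) = {\tt d}_M(\gtq)+1$ forbids any prime ideal strictly between $\gtp_i$ and $\gtq$ (by \S\ref{sd}(ii)), and the prime $z$-ideal structure then implies that $p$ cannot lie in both common zero sets $V_i := \bigcap_{h \in \gtp_i} Z(h)$ simultaneously; hence there exist $i(p) \in \{1, 2\}$ and $b_{i(p)}^{(p)} \in \gtp_{i(p)}$ with $b_{i(p)}^{(p)}(p) \neq 0$. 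A compactness argument on $T$, together with $T \setminus V_* \subset Z(g)$, extracts a finite collection $\{b_i^{(k)}\}$; setting $b_i := \sum_k (b_i^{(k)})^2 \in \gtp_i$ gives $D(b_1) \cup D(b_2) \supset V_*$.

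Define $f_i' := (f_i^{(0)})^2 + b_i^2 \in \gtp_i$. As a sum of squares $Z(f_i') = Z(f_i^{(0)}) \cap Z(b_i)$, and the equivalence $a^2 + b^2 \in \gtp_j \iff a, b \in \gtp_j$ for the prime $z$-ideal $\gtp_j$ (which follows from the upward-closure of the associated $z$-filter) combined with $f_i^{(0)} \notin \gtp_j$ forces $f_i' \notin \gtp_j$. Then
\[
Z(f_1') \cap Z(f_2') = Z(f_1^{(0)}) \cap Z(f_2^{(0)}) \cap Z(b_1) \cap Z(b_2) \subset Z(g),
\]
since the part of $Z(f_1^{(0)}) \cap Z(f_2^{(0)})$ outside $Z(g)$ is exactly $V_*$, which is killed by $Z(b_1) \cap Z(b_2)$. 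To enforce pure dimensionality, I replace $f_i'$ by a function $f_i$ whose zero set is the top brick of $Z(f_i')$ (of dimension ${\tt d}_M(\gtp_i)$, see \S\ref{bricks}); the $z$-filter/brick argument used at the end of \ref{intcomp0}'s proof shows $f_i \in \gtp_i \setminus \gtp_j$, with the intersection only shrinking.

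The hard part will be establishing the geometric assertion in the second paragraph: that each $p \in V_*$ fails to lie in $V_1 \cap V_2$, equivalently that $V_1 \cap V_2 \subset Z(g)$. This requires exploiting the prime $z$-ideal hypotheses precisely—namely, using the no-prime-between condition to force $\gtq$ to be effectively the unique prime $z$-ideal of its depth enclosing $\gtp_1 + \gtp_2$ in the local structure at $\gtm_p$—which is where the structural assumptions of the lemma are indispensable, and where the most delicate use of the semialgebraic depth machinery of \S\ref{sd} will occur.
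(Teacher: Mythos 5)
Your pointwise claim is true, but not for the reason you give, and the passage from it to actual functions $b_1,b_2$ has a genuine gap. The pointwise fact: for $p\in V_*$, if $\gtp_1\subset\gtm_p$ then, since the primes containing $\gtp_1$ form a chain (\ref{chainp}) and both $\gtq$ and $\gtm_p$ lie in it, either $\gtq\subset\gtm_p$ or $\gtm_p=\gtq$, so $g(p)=0$ --- a contradiction. This already gives $V_1\cap V_*=\varnothing$ and uses only the chain property and $\gtp_1\subset\gtq$, not the depth hypothesis or \ref{sd}(ii). The serious gap is your compactness step. The sets $D(b_{i(p)}^{(p)})$ cover $V_*$, but $V_*=T\cap D(g)$ is \emph{open} in $T$, hence not compact, and nothing guarantees they cover the frontier $T\setminus V_*=T\cap Z(g)$; so no finite subcover can be extracted (and $T=\cl_M(V_*)$ itself need not be compact, since $M$ is only assumed bounded, not closed). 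In effect you have shown $\bigcap_{h\in\gtp_1,\,h'\in\gtp_2}Z(h)\cap Z(h')\subset Z(g)$, an infinite intersection of closed sets contained in a closed set. Compactness yields finite subintersections only when the total intersection is \emph{empty}, not when it is merely contained in another set --- think of $\bigcap_n[0,1+1/n]=[0,1]$. Since the $\gtp_i$ are not finitely generated, $V_1\cap V_2$ is not semialgebraic either, so no finiteness principle in this setting converts $V_1\cap V_2\subset Z(g)$ into a single pair $(b_1,b_2)$.

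The paper sidesteps this by a global construction rather than a local-to-global one. After replacing $g$ by $g^2+h_1^2+h_2^2$ to arrange $Z(g)\subset Z_1\cap Z_2$, set $C_1:=Z(g)$, $C_2:=\cl_M((Z_1\cap Z_2)\setminus C_1)$, choose a semialgebraic function $b$ on $M\setminus(C_1\cap C_2)$ with $b^{-1}(\{-1\})=C_1\setminus(C_1\cap C_2)$ and $b^{-1}(\{1\})=C_2\setminus(C_1\cap C_2)$, and form closed sets $T_1,T_2$ from the preimages of $(-\infty,0]$ and $[0,+\infty)$ whose union is all of $M$. With $b_k$ satisfying $Z(b_k)=T_k$ one gets $b_1b_2=0$, so primality of $\gtp_i$ forces $b_1\in\gtp_i$ or $b_2\in\gtp_i$; the dimension estimate $\dim Z(b_2^2+g^2)\le\dim(C_1\cap C_2)<{\tt d}_M(\gtq)$ shows $b_2\notin\gtq\supset\gtp_i$, hence $b_1\in\gtp_1\cap\gtp_2$. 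The crucial move --- exploiting $b_1b_2=0$ plus primality to turn a zero-set decomposition of $M$ into ideal membership --- obviates compactness entirely, and a \emph{single} auxiliary function $b_1$ does the job of both your $b_1$ and $b_2$. Your surgery $f_i:=h_i^2+b_1^2$ and the bricks argument for pure dimensionality then go through essentially as you wrote them, but the second paragraph of your proposal needs to be rebuilt along these lines.
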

\begin{proof}[\sl Proof]
The pure dimensionality of $Z(f_i)$ will be reached once the other requirements are fulfilled. To achieve it one proceeds similarly to the final part of the proof of the previous lemma. By Lemma \ref{intcomp0} there exist $h_i\in\gtp_i\setminus\gtp_j$ for $i\neq j$ such that $\dim(Z(h_i))={\tt d}_M(\gtp_i)$ and 
$$
\dim(Z(h_1^2+h_2^2))<\min\{{\tt d}_M(\gtp_1),{\tt d}_M(\gtp_2)\}={\tt d}_M(\gtq)+1.
$$ 
Thus, if $Z_i:=Z(h_i)$, we have ${\tt d}_M(\gtq)\leq\dim(Z_1\cap Z_2)\leq{\tt d}_M(\gtq)=\dim(Z(g))$. After substituting $g$ with $g^2+h_1^2+h_2^2$, we may assume $Z(g)\subset Z_1\cap Z_2$.

If $Z_1\cap Z_2\subset Z(g)$, it is enough to choose $f_i:=h_i$. Assume next $Z_1\cap Z_2\not\subset Z(g)$ and let $C_1:=Z(g)$ and $C_2:=\cl_M((Z_1\cap Z_2)\setminus C_1)$. Let $b\in{\mathcal S}(M\setminus(C_1\cap C_2))$ be such that $b^{-1}(\{-1\})=C_1\setminus(C_1\cap C_2)$ and $b^{-1}(\{1\})=C_2\setminus(C_1\cap C_2)$. Consider the closed semialgebraic subsets of $M$ 
$$
T_1:=\cl_M(b^{-1}((-\infty,0]))\quad\text{ and }\quad T_2:=b^{-1}([0,+\infty))\cup(C_1\cap C_2). 
$$
Let $b_i\in{\mathcal S}(M)$ be such that $Z(b_i)=T_i$. Note that
$$
C_1\cap C_2=C_1\cap\cl_M((Z_1\cap Z_2)\setminus C_1)=C_1\cap(\cl_M((Z_1\cap Z_2)\setminus C_1)\setminus((Z_1\cap Z_2)\setminus C_1)).
$$
Therefore 
\begin{multline}\label{ultr}
\dim(C_1\cap C_2)\leq\dim(\cl_M((Z_1\cap Z_2)\setminus C_1)\setminus((Z_1\cap Z_2)\setminus C_1))\\
<\dim((Z_1\cap Z_2)\setminus C_1)\leq\dim(Z_1\cap Z_2)={\tt d}_M(\gtq).
\end{multline}
Moreover, as 
$$
b^{-1}([0,+\infty))\cap Z(g)\subset b^{-1}([0,+\infty))\cap(b^{-1}(\{-1\})\cup(C_1\cap C_2))\subset C_1\cap C_2,
$$
we conclude
$$
Z(b_2^2+g^2)=T_2\cap Z(g)=(b^{-1}([0,+\infty))\cup(C_1\cap C_2))\cap Z(g)\subset C_1\cap C_2.
$$
Thus, $b_2^2+g^2\not\in\gtq$ because by \eqref{ultr} $\dim(Z(b_2^2+g^2))<{\tt d}_M(\gtq)$. Since $g\in\gtq$, we get $b_2\not\in\gtq$. As $b_1b_2=0$ and $\gtp_i\subset\gtq$, we deduce $b_1\in\gtp_i$. Therefore, $f_i:=h_i^2+b_1^2\in\gtp_i$ and since $b_1\in\gtp_i$ and $h_i\in\gtp_i\setminus\gtp_j$ if $i\neq j$, we have $f_i\in\gtp_i\setminus\gtp_j$ if $i\neq j$. To finish we show $Z(f_1^2+f_2^2)\subset Z(g)$.

Indeed, notice that
$$
(Z_1\cap Z_2)\setminus(C_1\cap C_2)\subset(C_1\cup C_2)\setminus(C_1\cap C_2)\subset b^{-1}(\{-1\})\cup b^{-1}(\{1\});
$$
hence,
$$
((Z_1\cap Z_2)\setminus(C_1\cap C_2))\cap(b^{-1}((-\infty,0])\subset b^{-1}(\{-1\})=C_1\setminus(C_1\cap C_2).
$$
Therefore
\begin{multline*}
Z(f_1^2+f_2^2)=Z(h_1)\cap Z(h_2)\cap Z(b_1)=Z_1\cap Z_2\cap\cl_M(b^{-1}((-\infty,0]))\\
\subset Z_1\cap Z_2\cap(b^{-1}((-\infty,0])\cup(C_1\cap C_2))\subset C_1=Z(g),
\end{multline*}
as required. 
\end{proof}

\begin{remark}
The previous lemma applies for instance if $\gtp_1,\gtp_2$ are minimal prime ideals contained in a prime $z$-ideal $\gtq$.
\end{remark}

\section{Examples}\label{s3}
\renewcommand{\theparagraph}{\thesection.\arabic{paragraph}}
\setcounter{secnumdepth}{4}

Before providing in the next section the rather technical proofs of Theorem \ref{fiberspec} and Lemmas \ref{pmchain} and \ref{fiberspec3} we present here some enlightening examples to illustrate them. We develop them in full detail for the sake of the reader and we explain the irregularities that present the size of the fibers of the spectral maps associated to some apparently innocuous geometric embeddings. Fix a positive integer $m$ and denote $X_m:=[0,1]^m$.\setcounter{paragraph}{0}

\begin{examples}\label{excrucial}
(i) Let $\gtm_0$ be the maximal ideal constituted by all semialgebraic functions on $X_m$ vanishing at the origin. We construct: \em a prime ideal $\gtp\subset\gtm_0$ of ${\mathcal S}(X_m)$ such that ${\tt d}_{X_m}(\gtq)=m$\em.

Define $\gtq$ as the set of all semialgebraic functions $f\in{\mathcal S}(X_m)$ satisfying: \em for each semialgebraic triangulation $(K,\Phi)$ of $X_m$ compatible with $Z(f)$ it holds $\Phi(\sigma)\subset Z(f)$ where:\em

\paragraph{}\label{propcrucial0}$\hspace{-1.5mm}$
\em $\sigma\in K$ is an $m$-dimensional simplex such that for each $d=0,\ldots,m$ there exists a $d$-dimensional face $\tau_d$ of $\sigma$ such that $\Phi(\tau_d)\subset\{\x_{d+1}=0,\ldots,\x_m=0\}$\em. 

\vspace{2mm}
Using recursively the straightforward property \ref{propcrucial} stated below, one shows that $\sigma$ is uniquely determined by \ref{propcrucial0}. We call $\sigma$ the \em indicator simplex for $(K,\Phi)$\em. 

\paragraph{}\label{propcrucial}$\hspace{-1.5mm}$
\em Let $\tau\subset\R^d$ be a simplex of dimension $d$ and $\eta_1,\eta_2$ two simplices contained in $\R^d\times[0,\infty)$ that have $\tau$ as a common face. Then $\eta_1^0\cap\eta_2^0\neq\varnothing$\em.

\paragraph{}\label{propcrucial2}$\hspace{-1.5mm}$
It holds: \em $\gtq$ is a prime ideal of ${\mathcal S}(X_m)$ and, as $\dim(\sigma)=m$, it is clear that ${\tt d}_{X_m}(\gtq)=m$\em.

\vspace{2mm}
Only the primality of $\gtq$ requires a comment. Indeed, let $f_1,f_2\in{\mathcal S}(X_m)$ be such that $f_1f_2\in\gtq$ and let $(K,\Phi)$ be a semialgebraic triangulation of $X_m$ compatible with $Z(f_1)$ and $Z(f_2)$. Let $\sigma$ be an indicator simplex for $(K,\Phi)$. Since $\Phi(\sigma)\subset Z(f_1f_2)$ and $(K,\Phi)$ is compatible with $Z(f_i)$, we may assume $\Phi(\sigma^0)\subset Z(f_1)$; hence, $\Phi(\sigma)\subset Z(f_1)$. Thus, $f_1\in\gtq$ and we conclude that $\gtq$ is a prime ideal. 

(ii) We claim: \em There is a chain of prime ideals $\gtq_0\subsetneq\cdots\subsetneq\gtq_m:=\gtm_0$ in ${\mathcal S}(X_m)$ such that ${\tt d}_{X_m}(\gtq_k)=m-k$ for $k=0,\ldots,m$\em.

For each $k=1,\ldots,m$ define $X_k:=[0,1]^k\times\{0\}\subset\R^m$. Clearly, $\{0\}\subsetneq X_1\subsetneq\cdots\subsetneq X_m$ is a chain of closed subsets of $X_m$. The restriction homomorphism $\varphi_k:{\mathcal S}(X_m)\to{\mathcal S}(X_k),\ f\mapsto f|_{X_k}$ is by \cite{dk} surjective, so the ideal $\gtq_k$ constructed in (i) for $X_k$ provides a prime ideal $\gtq_{m-k}:=\varphi_k^{-1}(\gtq_k)$ such that ${\tt d}_{X_m}(\gtq_{m-k})={\tt d}_{X_k}(\gtq_k)=k$. Now, by the definition of the ideals $\gtq_k$, it is clear that $\gtq_0\subsetneq\cdots\subsetneq\gtq_m:=\gtm_0$.

(iii) We present next an inclusion of semialgebraic sets ${\tt j}:N\hookrightarrow M$ and a non-definable chain of prime ideals $\gtp_0\subsetneq\cdots\subsetneq\gtp_m$ in ${\mathcal S}(M)$ such that \em each prime ideal $\gtp_k\in\cl_{\Speca(M)}(Y)$ where $Y:=M\setminus N$ and each fiber $\Speca({\tt j})^{-1}(\gtp_k)$ is a singleton \em (see Theorem \ref{fiberspec}).

Let $M:=X_m\setminus\{\x_{m-1}=0,\x_m=0\}$ and consider the inclusion ${\tt k}:M\hookrightarrow X_m$. The map $\Speca({\tt k}):\Speca(M)\to\Speca(X_m)$ is surjective and by Theorem \ref{minq}(iv) there exists a chain of prime ideals $\gtp_0\subsetneq\cdots\subsetneq\gtp_m$ in $\Speca(M)$ such that $\Speca({\tt k})(\gtp_k)=\gtq_k$ for $k=0,\ldots,m$. By \cite[Thm. 1]{fg2} we know that $\dim({\mathcal S}(M))=\dim({\mathcal S}^*(M))=\dim(M)=m$. Thus, the chain of prime ideals $\gtp_0\subsetneq\cdots\subsetneq\gtp_m$ has maximal length and does not admit any refinement; hence, $\gtp_m=\gtm^*$ is a maximal ideal. Notice that for each $k\geq2$ there exists $f_k\in\gtp_k$ such that $Z(f_k)\cap M=\varnothing$. In addition the zero set of each $f\in\gtp_1$ intersects $M$. Thus, $\gtp_k\cap{\mathcal W}_M=\varnothing$ if and only if $k=0,1$.

By \ref{chainp} we conclude $\gtp_1=\gtm\cap{\mathcal S}^*(M)$ where $\gtm$ is the unique maximal ideal of ${\mathcal S}(M)$ such that $\gtm\cap{\mathcal S}^*(M)\subset\gtm^*$. Notice ${\tt d}_M(\gtm)=m-1$ and that $\gtq_0$ is by Theorem \ref{minimalnlc} and property \ref{propcrucial} the unique minimal prime ideal of ${\mathcal S}(X_m)$ contained in $\gtq_1$. By Theorem \ref{minq}(iii) $\gtp_0$ is the unique minimal prime ideal contained in $\gtp_1$. 

\paragraph{}Let $N:=M\setminus\{\x_m=0\}$ and $Y:=M\setminus N=M\cap\{\x_m=0\}$. Denote the inclusion with ${\tt j}:N\hookrightarrow M$ and observe $\gtp_1=\gtm\cap{\mathcal S}^*(M)\in\cl_{\Speca(M)}(Y)$, so also $\gtp_k\in\cl_{\Speca(M)}(Y)$ for $k=2,\ldots,m$. Moreover, following the notation of \eqref{gtQ}, \em it holds $\widehat{\gtp}_k=\gtp_1$ for $k=1,\ldots,m$, $\gtp_1{\mathcal S}(M)=\gtm$ and ${\tt d}_M(\gtm)=m-1$\em. By Theorem \ref{fiberspec} \em the fiber $\Speca({\tt j})^{-1}(\gtp_k)$ is a singleton $\{\gtp_k'\}$ for $k=1,\ldots,m$\em. 

(iv) \em If $m\geq2$, one can find infinitely maximal ideals with singleton fibers with respect to $\Speca({\tt j})$ contained in $\cl_{\Speca(M)}(Y)$\em. 

To that end, fix $k\geq 1$ and $0\leq\ell< k$. Consider the inclusions 
$$
{\tt h}_{k,\ell}:X_m\to X_m,\ x:=(x_1,\ldots,x_m)\mapsto\tfrac{1}{k}(x_1,\ldots,x_m)+(\tfrac{\ell}{k},0,\ldots,0).
$$
Denote $Z_{\ell,k}:=\im({\tt h}_{k,\ell})$, which is a closed semialgebraic subset of $X_m$ of dimension $m$. One can check readily that for each $0\leq\ell< k$ the prime ideal $\gtm_{k,\ell}^*:=\Speca({\tt h}_{k,\ell})(\gtm^*)$ has a singleton fiber with respect to $\Speca({\tt j})$ and that $\gtm_{k,\ell}\neq\gtm_{k,\ell'}$ if $\ell\neq\ell'$. Thus, there exist infinitely maximal ideals with singleton fibers contained in $\cl_{\Speca(M)}(Y)$.

\begin{figure}[ht]
\centering
\begin{tikzpicture}

\draw (0,0) -- (5,0) -- (5,5) -- (0,5) -- (0,0);
\draw[fill=black!30!white] (0,0) -- (5,0) -- (5,5) -- (0,5) -- (0,0);

\draw (0,0) arc (270:360:2.5cm) -- (2.5,0) -- (0,0);
\draw[fill=black!45!white] (0,0) arc (270:360:2.5cm) -- (2.5,0) -- (0,0); 

\draw[->,thick] (5.5,3.3) -- (7.5,3.3);

\draw (6.5,3.6) node{${\tt h}_{5,4}$};

\draw[->,thick] (5.5,2.6) -- (7.5,2.6);

\draw (6.5,2.9) node{${\tt h}_{5,3}$};

\draw[->,thick] (5.5,1.9) -- (7.5,1.9);

\draw (6.5,2.1) node{${\tt h}_{5,2}$};

\draw[->,thick] (5.5,1.2) -- (7.5,1.2);

\draw (6.5,1.5) node{${\tt h}_{5,1}$};

\draw[->,thick] (5.5,0.5) -- (7.5,0.5);

\draw (6.5,0.8) node{${\tt h}_{5,0}$};

\draw (8,0) -- (13,0) -- (13,5) -- (8,5) -- (8,0);
\draw[fill=black!15!white] (8,0) -- (13,0) -- (13,5) -- (8,5) -- (8,0);

\draw (8,0) -- (9,0) -- (9,1) -- (8,1) -- (8,0);
\draw[fill=black!30!white] (8,0) -- (9,0) -- (9,1) -- (8,1) -- (8,0);
\draw (8,0) arc (270:360:0.5cm) -- (8.5,0) -- (8,0);
\draw[fill=black!45!white] (8,0) arc (270:360:0.5cm) -- (8.5,0) -- (8,0);

\draw (9,0) -- (10,0) -- (10,1) -- (9,1) -- (9,0);
\draw[fill=black!30!white] (9,0) -- (10,0) -- (10,1) -- (9,1) -- (9,0);
\draw (9,0) arc (270:360:0.5cm) -- (9.5,0) -- (9,0);
\draw[fill=black!45!white] (9,0) arc (270:360:0.5cm) -- (9.5,0) -- (9,0);

\draw (10,0) -- (11,0) -- (11,1) -- (10,1) -- (10,0);
\draw[fill=black!30!white] (10,0) -- (11,0) -- (11,1) -- (10,1) -- (10,0);
\draw (10,0) arc (270:360:0.5cm) -- (10.5,0) -- (10,0);
\draw[fill=black!45!white] (10,0) arc (270:360:0.5cm) -- (10.5,0) -- (10,0); 

\draw (11,0) -- (12,0) -- (12,1) -- (11,1) -- (11,0);
\draw[fill=black!30!white] (11,0) -- (12,0) -- (12,1) -- (11,1) -- (11,0);
\draw (11,0) arc (270:360:0.5cm) -- (11.5,0) -- (11,0);
\draw[fill=black!45!white] (11,0) arc (270:360:0.5cm) -- (11.5,0) -- (11,0);

\draw (12,0) -- (13,0) -- (13,1) -- (12,1) -- (12,0);
\draw[fill=black!30!white] (12,0) -- (13,0) -- (13,1) -- (12,1) -- (12,0);
\draw (12,0) arc (270:360:0.5cm) -- (12.5,0) -- (12,0);
\draw[fill=black!45!white] (12,0) arc (270:360:0.5cm) -- (12.5,0) -- (12,0);

\draw (4.5,4.5) node{$X_2$};
\draw (12.5,4.5) node{$X_2$};
\draw (1.5,1.5) node{$\gtm$};
\draw (8.5,0.75) node{$\gtm_{5,0}$};
\draw (9.5,0.75) node{$\gtm_{5,1}$};
\draw (10.5,0.75) node{$\gtm_{5,2}$};
\draw (11.5,0.75) node{$\gtm_{5,3}$};
\draw (12.5,0.75) node{$\gtm_{5,4}$};

\end{tikzpicture}
\caption{Construction of the maximal ideals $\gtm_{k,\ell}$ for $m=2$ (and $k=5$).}
\end{figure}
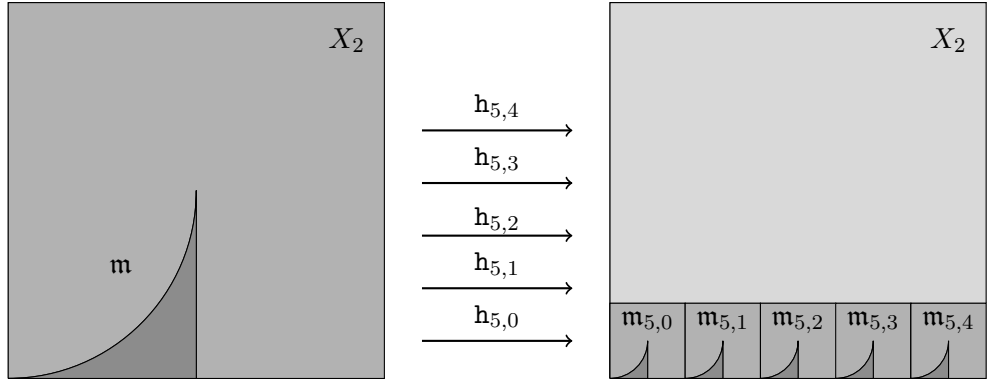

(v) Fix $1\leq\ell\leq m-2$. We construct next an inclusion ${\tt j}':N\hookrightarrow X_m$ and a non-refinable chain or prime ideals \em $\gtq_1'\subsetneq\cdots\subsetneq\gtq_{m-\ell}$ in ${\mathcal S}(X_m)$ such that the fiber $\Speca({\tt j}')^{-1}(\gtq_1')$ is a singleton while $\Speca({\tt j}')^{-1}(\gtq_k')$ is an infinite set for $k\geq2$ \em (see Theorem \ref{fiberspec} and Lemmas \ref{pmchain} and \ref{fiberspec3}).

Write $y_{(\ell)}:=(x_1,\ldots,x_{\ell-1})$ and $z_{(\ell)}:=(x_{\ell+1},\ldots,x_m)$ for $1\leq\ell\leq m-2$. Consider the semialgebraic map $\psi_\ell:X_m\to X_m$ given by
$$
(x_1,\ldots,x_m)\mapsto 
\begin{cases}
(y_{(\ell)},x_\ell(x_{m-1}+x_m),z_{(\ell)})&\text{ if $0\leq x_\ell<\tfrac{1}{2},x_{m-1}+x_m\leq1$,}\\
(y_{(\ell)},(1-x_\ell)(x_{m-1}+x_m-2)+1,z_{(\ell)})&\text{ if $\tfrac{1}{2}\leq x_\ell\leq 1,x_{m-1}+x_m\leq1$,}\\
(x_1,\ldots,x_m)&\text{ if $x_{m-1}+x_m\geq1$.}
\end{cases}
$$

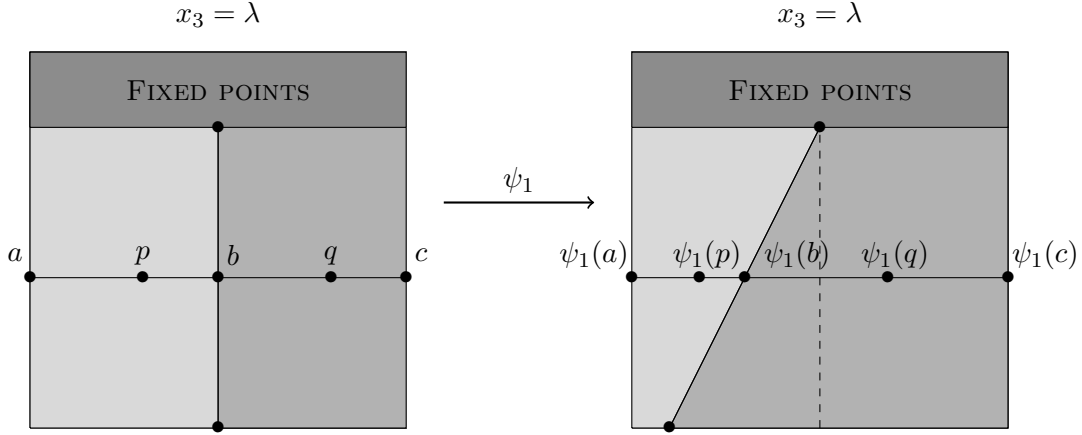
\begin{figure}[ht]
\centering
\begin{tikzpicture}

\draw (0,0) -- (5,0) -- (5,5) -- (0,5) -- (0,0);
\draw[fill=black!15!white] (0,0) -- (5,0) -- (5,5) -- (0,5) -- (0,0);
\draw[fill=black!30!white] (2.5,0) -- (5,0) -- (5,4) -- (2.5,4) -- (2.5,0);

\draw (2.5,0) -- (2.5,4);

\draw (0,2) -- (5,2);

\draw (0,4) -- (5,4) -- (5,5) -- (0,5) -- (0,4);
\draw[fill=black!45!white] (0,4) -- (5,4) -- (5,5) -- (0,5) -- (0,4);

\draw (2.5,4) node{$\bullet$};
\draw (5,2) node{$\bullet$};
\draw (0,2) node{$\bullet$};
\draw (2.5,2) node{$\bullet$};
\draw (1.5,2) node{$\bullet$};
\draw (4,2) node{$\bullet$};
\draw (2.5,0) node{$\bullet$};

\draw (2.5,4.5) node{\sc Fixed points};
\draw (-0.2,2.3) node{$a$};
\draw (1.5,2.3) node{$p$};
\draw (2.7,2.3) node{$b$};
\draw (4,2.3) node{$q$};
\draw (5.2,2.3) node{$c$};
\draw (2.5,5.5) node{$x_3=\lambda$};

\draw[->,thick] (5.5,3) -- (7.5,3);

\draw (6.5,3.3) node{$\psi_1$};

\draw (8,0) -- (13,0) -- (13,5) -- (8,5) -- (8,0);
\draw[fill=black!15!white] (8,0) -- (13,0) -- (13,5) -- (8,5) -- (8,0);
\draw[fill=black!30!white] (8.5,0) -- (13,0) -- (13,4) -- (10.5,4) -- (8.5,0);

\draw (8.5,0) -- (10.5,4);
\draw[dashed] (10.5,0) -- (10.5,4);

\draw (8,2) -- (13,2);

\draw (8,4) -- (13,4) -- (13,5) -- (8,5) -- (8,4);
\draw[fill=black!45!white] (8,4) -- (13,4) -- (13,5) -- (8,5) -- (8,4);

\draw (10.5,4) node{$\bullet$};
\draw (13,2) node{$\bullet$};
\draw (8,2) node{$\bullet$};
\draw (8.9,2) node{$\bullet$};
\draw (9.5,2) node{$\bullet$};
\draw (11.4,2) node{$\bullet$};
\draw (8.5,0) node{$\bullet$};

\draw (10.5,4.5) node{\sc Fixed points};
\draw (7.5,2.3) node{$\psi_{1}(a)$};
\draw (9,2.3) node{$\psi_{1}(p)$};
\draw (10.2,2.3) node{$\psi_{1}(b)$};
\draw (11.5,2.3) node{$\psi_{1}(q)$};
\draw (13.5,2.3) node{$\psi_{1}(c)$};
\draw (10.5,5.5) node{$x_3=\lambda$};

\end{tikzpicture}
\caption{Restriction of the map $\psi_{1}$ to the plane $x_3=\lambda$ for $m=3$.}
\end{figure}

Note that $\psi_\ell|_M:M\to M$ is a semialgebraic homeomorphism. Thus, the same happens to the restriction of the composition $\psi_{(\ell)}=\psi_\ell\circ\cdots\circ\psi_1:X_m\to X_m$ to $M$. Observe
\begin{equation}\label{final}
\psi_{(\ell)}(\{0\leq x_1\leq\tfrac{1}{2},\cdots,0\leq x_\ell\leq\tfrac{1}{2},x_{m-1}=0,x_m=0\})=\{0\}
\end{equation}
and consider the semialgebraic map ${\tt k}':=\psi_{(\ell)}\circ{\tt k}:M\to X_m$ where ${\tt k}:M\hookrightarrow X_m$ is the inclusion.

Denote $\gtq_k':=\Speca(\psi_{(\ell)})(\gtq_k)$ for $k=1,\ldots,m-\ell$. Observe $\gtq_0'\subsetneq\cdots\subsetneq\gtq_{m-\ell}'$ but for $k=m-\ell+1,\ldots,m$ it holds by \eqref{final} that $\Speca(\psi_{(\ell)})(\gtq_k)=\gtq_{m-\ell}'$, which is the maximal ideal of ${\mathcal S}(X_m)$ of all semialgebraic functions on $X_m$ vanishing at the origin. In addition by \eqref{final} we have
\begin{equation}\label{valueds}
{\tt d}_{X_m}(\gtq_k')=\begin{cases}
{\tt d}_{X_m}(\gtq_k)=m-k&\text{if $0\leq k\leq m-\ell-1$},\\
0&\text{if $k=m-\ell$.}
\end{cases}
\end{equation}
As the chain $\gtq_0\subsetneq\cdots\subsetneq\gtq_m$ is non-refinable, the same happens  by Theorem \ref{minq} to the chain $\gtq_0'\subsetneq\cdots\subsetneq\gtq_{m-\ell}'$. As $\Speca({\tt k})(\gtp_k)=\gtq_k$, it follows that $\Speca({\tt k}')$ maps the non-refinable chain $\gtp_0\subsetneq\cdots\subsetneq\gtp_m$ onto $\gtq_0'\subsetneq\cdots\subsetneq\gtq_{m-\ell}'$, so $\Speca({\tt k}')(\gtp_k)=\gtq_{m-\ell}'$ for $k=m-\ell,\ldots,m$. 

Define ${\tt j}':={\tt k}'\circ{\tt j}:N\hookrightarrow M':=X_m$ and recall $\Speca({\tt j})^{-1}(\gtp_k)=\{\gtp_k'\}$ for $k=0,\ldots,m$. Consequently, 
$$
\Speca({\tt j}')(\gtp_k')=\begin{cases}
\gtq_k'&\text{if $0\leq k\leq m-\ell-1$},\\
\gtq_{m-\ell}'&\text{if $m-\ell\leq k\leq m$.}
\end{cases}
$$
As ${\mathcal W}_{M'}={\mathcal E}_{M'}=\varnothing$, we have $\widehat{\gtq_i'}=\gtq_i'$. Thus, by \eqref{valueds} and Theorem \ref{fiberspec} (or Lemma \ref{fiberspec3}) \em the fiber $\Speca({\tt j}')^{-1}(\gtq_1')$ is a singleton while $\Speca({\tt j}')^{-1}(\gtq_k')$ is an infinite set for $k\geq2$\em. In particular, \em the fiber of $\gtq_{m-\ell}'$ contains the subchain $\gtp_{m-\ell}'\subsetneq\ldots\subsetneq\gtp_m'$\em. Compare this fact with Lemma \ref{pmchain}.
\qed
\end{examples}

\section{Proofs of Lemma \ref{main0}, Theorems \ref{main} and \ref{main2} and some consequences}\label{s4}
\renewcommand{\theparagraph}{\thesubsection.\arabic{paragraph}}
\setcounter{secnumdepth}{4}

The main purpose of this section is to prove Lemma \ref{main0} and Theorems \ref{main} and \ref{main2}. We also show how to reduce the computation of the size of the fibers of spectral maps induced by (dense) semialgebraic embeddings to the case of pure dimensional suitably arranged {\tt sa}-tuples. \setcounter{paragraph}{0}

\subsection{Proof of Lemma \ref{main0}} Let us see how we can reduce the computation of the size of the fibers of $\Specs({\tt j}):\Specs(N)\to\Specs(M)$ to analyze the fibers of $\Speca({\tt j}):\Speca(N)\to\Speca(M)$.

\begin{proof}[\sl Proof of Lemma \em\ref{main0}]
(i) First, let $\gtq\in\Specs(N)$ and $\gtp=\Specs({\tt j})(\gtq)=\gtq\cap{\mathcal S}(M)$. As ${\mathcal W}_N$ is contained in the units of ${\mathcal S}(N)$, it is clear that $\gtq\cap{\mathcal W}_N=\varnothing$, so $\gtp\cap{\mathcal W}_N=\varnothing$. Conversely, let $\gtp$ be a prime ideal of ${\mathcal S}(M)$ such that $\gtp\cap{\mathcal W}_N=\varnothing$. Consider the diagram
$$
\xymatrix{
\Specs(N)\ar[rr]^{\Specs({\tt j})}\ar@{^{(}->}[d]_{{\mathfrak i}_N}&&\Specs(M)\ar@{^{(}->}[d]^{{\mathfrak i}_M}\\
\Speca(N)\ar[rr]^{\Speca({\tt j})}&&\Speca(M)\\}
$$
and let $\gtp':=\gtp\cap{\mathcal S}^*(M)$. By Theorem \ref{main}(i), whose proof does not use Lemma \ref{main0}(i), the map $\Speca({\tt j}):\Speca(N)\to\Speca(M)$ is surjective. We claim:

\paragraph{}\label{previi} $\hspace{-1mm}$
\em If $\gtq'\in\Speca(N)$ satisfies $\Speca(\gtq')=\gtp'$, then $\gtq'\cap{\mathcal W}_N=\varnothing$\em. 

Suppose by contradiction that $\gtq'\cap{\mathcal W}_N\neq\varnothing$ and let $f\in\gtq'\cap{\mathcal W}_N$. Assume that $M$ is bounded and let $X_2:=\cl_{\R^{m+1}}({\rm graph}(f))$ and $X_1:=\cl_{\R^m}(N)=\cl_{\R^m}(M)$. Let $\rho:X_2\to X_1,\ (x,y)\mapsto x$ and ${\tt k}_2:N\hookrightarrow X_2,\ x\mapsto(x,f(x))$. By \ref{remainder} we know that $\rho^{-1}(X_1\setminus N)=X_2\setminus{\tt k}_2(N)$ and $\rho(X_2\setminus{\tt k}_2(N))=X_1\setminus N$. Let $\widehat{f}:X_2\to\R, (x,y)\mapsto y$. Observe that $\widehat{f}\circ{\tt k}_2=f$ and let $T:=Z(\widehat{f})$. As $f$ does not vanish in $N$, we have $T\cap{\tt k}_2(N)=\varnothing$. Then $\rho(T)\subset X_1\setminus N$, so $\rho^{-1}(\rho(T))\subset X_2\setminus{\tt k}_2(N)$. Let $g\in{\mathcal S}(X_1)$ be such that $Z(g)=\rho(T)$. Observe $Z(\widehat{f})=T\subset Z(g\circ\rho)$ and, as $\gtq'\cap{\mathcal S}(X_2)$ is a $z$-ideal and $\widehat{f}\in\gtq'\cap{\mathcal S}(X_2)$, we deduce $g\circ\rho\in\gtq'\cap{\mathcal S}(X_2)$. Thus, $g\in\gtq'$, so $g\in\gtp'=\gtq'\cap{\mathcal S}^*(M)$. On the other hand, $Z(g)=\rho(T)\subset X_1\setminus N$, so $g\in\gtp'\cap{\mathcal W}_N$, which is a contradiction. We conclude $\gtq'\cap{\mathcal W}_N=\varnothing$.

By \ref{inequality} the image of ${\mathfrak i}_N$ is the collection of all prime ideals of ${\mathcal S}^*(N)$ that do not intersect ${\mathcal W}_N$, so $\gtp=\Spec({\tt j})(\gtq'{\mathcal S}(N))$ belongs to the image of $\Specs({\tt j})$. 

(ii) The first part of the statement has been already proved in \ref{previi}. Once this is proved, the rest of the statement follows straightforwardly from \ref{inequality}.

(iii) We have to show that the restriction map 
\begin{equation}\label{hm}
\Specs({\tt j})|:\Specs(N)\setminus\Specs({\tt j})^{-1}(\Zz_{\Specs(M)}(h))\to\Speca(M)\setminus\Zz_{\Specs(M)}(h)
\end{equation}
is a homeomorphism where $Z(h)=\cl_{\R^m}(\cl_{\R^m}(M)\setminus N)$. Consider the locally compact semialgebraic set $N_{\lc}:=\cl_{\R^m}(M)\setminus Z(h)$. By Proposition \ref{rho} we know that $N_{\lc}$ is dense in $N$, so it is also dense in $M$. Consider the inclusions ${\tt j_1}:N_{\lc}\hookrightarrow N$ and ${\tt j_2}:N_{\lc}\hookrightarrow M$. It holds ${\tt j}_2={\tt j}\circ{\tt j}_1$. By Theorem \ref{mins} we know that the maps
\begin{multline*}
\Specs({\tt j}_1):\Specs(N_{\lc})\to\Specs(N)\setminus\Zz_{\Specs(N)}(h)\\
\text{and}\quad\Specs({\tt j}_2):\Specs(N_{\lc})\to\Specs(M)\setminus\Zz_{\Specs(M)}(h)
\end{multline*}
are homeomorphisms. Notice that $\Specs({\tt j})^{-1}(\Zz_{\Specs(M)}(h))=\Zz_{\Specs(N)}(h)$. As the following diagrams are commutative,
$$\enlargethispage{3mm}
\xymatrix{
N\ar@{^{(}->}[r]^{\tt j}&M\\
N_{\lc}\ar@{^{(}->}[u]^{{\tt j}_1}\ar@{^{(}->}[ur]_{{\tt j}_2}
}\quad\begin{array}{c} \\[20pt]\leadsto\end{array}\quad
\xymatrix{
\Specs(N)\setminus\Zz_{\Specs(N)}(h)\ar[rr]^{\Specs({\tt j})|}&&\Specs(M)\setminus\Zz_{\Specs(M)}(h)
\\
\Specs(N_{\lc})\ar[u]^{\Specs({\tt j}_1)}_\cong\ar[urr]_(0.45){\Specs({\tt j}_2)}^(0.45)\cong
}
$$
we conclude that also the map $\Specs({\tt j})|$ in \eqref{hm} is a homeomorphism, as required.
\end{proof}

\subsection{Proof of Theorem \ref{main}}\setcounter{paragraph}{0} 
The proof of this result follows mainly from Theorem \ref{minq} where it is partially approached for a suitably arranged {\tt sa}-tuple.

\begin{proof}[\sl Proof of Theorem \em\ref{main}]
(i) Consider the auxiliary suitably arranged {\tt sa}-tuple 
$$
(M,N_{\lc}, M\setminus N_{\lc},{\tt j}_1,{\tt i}_1)
$$ 
and the inclusion ${\tt j}_2:N_{\lc}\hookrightarrow N$. Observe ${\tt j}_1={\tt j}\circ{\tt j}_2$. Thus, $\Speca({\tt j}_1)=\Speca({\tt j})\circ\Speca({\tt j}_2)$ and since $\Speca({\tt j}_1)$ is by Theorem \ref{minq}(iii) surjective, also $\Speca({\tt j})$ is surjective, as required. 

(ii) To prove the first equality observe first that by \ref{closure} (ii) $\Speca(C)\cong\cl_{\Speca(N)}(C)$ and 
$$
\Speca(\cl_M(C))\cong\cl_{\Speca(M)}(\cl_M(C))=\cl_{\Speca(M)}(C). 
$$
The spectral map $\Speca({\tt j}_0):\Speca(C)\to\Speca(\cl_M(C))$ induced by the inclusion ${\tt j_0}:C\hookrightarrow\cl_M(C)$ is by (i) surjective, so $\Speca({\tt j})(\cl_{\Speca(N)}(C))=\cl_{\Speca(M)}(C)$.

To prove the second equality, note first that the inclusion
$$
\cl_{\Speca(N)}(C)\subset\Speca({\tt j})^{-1}(\cl_{\Speca(M)}(C))
$$
is clear, so
$$
\cl_{\Speca(N)}(C)\setminus\Speca({\tt j})^{-1}(Z)\subset\Speca({\tt j})^{-1}(\cl_{\Speca(M)}(C)\setminus Z).
$$
To prove the converse, we show
$$
\Speca({\tt j})\Big(\Speca(N)\setminus(\cl_{\Speca(N)}(C)\cup\Speca({\tt j})^{-1}(Z))\Big)
\subset\Speca(M)\setminus(\cl_{\Speca(M)}(C)\cup Z).
$$
Let $\gtq\in\Speca(N)\setminus(\cl_{\Speca(N)}(C)\cup\Speca({\tt j})^{-1}(Z))$. Then there exist $h\in{\mathcal S}^*(N)$ and $g\in{\mathcal S}^*(M)$ such that $C\subset Z_N(h)$, $h\not\in\gtq$, $Y\subset Z_M(g)$ and $g\not\in\gtq\cap{\mathcal S}^*(M)$. As $h$ is bounded and $g|_{M\setminus N}=0$, we deduce that $hg$ defines an element of ${\mathcal S}^*(M)$ such that $C\cup Y\subset Z_M(hg)$. As $hg\not\in\gtq\cap{\mathcal S}^*(M)=\Speca({\tt j})(\gtq)$, we deduce by \ref{closure}
$$
\Speca({\tt j})(\gtq)\not\in\cl_{\Speca(M)}(C\cup Y)=\cl_{\Speca(M)}(C)\cup Z,
$$
as required.

(iii) Let us check first: 

\paragraph{}\label{3a1}$\hspace{-1.5mm}$\em The restriction map $\Speca({\tt j})|:\Speca(N)\setminus\Speca({\tt j})^{-1}(Z)\to\Speca(M)\setminus Z$ is bijective\em.

We proceed by induction on the dimension of $M$. By Theorem \ref{minq}(iii) the result is true if $N$ is locally compact. In particular this holds if $\dim(M)=\dim(N)\leq1$. Assume the result true if $\dim(M)\leq d-1$ and let us show that it is also true if $\dim(M)=d$.

Consider the auxiliary suitably arranged {\tt sa}-tuples $(M_i,N_i,Y_i,{\tt j}_i,{\tt i}_i)$ for $i=1,2$ where
$$
\begin{cases}
M_1:=M,\ N_1:=N_{\lc},\ Y_1:=M\setminus N_{\lc},\\
M_2:=N,\ N_2:=N_{\lc},\ Y_2:=N\setminus N_{\lc}.\
\end{cases}
$$
As ${\tt j}_1={\tt j}\circ{\tt j}_2$, we infer $\Speca({\tt j}_1)=\Speca({\tt j})\circ\Speca({\tt j}_2)$. Write $Z_i:=\cl_{\Speca(M_i)}(Y_i)$. As $N_i$ is locally compact, the restriction map
\begin{equation}\label{88}
\Speca({\tt j}_i)|:\Speca(N_i)\setminus\Speca({\tt j}_i)^{-1}(Z_i)\to\Speca(M_i)\setminus Z_i
\end{equation}
is a homeomorphism. Observe $Y_2=N\setminus N_{\lc}\subset M\setminus N_{\lc}=Y_1$; hence,
$$
\cl_{\Speca(M)}(Y_2)\subset\cl_{\Speca(M)}(Y_1)=Z_1.
$$
By (ii) we get $\Speca({\tt j})(\cl_{\Speca(N)}(Y_2))=\cl_{\Speca(M)}(Y_2)$, so $\Speca({\tt j})(Z_2)\subset Z_1$. As $\Speca({\tt j})$ is surjective, $Z_2\subset\Speca({\tt j})^{-1}(Z_1)$ and
$$
\Speca({\tt j}_2)^{-1}(Z_2)\subset\Speca({\tt j}_2)^{-1}(\Speca({\tt j})^{-1}(Z_1))=\Speca({\tt j}_1)^{-1}(Z_1).
$$
Consequently, the restriction map
$$
\Speca({\tt j})|=\Speca({\tt j}_1)|\circ(\Speca({\tt j}_2)|)^{-1}:\Speca(N)\setminus\Speca({\tt j})^{-1}(Z_1)\to\Speca(M)\setminus Z_1
$$
is by equation \eqref{88} a homeomorphism. As $Y_1=M\setminus N_{\lc}=(M\setminus N)\cup(N\setminus N_{\lc})=Y\cup Y_2$, 
$$
Z_1=\cl_{\Speca(M)}(Y_1)=\cl_{\Speca(M)}(Y)\cup\cl_{\Speca(M)}(Y_2)=Z\cup\cl_{\Speca(M)}(Y_2).
$$
By (ii) we know
$$
\Speca({\tt j})^{-1}(\cl_{\Speca(M)}(Y_2)\setminus Z)=\cl_{\Speca(N)}(Y_2)\setminus\Speca({\tt j})^{-1}(Z)=Z_2\setminus\Speca({\tt j})^{-1}(Z)
$$
and to finish this part we must show: 

\paragraph{}\label{3a2}$\hspace{-1.5mm}$\em The restriction map $\Speca({\tt j})|:\cl_{\Speca(N)}(Y_2)\setminus\Speca({\tt j})^{-1}(Z)\to\cl_{\Speca(M)}(Y_2)\setminus Z$ is bijective.\em 

Indeed, by \ref{closure}(iii)
\begin{multline}\label{7688}
\cl_{\Speca(M)}(Y_2)\cap Z=\cl_{\Speca(M)}(\cl_M(Y_2))\cap\cl_{\Speca(M)}(\cl_M(Y))\\=\cl_{\Speca(M)}(\cl_M(Y_2)\cap\cl_M(Y)).
\end{multline}
As $Y_2=N\setminus N_{\lc}$ is closed in $N$, we have $\cl_M(Y_2)\cap N_{\lc}=\varnothing$, so
\begin{multline}\label{76}
\cl_M(Y_2)\setminus Y_2=\cl_M(Y_2)\cap(M\setminus(N\setminus N_{\lc}))=\cl_M(Y_2)\cap((M\setminus N)\cup N_{\lc}))\\
=(\cl_M(Y_2)\cap Y)\cup(\cl_M(Y_2)\cap N_{\lc})=(\cl_M(Y_2)\cap Y)\subset\cl_M(Y_2)\cap\cl_M(Y).
\end{multline}
Let ${\tt k}:\cl_M(Y_2)\hookrightarrow M$ be the inclusion map. By \ref{closure}(ii) the maps
\begin{align*}
&\Speca({\tt k}):\Speca(\cl_M(Y_2))\to\cl_{\Speca(M)}(\cl_M(Y_2)),\\
&\Speca({\tt i}_2):\Speca(Y_2)\to\cl_{\Speca(N)}(Y_2)
\end{align*}
are homeomorphisms. Consider the {\tt sa}-tuple $(M_3,N_3,Y_3,{\tt j}_3,{\tt i}_3)$ where $M_3:=\cl_M(Y_2)$ and $N_3:=Y_2$. Write $Z_3:=\cl_{\Speca(M_3)}(Y_3)$. By \eqref{7688} and \eqref{76} we get
\begin{multline}\label{incl00}
\Speca({\tt k})(Z_3)=\cl_{\Speca(M)}(Y_3)=\cl_{\Speca(M)}(\cl_M(Y_2)\setminus Y_2)\\
\subset\cl_{\Speca(M)}(\cl_M(Y_2)\cap\cl_M(Y))=\cl_{\Speca(M)}(Y_2)\cap Z.
\end{multline}
Consider the commutative diagrams
$$
\xymatrix{
Y_2\ar@{^{(}->}[rr]^{{\tt j}_3}\ar@{^{(}->}[d]_{{\tt i}_2}&&\cl_M(Y_2)\ar@{^{(}->}[d]^{{\tt k}}\\
N\ar@{^{(}->}[rr]^{{\tt j}}&&M
}
\quad\begin{array}{c} \\[20pt]\leadsto\end{array}\quad
\xymatrix{
\Speca(N_3)\ar@{->}[rr]^{\Speca({\tt j}_3)}\ar[d]_{\Speca({\tt i}_2)}^{\cong}&&\Speca(M_3)\ar[d]^{\Speca({\tt k})}_{\cong}\\
\cl_{\Speca(N)}(Y_2)\ar[rr]^{\Speca({\tt j})|}&&\cl_{\Speca(M)}(Y_2).
}
$$

Thus, by \eqref{incl00} it is enough to prove: 

\paragraph{}\label{3a3}$\hspace{-1.5mm}$\em The restriction map $\Speca({\tt j}_3)|:\Speca(N_3)\setminus\Speca({\tt j})^{-1}(Z_3)\to\Speca(M_3)\setminus Z_3$ is bijective\em.

As $\dim(M_3)=\dim(N_3)<\dim(N)=\dim(M)$, statement \ref{3a3} follows by induction, so \ref{3a2} and consequently \ref{3a1} also hold. 

Since $\Speca({\tt j})|$ is continuous and bijective, to finish the proof of (iii) we show: 

\paragraph{}$\hspace{-1.5mm}$\em The restriction map $\Speca({\tt j})|:\Speca(N)\setminus\Speca({\tt j})^{-1}(Z)\to\Speca(M)\setminus Z$ is open\em.

It is sufficient to show that given $g\in{\mathcal S}^*(N)$, the following straightforward equality holds:
\begin{multline*}
\Speca({\tt j})(\Dd_{\Speca(N)}(g)\cap(\Speca(N)\setminus\Speca({\tt j})^{-1}(Z)))\\
=\bigcup_{a\in\ker\phi}\Dd_{\Speca(M)}(ag)\cap(\Speca(M)\setminus Z)
\end{multline*}
where $\phi:{\mathcal S}^*(M)\to{\mathcal S}^*(Y),\ f\to f|_Y$ is the restriction homomorphism. 

(iv) This follows from the previous statements using $\Speca({\tt h})(\betaa N)=\betaa M$.
\end{proof}

\subsection{Proof of Theorem \ref{main2}}\setcounter{paragraph}{0} 
The proof of this result requires some preparation. In the following $(M,N,Y,{\tt j},{\tt i})$ denotes an {\tt sa}-tuple. Let us find first sufficient conditions to guarantee that the fibers of certain points of $\cl_{\Speca(M)}(Y)$ under $\Speca({\tt j})$ contain infinitely many points. 

\begin{lem}[Fibers of infinite size]\label{crucialstep}
Assume $M$ is pure dimensional of dimension $d$. Let $C\subset Y$ be a semialgebraic subset of $Y$ whose codimension in $M$ is $\geq 2$ and let $\gtp\in\cl_{\Speca(M)}(C)$. Then 
\begin{itemize}
\item[(i)] For each $r\geq1$ there exists a subset $\{\gtq_i\}_{i=1}^r\subset\Speca({\tt j})^{-1}(\gtp)$ such that $\gtq_i\not\subset\gtq_j$ and $\gtq_j\not\subset\gtq_i$ if $i\neq j$. In particular, the fiber $\Speca({\tt j})^{-1}(\gtp)$ is an infinite set.
\item[(ii)] If $\gtp=\gtm^*$ is a maximal ideal, the fiber $\Speca({\tt j})^{-1}(\gtm^*)$ contains infinitely many maximal ideals of ${\mathcal S}^*(N)$.
\end{itemize}
\end{lem}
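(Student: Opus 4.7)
\emph{My overall plan} is to reduce the statement to producing, for each $r\geq 1$, a family $W_1,\dots,W_r$ of closed semialgebraic subsets of $M$ with three properties: (a) $C\subset\cl_M(W_i\cap N)$ for every $i$; (b) each $W_i\cap N$ is closed in $N$; and (c) $(W_i\cap N)\cap(W_j\cap N)=\varnothing$ whenever $i\neq j$. With such a family at hand, I would invoke Theorem \ref{main}(ii) on the closed semialgebraic subset $W_i\cap N$ of $N$: the identity $\Speca({\tt j})(\cl_{\Speca(N)}(W_i\cap N))=\cl_{\Speca(M)}(W_i\cap N)$, combined with $\cl_{\Speca(M)}(W_i\cap N)\supset\cl_{\Speca(M)}(C)\ni\gtp$, yields a prime ideal $\gtq_i\in\cl_{\Speca(N)}(W_i\cap N)$ with $\Speca({\tt j})(\gtq_i)=\gtp$, which will be our candidate.

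\emph{The main obstacle} is the geometric construction of the $W_i$'s, which is precisely where the hypothesis $\codim_M(C)\geq 2$ enters. My plan is to fix a semialgebraic compactification $(X,{\tt k})$ of $M$ (see \ref{mpsc}) together with a semialgebraic triangulation $(K,\Phi)$ of $X$ compatible with $M$, $N$, $Y$ and $C$, and then to choose, possibly after iterated refinement of $K$, for each simplex $\sigma\in K$ with $\Phi(\sigma^0)\subset C$, a family of $r$ pairwise distinct $d$-dimensional simplices $\tau^1_\sigma,\dots,\tau^r_\sigma\in K$ containing $\sigma$ as a face, with $\Phi$ mapping the interior of each $\tau^\ell_\sigma$ into $N$ and with the $\tau^\ell_\sigma$'s pairwise meeting only along faces whose image lies in $C$. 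The existence of such a selection is forced by $\codim_M(C)\geq 2$ together with the pure-dimensionality of $M$: the link of each $\sigma\subset C$ inside $M$ has dimension at least $1$ and hence acquires arbitrarily many vertices after enough refinement, while the generic $d$-simplex of the star of $\sigma$ lies in $N$ because $\dim(Y)<d$. Setting $W_i:=\bigcup_{\Phi(\sigma^0)\subset C}\Phi(\tau^i_\sigma)$ then produces closed semialgebraic subsets of $M$ satisfying (a)--(c).

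\emph{To conclude (i)} from the $\gtq_i$'s, suppose $\gtq_i\subset\gtq_j$ for some $i\neq j$. By \ref{closure}(i), $\gtq_i\in\cl_{\Speca(N)}(W_i\cap N)$ means that $\gtq_i$ contains the kernel of the restriction homomorphism $\mathcal S^*(N)\to\mathcal S^*(W_i\cap N)$, and the same then holds for $\gtq_j$; hence $\gtq_j\in\cl_{\Speca(N)}(W_i\cap N)\cap\cl_{\Speca(N)}(W_j\cap N)$, which by \ref{closure}(iii) and property (c) equals $\cl_{\Speca(N)}(\varnothing)=\varnothing$, a contradiction. \emph{For (ii)}, assume $\gtp=\gtm^*$ is maximal and, for each $i$, choose a maximal ideal $\gtn_i^*$ of $\mathcal S^*(N)$ containing $\gtq_i$. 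Its contraction $\gtn_i^*\cap\mathcal S^*(M)$ is a prime ideal containing $\gtm^*$, hence equal to $\gtm^*$ by maximality, so $\gtn_i^*\in\betaa N$ lies in the fiber above $\gtm^*$. The same argument as in (i), applied to the $\gtn_i^*$'s in place of the $\gtq_i$'s, forces $\gtn_i^*\neq\gtn_j^*$ for $i\neq j$; letting $r$ grow, one obtains infinitely many maximal ideals in the fiber.
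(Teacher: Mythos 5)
Your overall framework is essentially the paper's: construct a family of closed semialgebraic subsets of $N$ that are pairwise disjoint but all "adherent to" $\gtp$, apply Theorem~\ref{main}(ii) to obtain a preimage of $\gtp$ in each closure, and derive incomparability from disjointness of the $\Speca(N)$-closures. Your use of \ref{closure}(iii) to deduce $\cl_{\Speca(N)}(W_i\cap N)\cap\cl_{\Speca(N)}(W_j\cap N)=\varnothing$ is in fact slightly cleaner than the paper's detour through connected components via \ref{closure}(iv). The argument for part (ii) also matches the paper's.

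However, there is a genuine gap in the geometric construction. You attempt to build the $W_i$'s \emph{globally} over all of $C$, insisting on $C\subset\cl_M(W_i\cap N)$, by choosing, for each simplex $\sigma$ with $\Phi(\sigma^0)\subset C$, a family $\tau^1_\sigma,\dots,\tau^r_\sigma$ of $d$-simplices in $\mathrm{star}(\sigma)$ and setting $W_i:=\bigcup_\sigma\Phi(\tau^i_\sigma)$. But the coherence you impose is only \emph{within} a fixed $\sigma$ (the $\tau^\ell_\sigma$ pairwise meet only in faces mapped to $C$); nothing prevents two different base simplices $\sigma,\sigma'$ with overlapping stars from producing $\tau^1_\sigma=\tau^2_{\sigma'}$. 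In that case a $d$-simplex whose interior lies in $N$ belongs to both $W_1$ and $W_2$, so $(W_1\cap N)\cap(W_2\cap N)\neq\varnothing$ and property (c) fails. Fixing this globally would amount to a nontrivial "rainbow colouring" of the $d$-simplices adjacent to $C$, which you do not address (and which your sentence about links acquiring vertices after refinement does not supply).

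The paper avoids the issue by first \emph{localizing}: since $\cl_{\Speca(M)}(C)=\bigcup_\ell\cl_{\Speca(M)}(C_\ell)$ with $C_\ell=\sigma_\ell\cap C$ for the $d$-simplices $\sigma_\ell$ of a barycentrically subdivided triangulation, one picks a single $\sigma_1$ with $\gtp\in\cl_{\Speca(M)}(C_1)$, replaces $C$ by $C':=C_1$, and builds all $r$ auxiliary $d$-simplices $\epsilon_1,\dots,\epsilon_r$ \emph{inside} $\sigma_1$, sharing exactly the face $\tau_1$ with $\tau_1^0\subset C_1\subset\tau_1$. Your condition (a) is then weakened to $\gtp\in\cl_{\Speca(M)}(C')\subset\cl_{\Speca(M)}(\cl_M(W_i\cap N))$, which is all that the final argument actually needs, and the cross-$\sigma$ coherence problem disappears. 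This reduction to a sub-face $C'$ of $C$ is the missing idea in your proposal; without it the construction of the $W_i$'s does not go through as written.
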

Before proving this lemma, we need a preliminary result concerning triangulations.
\begin{lem}\label{trian}
Let $(K,\Phi)$ be a triangulation of a closed and bounded semialgebraic set $X$ compatible with a finite family ${\mathcal F}=\{T_1,\ldots,T_r\}$ of semialgebraic subsets of $X$. Let $(L,\Psi)$ be the first barycentric subdivision of $K$ and let $\sigma\in L$. Suppose $\sigma^0\cap\Psi^{-1}(T_k)=\varnothing$. Then either $\sigma\cap\Psi^{-1}(T_k)=\varnothing$ or there exists a proper face $\tau_k$ of $\sigma$ such that $\tau_k^0\subset\sigma\cap\Psi^{-1}(T_k)\subset\tau_k$.
\end{lem}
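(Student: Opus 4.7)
The plan is to exploit the combinatorial structure of the first barycentric subdivision. Since $(K,\Phi)$ is compatible with ${\mathcal F}$, each $T_k$ is a union of images $\Phi(\mu^0)$ of open simplices of $K$. As $(L,\Psi)$ refines $(K,\Phi)$, the triangulation $(L,\Psi)$ is also compatible with $T_k$, so $\Psi^{-1}(T_k)$ is a union of open simplices of $L$. Consequently $\sigma\cap\Psi^{-1}(T_k)$ is the union of those open faces $\eta^0$ of $\sigma$ satisfying $\Psi(\eta^0)\subset T_k$, and by hypothesis $\sigma^0$ is not among them.

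The key structural fact I will use is that each simplex $\sigma\in L$ corresponds to a unique ascending chain $\mu_0<\mu_1<\cdots<\mu_d$ of simplices of $K$, with vertices of $\sigma$ being the barycenters $b_{\mu_0},\ldots,b_{\mu_d}$, and that faces of $\sigma$ are in bijection with sub-chains. For a face $\eta$ of $\sigma$ arising from the sub-chain $\mu_{i_0}<\cdots<\mu_{i_j}$, a direct convex-combination check shows $\Psi(\eta^0)\subset\Phi(\mu_{i_j}^0)$: any strictly positive convex combination of $b_{\mu_{i_0}},\ldots,b_{\mu_{i_j}}$ places a strictly positive weight on every vertex of $\mu_{i_j}$. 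By the compatibility of $(K,\Phi)$ with $T_k$, we then have $\Psi(\eta^0)\subset T_k$ if and only if $\Phi(\mu_{i_j}^0)\subset T_k$; in other words, membership depends only on the top element of the sub-chain.

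With this in hand, I set $A:=\{i\in\{0,\ldots,d\}:\Phi(\mu_i^0)\subset T_k\}$. The hypothesis $\sigma^0\cap\Psi^{-1}(T_k)=\varnothing$ translates to $d\notin A$. If $A=\varnothing$ no face $\eta$ of $\sigma$ satisfies $\Psi(\eta^0)\subset T_k$, so $\sigma\cap\Psi^{-1}(T_k)=\varnothing$, which is the first alternative. Otherwise let $m:=\max A$ and take $\tau_k$ to be the face of $\sigma$ with vertices $b_{\mu_0},\ldots,b_{\mu_m}$. Since $m<d$, $\tau_k$ is a proper face of $\sigma$. The sub-chain associated with $\tau_k$ has top $\mu_m$, so $\tau_k^0\subset\Psi^{-1}(T_k)$. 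Conversely, any face $\eta$ of $\sigma$ with $\Psi(\eta^0)\subset T_k$ corresponds to a sub-chain whose top index lies in $A$, hence is $\leq m$, so all vertices $b_{\mu_{i_\ell}}$ of $\eta$ are vertices of $\tau_k$, giving $\eta\subset\tau_k$. Taking the union of such $\eta^0$'s yields $\tau_k^0\subset\sigma\cap\Psi^{-1}(T_k)\subset\tau_k$, which is the second alternative.

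The principal technical step is the observation that $\Psi(\eta^0)\subset\Phi(\mu_{i_j}^0)$, where $\mu_{i_j}$ is the top of the subchain defining $\eta$. This is a routine convex-combination computation, but it is the geometric content carrying the whole proof: it allows the membership condition $\Psi(\eta^0)\subset T_k$ to be read off from a single integer, namely the top index of the corresponding sub-chain, and it is this linear-order structure that furnishes the distinguished face $\tau_k$.
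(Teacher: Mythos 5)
Your proof is correct and takes essentially the same route as the paper's: you describe each $\sigma\in L$ by its chain of simplices of $K$ (ascending, where the paper uses the equivalent descending indexing), locate the extremal barycenter that lies in $\Psi^{-1}(T_k)$, and take $\tau_k$ to be the corresponding face; the containments then follow from the fact that the open $K$-simplex carrying $\Psi(\eta^0)$ is determined by the top element of the subchain for $\eta$, which is the same combinatorial observation the paper packages as $[b_{\epsilon_0},\ldots,b_{\epsilon_d}]\setminus\epsilon_0^0=[b_{\epsilon_1},\ldots,b_{\epsilon_d}]$.
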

\begin{proof}[\sl Proof]
Write $\sigma:=[b_{\epsilon_0},\ldots,b_{\epsilon_d}]$ where $b_{\epsilon_i}$ is the barycenter of the simplex $\epsilon_i$ of $K$ and $\epsilon _{i+1}$ is a proper face of $\epsilon_i$ (see \cite[p.123]{sp}). Notice that $[b_{\epsilon_0},\ldots,b_{\epsilon_d}]\setminus\epsilon_{0}^0=[b_{\epsilon_1},\ldots,b_{\epsilon_d}]$ and so on. Assume $\sigma\cap\Psi^{-1}(T_k)\neq\varnothing$. As $\Psi^{-1}(T_k)$ is a finite union of open simplices of $K$ and the vertices of $\sigma$ are barycenters of simplices of $K$, there exists a first index $0\leq i\leq d$ such that $b_{\epsilon_i}\in\Psi^{-1}(T_k)$. Observe that $i\neq0$ because otherwise $\sigma\subset\epsilon_0\subset\Psi^{-1}(T_k)$, against the hypothesis $\sigma^0\cap\Psi^{-1}(T_k)=\varnothing$. Thus, $\tau_k:=[b_{\epsilon_i},\ldots,b_{\epsilon_d}]$ is a proper face of $\sigma$. We claim: \em $\tau_k$ satisfies $\tau_k^0\subset\sigma\cap\Psi^{-1}(T_k)\subset\tau_k$\em. 

Indeed, as $b_{\epsilon_j}\not\in T_k$ for $j=0,\ldots,i-1$, we deduce $\epsilon_j^0\cap\Psi^{-1}(T_k)=\varnothing$, so
$$
\sigma\cap\Psi^{-1}(T_k)\subset[b_{\epsilon_0},\ldots,b_{\epsilon_d}]\setminus\bigcup_{j=0}^{i-1}\epsilon_j^0=[b_{\epsilon_i},\ldots,b_{\epsilon_d}]=\tau_k.
$$
On the other hand, as $b_{\epsilon_i}\in T_k$, we have $\tau_k^0\subset\epsilon_i^0\subset T_k$, so $\tau_k^0\subset\sigma\cap\Psi^{-1}(T_k)$, as required.
\end{proof}

\begin{proof}[\sl Proof of Lemma \em\ref{crucialstep}]
The proof is conducted in several steps. We begin by proving the following:

\paragraph{}\label{required1}$\hspace{-1.5mm}$
\em For each $r\geq 1$ there exist pure dimensional closed semialgebraic subsets $M_1,\ldots,M_r$ of $M$ of dimension $d$ and a semialgebraic subset $C'$ of $C$ such that:
\begin{itemize}
\item[(1)] $M_i\cap Y=C'$ for each $i$ and $M_i\cap M_j=C'$ if $i\neq j$.
\item[(2)] $M_i\setminus Y=M_i\setminus C'$ is connected and dense in $M_i$.
\item[(3)] $\gtp\in\cl_{\Speca(M)}(C')$.
\end{itemize}\em

Indeed, as commented above, $Y$ is a semialgebraic subset of $M$ of dimension $\leq d-1$. Assume $M$ bounded and let $X:=\cl_{\R^m}(M)$. By Theorem \cite[9.2.1]{bcr} applied to $X$ and the family of semialgebraic sets ${\mathcal F}=\{T_1:=M,T_2:=Y,T_3:=C\}$ there exists a semialgebraic triangulation $(K,\Phi)$ of $X$ compatible with ${\mathcal F}$. After a barycentric subdivision, we may assume by Lemma \ref{trian} that for each $d$-dimensional simplex $\sigma$ of $K$ either $\sigma\cap T_k=\varnothing$ or there exists a proper face $\tau$ of $\sigma$ satisfying $\tau^0\subset\sigma\cap\Phi^{-1}(T_k)\subset\tau$ for $k=2,3$. We identify $|K|$ with $X$ and $\Phi^{-1}(T_k)$ with $T_k$. 

Let $\sigma_1,\ldots,\sigma_k$ be the $d$-dimensional simplices of $K$. Write $S_{\ell}:=\sigma_\ell\cap M$, which is a closed subset of $M$. As $M$ is pure dimensional, $M=\bigcup_{\ell=1}^kS_\ell$. Moreover, for each $\ell=1,\ldots,k$ either $C_{\ell}:=\sigma_\ell\cap C=S_{\ell}\cap C\subset S_{\ell}$ is empty or there exists a proper face $\tau_\ell$ of $\sigma_\ell$ such that $\tau_\ell^0\subset C_\ell\subset\tau_\ell$. In this latter case, $C_\ell=\tau_\ell\cap M$ is a closed subset of $M$. On the other hand 
$$
\text{$\Speca(M)=\bigcup_{\ell=1}^k\cl_{\Speca(M)}(S_\ell)$\quad and\quad $\cl_{\Speca(M)}(C)=\bigcup_{\ell=1}^k\cl_{\Speca(M)}(C_\ell)$.} 
$$
Assume $\gtp\in \cl_{\Speca(M)}(C_1)\subset \cl_{\Speca(M)}(S_1)$. Observe
$$
\dim(C_1)\leq\dim(C)\leq d-2=\dim(S_1)-2.
$$ 
Note that $\sigma_1^0\cap Y=\varnothing$ because $Y\in{\mathcal F}$ has dimension $\leq d-1$ and $(K,\Phi)$ is compatible with ${\mathcal F}$. Let $\upsilon_1$ be the proper face of $\sigma_1$ satisfying $\upsilon_1^0\subset\sigma_1\cap Y\subset\upsilon_1$; clearly, $\tau_1\subset\upsilon_1$. \em Let us construct $r$ simplices $\epsilon_1,\ldots,\epsilon_r\subset\sigma_1^0\cup\tau_1$ of dimension $d$ such that $\tau_1$ is a face of $\epsilon_i$, $\epsilon_i\cap\epsilon_j=\tau_1$ if $i\neq j$ and $\epsilon_i\cap Y=\epsilon_i\cap C_1=\tau_1\cap C_1$\em.

Indeed, let $\eta$ be the face of $\sigma_1$ generated by the vertices of $\sigma_1$ not contained in its face $\tau_1$. As $\dim(\tau_1)\leq\dim(\sigma_1)-2$, we have $e:=\dim(\eta)\geq 1$. We claim: $Y\cap\eta^0=\varnothing$. 

Otherwise, $\eta^0\subset Y\cap\sigma\subset\upsilon_1$ and as $\tau_1^0\subset C_1\subset Y\cap\sigma\subset\upsilon_1$ and $\upsilon_1$ is convex, we deduce $\upsilon_1\cap\sigma^0\neq\varnothing$, so $\sigma^0\subset\upsilon_1^0\subset Y$. This is a contradiction because $\dim(Y)\leq d-1$ and $\dim(\sigma)=d$.

Consider any collection $\{\eta_1,\ldots,\eta_r\}$ of pairwise disjoint simplices of dimension $e$ contained in $\eta^0$. A straightforward computation shows that the $d$-dimensional simplices $\epsilon_i$ generated by the vertices of $\tau_1$ and $\eta_i$ satisfy the desired conditions.

Now one proves readily that the semialgebraic sets $C':=C_1$ and $M_i:=\epsilon_i\cap M\subset S_1$ for $i=1,\ldots,r$ satisfy the required conditions in \ref{required1}. 

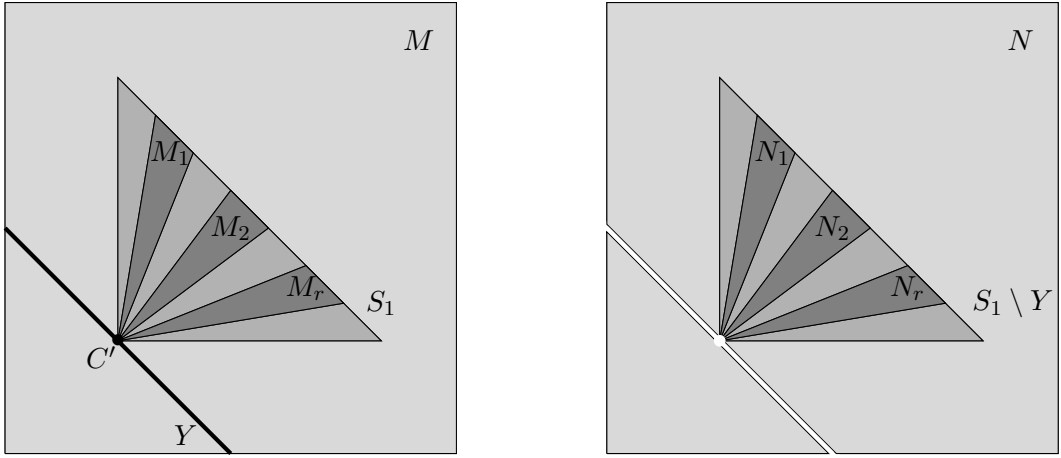
\begin{figure}[ht]
\centering
\begin{tikzpicture}

\draw (0,0) -- (6,0) -- (6,6) -- (0,6) -- (0,0);
\draw[fill=black!15!white] (0,0) -- (6,0) -- (6,6) -- (0,6) -- (0,0);

\draw (1.5,1.5) -- (1.5,5) -- (5,1.5) -- (1.5,1.5);
\draw[fill=black!30!white] (1.5,1.5) -- (1.5,5) -- (5,1.5) -- (1.5,1.5);

\draw (1.5,1.5) -- (2,4.5) -- (2.5,4) -- (1.5,1.5);
\draw[fill=black!50!white] (1.5,1.5) -- (2,4.5) -- (2.5,4) -- (1.5,1.5);

\draw (1.5,1.5) -- (3,3.5) -- (3.5,3) -- (1.5,1.5);
\draw[fill=black!50!white] (1.5,1.5) -- (3,3.5) -- (3.5,3) -- (1.5,1.5);

\draw (1.5,1.5) -- (4,2.5) -- (4.5,2) -- (1.5,1.5);
\draw[fill=black!50!white] (1.5,1.5) -- (4,2.5) -- (4.5,2) -- (1.5,1.5);

\draw[ultra thick] (3,0) -- (0,3);

\draw (1.5,1.5) node{$\bullet$};

\draw (5.5,5.5) node{$M$};
\draw (2.2,4) node{$M_1$};
\draw (3,3) node{$M_2$};
\draw (4,2.2) node{$M_r$};

\draw (5,2) node{$S_1$};
\draw (1.28,1.28) node{$C'$};
\draw (2.4,0.25) node{$Y$};

\draw (8,0) -- (10.95,0) -- (8,2.95) -- (8,0);
\draw[fill=black!15!white] (8,0) -- (10.95,0) -- (8,2.95) -- (8,0);

\draw (11.1,0) -- (14,0) -- (14,6) -- (8,6) -- (8,3.1) -- (11.1,0);
\draw[fill=black!15!white] (11.05,0) -- (14,0) -- (14,6) -- (8,6) -- (8,3.05) -- (11.05,0);

\draw (9.5,1.5) -- (9.5,5) -- (13,1.5) -- (9.5,1.5);
\draw[fill=black!30!white] (9.5,1.5) -- (9.5,5) -- (13,1.5) -- (9.5,1.5);

\draw (9.5,1.5) -- (10,4.5) -- (10.5,4) -- (9.5,1.5);
\draw[fill=black!50!white] (9.5,1.5) -- (10,4.5) -- (10.5,4) -- (9.5,1.5);

\draw (9.5,1.5) -- (11,3.5) -- (11.5,3) -- (9.5,1.5);
\draw[fill=black!50!white] (9.5,1.5) -- (11,3.5) -- (11.5,3) -- (9.5,1.5);

\draw (9.5,1.5) -- (12,2.5) -- (12.5,2) -- (9.5,1.5);
\draw[fill=black!50!white] (9.5,1.5) -- (12,2.5) -- (12.5,2) -- (9.5,1.5);

\draw [fill=white,white,ultra thick] (9.5,1.5) circle [radius=0.05];
\draw[white,ultra thick] (11,0) -- (8,3);

\draw (13.5,5.5) node{$N$};
\draw (10.2,4) node{$N_1$};
\draw (11,3) node{$N_2$};
\draw (12,2.2) node{$N_r$};

\draw (13.4,2) node{$S_1\setminus Y$};

\end{tikzpicture}
\caption{Construction of the semialgebraic sets $M_i$ and $N_i$.}
\end{figure}

\paragraph{}\label{required2}
Write $N_i:=N\cap M_i=M_i\setminus Y=M_i\setminus C'$ and ${\tt j}_i:N_i\hookrightarrow M_i$. It holds: \em $N_i$ is dense in $M_i$ and $N_i\cap N_j=\varnothing$ if $i\neq j$\em. Moreover, \em $N_i$ is closed in $N$ \em because $M_i$ is closed in $M$. By \ref{closure} $\Speca(N_i)\cong\cl_{\Speca(N)}(N_i)$, $\Speca(M_i)\cong\cl_{\Speca(M)}(M_i)$ for $i=1,\ldots,r$ and
$$
\Speca\Big(\bigsqcup_{i=1}^rN_i\Big)\cong\cl_{\Speca(N)}\Big(\bigsqcup_{i=1}^rN_i\Big).
$$
As the semialgebraic sets $N_i$ are pairwise disjoint closed connected subsets of $N$, the connected components of $\bigsqcup_{i=1}^rN_i$ are $N_1,\ldots,N_r$. By \ref{closure} \em the sets $\cl_{\Speca(N)}(N_i)$ are the connected components of 
$$
\cl_{\Speca(N)}\Big(\bigsqcup_{i=1}^rN_i\Big)
$$ 
\em and in particular they are disjoint. 

\paragraph{}
After the previous preparation we are ready to prove the statement:

(i) By Theorem \ref{main}(i) each map $\Speca({\tt j}_i):\Speca(N_i)\to\Speca(M_i)$ is surjective. Thus, the same happens to
$$
\Speca({\tt j})|_{\cl_{\Speca(N)}(N_i)}:\cl_{\Speca(N)}(N_i)\to\cl_{\Speca(M)}(M_i)\subset\Speca(M).
$$
Since $\gtp\in\cl_{\Speca(M)}(C')\subset\bigcap_{i=1}^r\cl_{\Speca(M)}(M_i)$, there exists a prime ideal $\gtq_i\in\cl_{\Speca(N)}(N_i)$ such that $\Speca({\tt j})(\gtq_i)=\gtp$ for each $i=1,\ldots,r$. Since the sets $\cl_{\Speca(M)}(N_i)$ are pairwise disjoint, $\gtq_i\not\subset\gtq_j$ and $\gtq_j\not\subset\gtq_i$ for $i\neq j$. As this holds for each $r\geq 1$, the fiber $\Speca({\tt j})^{-1}(\gtp)$ has infinitely many elements.

(ii) If $\gtp:=\gtm^*$ is a maximal ideal, let $\gtn_i^*$ be the unique maximal ideal of ${\mathcal S}^*(N)$ containing the prime ideal $\gtq_i$ constructed in (i) for $\gtm^*$. Note that $\gtn_i^*\in\cl_{\Speca(M)}(\{\gtq_i\})\subset\cl_{\Speca(M)}(N_i)$ and since $\cl_{\Speca(M)}(N_i)\cap \cl_{\Speca(M)}(N_j)=\varnothing$ if $i\neq j$, we conclude $\gtn_i^*\neq\gtn_j^*$ if $i\neq j$. As $\gtm^*$ is maximal and $\Speca({\tt j})(\gtq_i)=\gtm^*$, we deduce $\Speca({\tt j})(\gtn^*_i)=\gtm^*$. Thus, the fiber $\Speca({\tt j})^{-1}(\gtm^*)$ contains infinitely many maximal ideals.
\end{proof}
\begin{cor}\label{fiberspec2}
Let $(M,N,Y,{\tt j},{\tt i})$ be a {\tt sa}-tuple such that $M$ is pure dimensional of dimension $d$ and let $\gtP$ be a prime $z$-ideal of ${\mathcal S}(M)$ such that ${\tt d}_M(\gtP)\leq d-2$. Let $\gtq$ be a prime ideal of ${\mathcal S}^*(M)$ that contains $\gtp:=\gtP\cap{\mathcal S}^*(M)$. Then the fiber $\Speca({\tt j})^{-1}(\gtq)$ is an infinite set.
\end{cor}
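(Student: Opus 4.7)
The plan is to apply Lemma \ref{crucialstep}(i), and the task reduces to exhibiting a semialgebraic subset $C\subset Y$ of codimension $\geq 2$ in $M$ such that $\gtq\in\cl_{\Speca(M)}(C)$.

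First, the hypothesis ${\tt d}_M(\gtP)\leq d-2$ provides $f\in\gtP$ with $\dim Z(f)\leq d-2$. Setting $\widetilde{f}:=f/(1+|f|)\in{\mathcal S}^*(M)$, the factor $1/(1+|f|)$ is a unit of ${\mathcal S}(M)$, so $Z(\widetilde{f})=Z(f)$ and $\widetilde{f}\in\gtP\cap{\mathcal S}^*(M)=\gtp$. Since $\gtq\supset\gtp$, we deduce $\widetilde{f}\in\gtq$; moreover $Z(\widetilde{f})$ is a closed semialgebraic subset of $M$ of dimension $\leq d-2$, that is, of codimension $\geq 2$ in the pure $d$-dimensional set $M$.

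Next, I would take as candidate $C:=Z(\widetilde{f})\cap\cl_M(Y)$. This is a closed semialgebraic subset of $\cl_M(Y)$ of dimension $\leq d-2$. Since $\cl_M(Y)\setminus Y$ has dimension strictly smaller than $\dim Y$ and the $\Speca(M)$-closure of a semialgebraic set coincides with that of its $M$-closure, we may, after intersecting with $Y$, work with a semialgebraic subset of $Y$ of codimension $\geq 2$ in $M$ without affecting the relevant closure.

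The decisive step is to verify $\gtq\in\cl_{\Speca(M)}(C)$. Here $\widetilde{f}\in\gtq$ places $\gtq$ in $\Zz_{\Speca(M)}(\widetilde{f})$, while the containment $\gtq\supset\gtp$ together with the fact that $\gtp$ arises from the prime $z$-ideal $\gtP$ forces $\gtq\in\cl_{\Speca(M)}(\cl_M(Y))$. Applying property \ref{closure}(iii) to the closed sets $Z(\widetilde{f})$ and $\cl_M(Y)$, combined with the $z$-ideal characterization of closure in \ref{closure}(i), then yields $\gtq\in\cl_{\Speca(M)}(Z(\widetilde{f})\cap\cl_M(Y))=\cl_{\Speca(M)}(C)$. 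With this in hand, Lemma \ref{crucialstep}(i) applies directly and produces infinitely many prime ideals in $\Speca({\tt j})^{-1}(\gtq)$.

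The main obstacle is the closure identification in the decisive step: for primes of ${\mathcal S}^*(M)$, the containment of a single element $\widetilde{f}$ does not in general force the prime into the $\Speca(M)$-closure of the geometric zero set $Z(\widetilde{f})$. The $z$-ideal structure of $\gtP$, which guarantees that every function with zero set containing $Z(f)$ already belongs to $\gtP$, and hence to $\gtp\subset\gtq$, is exactly what upgrades the algebraic containment $\widetilde{f}\in\gtq$ to the required closure assertion at the spectral level.
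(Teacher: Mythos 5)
Your first half reproduces the paper's proof: choose $f\in\gtP$ with $\dim(Z(f))={\tt d}_M(\gtP)\leq d-2$, observe that since $\gtP$ is a prime $z$-ideal every $g$ with $Z(f)\subset Z(g)$ lies in $\gtP$, hence in $\gtp\subset\gtq$, so that by \ref{closure}(i) and \ref{inequality} one gets $\gtq\in\cl_{\Speca(M)}(Z(f))$. The paper then simply sets $C:=Z(f)$ and invokes Lemma \ref{crucialstep}. Where you diverge is in trying to verify the hypothesis $C\subset Y$ of that lemma by replacing $Z(f)$ with $Z(f)\cap\cl_M(Y)$, and this is exactly where your argument has a genuine gap.

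The decisive step rests on the claim that $\gtq\in\cl_{\Speca(M)}(\cl_M(Y))$, which you attribute to ``$\gtq\supset\gtp$ together with the fact that $\gtp$ arises from the prime $z$-ideal $\gtP$.'' This does not follow: the $z$-ideal property of $\gtP$ upgrades $f\in\gtq$ to $\gtq\in\cl_{\Speca(M)}(Z(f))$, but it says nothing about $Y$, which does not appear in the hypotheses on $\gtP$ or $\gtq$ at all. No argument can extract this membership from the stated hypotheses alone: if $\gtq\notin\cl_{\Speca(M)}(Y)$ the fiber is a singleton by Theorem \ref{main}(iii), so the corollary is implicitly being read in a context where $\gtq$ already lies over $Y$ (as it does in all of the paper's applications), and that is information your proof would have to import rather than derive. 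A secondary problem: even granting $\gtq\in\cl_{\Speca(M)}(Z(f))\cap\cl_{\Speca(M)}(\cl_M(Y))=\cl_{\Speca(M)}(Z(f)\cap\cl_M(Y))$, your dimension remark does not let you pass to $C:=Z(f)\cap Y$, because $\cl_M(Z(f)\cap Y)$ may be strictly smaller than $Z(f)\cap\cl_M(Y)$ (the former can even be empty while the latter is not), and $\gtq$ could lie in the closure of the difference. So the concrete gap is that the membership $\gtq\in\cl_{\Speca(M)}(C)$ for some $C\subset Y$ of codimension $\geq 2$ is asserted rather than proved; the paper avoids the issue only by applying Lemma \ref{crucialstep} directly with $C=Z(f)$, without checking $C\subset Y$.
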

\begin{proof}[\sl Proof]
Let $f\in\gtP$ be such that $\dim(Z(f))={\tt d}_M(\gtP)$ and let $C:=Z(f)$. Since $\gtP$ is a prime $z$-ideal, we deduce by \ref{closure}(i) that $\gtP\in\cl_{\Specs(M)}(C)$; hence, $\gtp\in\cl_{\Speca(M)}(C)$ by \ref{inequality}. Thus, also $\gtq\in\cl_{\Speca(M)}(C)$ and one can apply Lemma \ref{crucialstep}.
\end{proof}

\begin{proof}[\sl Proof of Theorem \em\ref{main2}] 
(i) By Theorem \ref{main}(iii) the proof of this statement (and its counterpart in (iv)) is reduced to prove the following:

\paragraph{}$\hspace{-1.5mm}$ \em Let $p\in Y$ be such that $\dim_p(M)\geq 2$. Then the fiber $\Speca({\tt j})^{-1}(\gtm_p^*)$ contains infinitely many maximal ideals of ${\mathcal S}^*(N)$.\em

Fix $s\geq 1$. Since $\dim_p(M)\geq 2$, there exist by the curve selection lemma \cite[2.5.5]{bcr} semialgebraic paths $\alpha_i:[0,1]\to\R^{n}$ for $i=1,\ldots,s$ such that $\alpha_i(0)=p$, $\alpha_i((0,1])\subset N$ and $\alpha_i((0,1])\cap\alpha_j((0,1])=\varnothing$ if $i\neq j$. Thus, the maximal ideals of ${\mathcal S}^*(N)$ given by $\gtn_i^*:=\{f\in{\mathcal S}^*(N):\,\lim_{t\to0}(f\circ\alpha_i)(t)=0\}$ are different. 

Note that $\gtn_i^*\cap{\mathcal S}^*(M)=\gtm^*_p$ because $f(p)=\lim_{t\to0}(f\circ\alpha_i)(t)=0$ for every $f\in\gtn_i^*\cap{\mathcal S}^*(M)$, so $\gtn_i^*\in \Speca({\tt j})^{-1}(\gtm_p^*)$ for $i=1,\ldots,s$. As this holds for each $s\geq1$, the fiber $\Speca({\tt j})^{-1}(\gtm_p^*)$ contains infinitely many maximal ideals.

(ii) The proof of this statement (and its counterpart in (iv)) is reduced to prove the following:

\paragraph{}$\hspace{-1.5mm}$ \em Suppose that $\dim_p(Y)\leq\dim_p(M)-2$ for all $p\in Y$ and let $\gtp\in\cl_{\Speca(M)}(Y)$. Then the fiber $\Speca({\tt j})^{-1}(\gtp)$ is an infinite set. Moreover, if $\gtp=\gtm^*$ is a maximal ideal of ${\mathcal S}^*(M)$, then its fiber $\Speca({\tt j})^{-1}(\gtm^*)$ contains infinitely many maximal ideals of ${\mathcal S}^*(N)$\em.

Let $\bs_M:=\{\bs_i(M)\}_{i=1}^r$ be the family of bricks of $M$, see \ref{bricks}. Denote $Y_i:=\bs_i(M)\cap Y$. As $Y=\bigcup_{i=1}^rY_i$, it holds 
$$
\cl_{\Speca(M)}(Y)=\bigcup_{i=1}^r\cl_{\Speca(M)}(Y_i).
$$ 

Define $I:=\{1,\ldots,r\}$ and $J:=\{j\in I:\,\gtp\in\cl_{\Speca(M)}(\bs_j(M))\}$. For every $i\in I\setminus J$ there exists by \ref{closure}(i) $f_i\in{\mathcal S}^*(M)\setminus\gtp$ such that $Z(f_i)=\bs_i(M)$. Let $f:=\prod_{i\in I\setminus J}f_i\in{\mathcal S}^*(M)\setminus\gtp$. Then
\begin{equation*}
\gtp\in\Dd_{\Speca(M)}(f)\cap\bigcap_{j\in J}\cl_{\Speca(M)}(Y_j)=\Dd_{\Speca(M)}(f)\cap\bigcap_{j\in J}\cl_{\Speca(M)}(\cl_M(D(f)\cap Y_j)).
\end{equation*}

Denote $j_0:=\min(J)$ and $C:=\cl_M(D(f)\cap Y_{j_0})$. We claim: $\dim(C)\leq\dim(\bs_{j_0}(M))-2.$

Indeed, observe $D(f)=D(f)\cap\bigcup_{j\in J}\bs_j(M)$. Since $\dim_p(Y)\leq \dim_p(M)-2$ for all $p\in Y$ and $\dim(\bs_{j_0}(M))\geq\dim(\bs_j(M))$ for all $j\in J$, we deduce for all $p\in D(f)\cap Y$
$$
\dim_p(Y_{j_0})\leq\dim_p(Y)\leq\dim_p(M)-2=\dim_p\Big(\bigcup_{j\in J}\bs_j(M)\Big)-2=\dim_p(\bs_{j_0}(M))-2. 
$$
We conclude $\dim(C)=\dim(D(f)\cap Y_{j_0})\leq\dim(\bs_{j_0}(M))-2$.

Now, since $\gtp\in\cl_{\Speca(M)}(C)\subset\cl_{\Speca(M)}(\bs_{j_0}(M))\cong\Speca(\bs_{j_0}(M))$, we deduce that $\Speca({\tt j})^{-1}(\gtp)$ is by Lemma \ref{crucialstep} an infinite set. Moreover, if $\gtp=\gtm^*$ is in addition a maximal ideal of ${\mathcal S}^*(M)$, its fiber contains by Lemma \ref{crucialstep} infinitely many maximal ideals, as required.

(iii) (and the remaining part of (iv)) Since $N$ is dense in $M$, we have
$$
\dim(Y)=\dim(\cl_M(N)\setminus N)<\dim(M)=1.
$$ 
Thus, $Y$ is a finite set and $\cl_{\Speca(M)}(Y)=Y$. Moreover, $\partial N$ is by \cite[5.17]{fg5} also a finite set. To finish we must show $\Speca({\tt j})^{-1}(Y)\subset\partial N$. 

Let $p\in Y$ and $\gtq\in\Speca({\tt j})^{-1}(\gtm_p^*)$. Notice that \em $\gtq$ is not a minimal prime ideal of ${\mathcal S}^*(N)$ \em because otherwise $\gtm^*_p$ would be by Theorem \ref{minq}(i) a minimal prime ideal of ${\mathcal S}^*(M)$, against Theorem \ref{minimalnlc}. Since $N$ is one dimensional, each prime ideal of ${\mathcal S}^*(N)$ is either minimal or maximal (but not both, see \cite[7.3]{fe1}). Thus, $\gtq$ is a maximal ideal of ${\mathcal S}^*(N)$ and it only remains to check that it is free. Otherwise $\gtq=\gtm^*_q$ for some point $q\in N$, so $\gtm^*_p=\Speca({\tt j})(\gtq)=\Speca({\tt j})(\gtm^*_q)=\gtm^*_{{\tt j}(q)}=\gtm^*_q$, which is wrong because $p\in Y=M\setminus N$. 
\end{proof}

\subsection{Size of the fibers of a {\tt sa}-tuple}
Let $(M,N,Y,{\tt j},{\tt i})$ be a {\tt sa}-tuple and $\gtp$ a prime ideal of ${\mathcal S}^*(M)$. To compute the size of the fiber $\Speca({\tt j})^{-1}(\gtp)$ we proceed as follows. 

\subsubsection{Reduction to the case in which $M$ is pure dimensional}
Let $\bs_N$ and $\bs_M$ be the families of bricks of $N$ and $M$. By \ref{bricks} we know
\begin{itemize}
\item[(i)] $\bs_M:=\{\bs_i(M)=\cl_M(\bs_i(N))\}_i$,
\item[(ii)] $\bs_N:=\{\bs_i(N)=\bs_i(M)\cap N\}_i$.
\end{itemize}
Thus, $N_i:=\bs_i(N)$ is dense in $M_i:=\bs_i(M)$, so $(N_i,M_i,Y_i,{\tt j}_i,{\tt i}_i)$ is a {\tt sa}-tuple.

Moreover, since $\Speca({\tt j})$ is continuous, 
$$
\Speca({\tt j})(\cl_{\Speca(N)}(N_i))\subset\cl_{\Speca(M)}({\tt j}(N_i))=\cl_{\Speca(M)}(\cl_M(N_i))=\cl_{\Speca(M)}(M_i).
$$
Moreover, $\Speca(N)=\bigcup_{i=1}^r\cl_{\Speca(N)}(N_i)$ and $\Speca(M)=\bigcup_{i=1}^r\cl_{\Speca(M)}(M_i)$. In addition, by \ref{closure}(ii) $\cl_{\Speca(N)}(N_i)\cong\Speca(N_i)$ (because $N_i$ is closed in $N$) and $\cl_{\Speca(M)}(M_i)\cong\Speca(M_i)$ (because $M_i$ is closed in $M$). Thus, for our purposes it is enough to compute the size of the fibers of the spectral maps $\Speca({\tt j}_i):\Speca(N_i)\to\Speca(M_i)$ corresponding to the suitably arranged {\tt sa}-tuples $(N_i,M_i,Y_i,{\tt j}_i,{\tt i}_i)$.

So we assume in the following that $M$ is pure dimensional.

\subsubsection{Reduction to the case in which $N$ is locally compact}\label{red1} 

By Corollary \ref{rho2} it holds that $\cl_M(\rho_1(N))$ is a semialgebraic subset of $M$ of (local) codimension $\geq2$; hence, $C:=\cl_M(\rho_1(N))\cap\cl_M(Y)$ is a closed semialgebraic subset of $\cl_M(Y)$ that has (local) codimension $\geq2$ in $M$. Denote $Z_1:=\cl_{\Speca(M)}(Y)$ and $T:=\cl_{\Speca(M)}(\rho_1(N))$. By \ref{closure}
$$
\cl_{\Speca(M)}(C)=\cl_{\Speca(M)}(\cl_M(\rho_1(N)))\cap\cl_{\Speca(M)}(\cl_M(Y))
=T\cap Z_1.
$$
By Theorem \ref{main}(ii) it holds
$$
\Speca({\tt j})^{-1}(T\setminus Z_1)=\cl_{\Speca(N)}(\rho_1(N))\setminus\Speca({\tt j})^{-1}(Z_1).
$$
If $\gtp\in\cl_{\Speca(M)}(C)$, we know by Lemma \ref{crucialstep} that $\Spec({\tt j})^{-1}(\gtp)$ is an infinite set. Thus, by Theorem \ref{main2}(ii) we conclude that if $\gtp\in T$, the fiber
\begin{equation}\label{size0}
\Speca({\tt j})^{-1}(\gtp)\text{ is }
\left\{
\begin{array}{ll}
\!\!\text{a singleton}&\text{if $\gtp\in T\setminus Z_1$,}\\[4pt]
\!\!\text{an infinite set}&\text{if $\gtp\in T\cap Z_1$.}
\end{array}
\right.
\end{equation}
So it remains to determine the size of the fiber of a prime ideal $\gtp\in\Speca(M)\setminus T$. Consider the {\tt sa}-tuple $(N,N_{\lc},\rho_1(N),{\tt j}_2,{\tt i}_2)$ and denote $Z_2:=\cl_{\Speca(N)}(\rho_1(N))$. By Theorem \ref{main}(iii) the restriction map
$$
\Speca({\tt j}_2)|:\Speca(N_{\lc})\setminus\Speca({\tt j}_2)^{-1}(Z_2)\to\Speca(N)\setminus Z_2
$$
is a homeomorphism. By Theorem \ref{main}(ii) $\Speca({\tt j})(Z_2)=T$, so $Z_2\subset\Speca({\tt j})^{-1}(T)$ and consequently $\Speca({\tt j}_2)^{-1}(Z_2)\subset\Speca({\tt j}\circ{\tt j}_2)^{-1}(T)$. Thus, the restriction map
\begin{equation}\label{rest2}
\Speca({\tt j}_2)|:\Speca(N_{\lc})\setminus\Speca({\tt j}\circ{\tt j}_2)^{-1}(T)\to\Speca(N)\setminus\Speca({\tt j})^{-1}(T)
\end{equation} 
is also a homeomorphism. We have the following commutative diagrams
\begin{equation}\label{remate}
\xymatrix{
N\ar@{^{(}->}[r]^{{\tt j}}&M\\
N_{\lc}\ar@{^{(}->}[u]^{{\tt j}_2}\ar@{^{(}->}[ur]_{{\tt j}\circ{\tt j}_2}
}
\quad\begin{array}{c} \\[20pt]\leadsto\end{array}\quad
\xymatrix{
\Speca(N)\setminus\Speca({\tt j})^{-1}(T)\ar[rr]^(0.575){\Speca({\tt j})|}&&\Speca(M)\setminus T\\
\Speca(N_{\lc})\setminus\Speca({\tt j}\circ{\tt j}_2)^{-1}(T)\ar[u]^{{\Speca(\tt j}_2)|}_{\cong}\ar[urr]_{\Speca({\tt j}\circ{\tt j}_2)|}
}
\end{equation}
Therefore, for our purposes it is enough to determine the size of the fibers of the spectral map induced by the suitably arranged {\tt sa}-tuple $(M,N_{\lc},Y':=M\setminus N_{\lc},{\tt j}_3:={\tt j}\circ{\tt j}_2,{\tt i}_3)$. Indeed, for the prime ideals $\gtp\in T$ we have already computed the size of the fiber $\Speca({\tt j})^{-1}(\gtp)$ in \eqref{size0} and if $\gtp\in\Speca(M)\setminus T$, we know, as the restriction map in \eqref{rest2} is a homeomorphism, that the fiber $\Speca({\tt j})^{-1}(\gtp)$ is homeomorphic to the fiber $\Speca({\tt j}_3)^{-1}(\gtp)$ (see diagram \eqref{remate}).

So we assume that $N$ is locally compact and we are reduced to study the case of a suitably arranged {\tt sa}-tuple $(M,N,Y,{\tt j},{\tt i})$ such that $M$ is pure dimensional. This case is fully studied in Theorem \ref{fiberspec} that we prove in the next section.

\section{Proof of Theorem \ref{fiberspec}}\label{s5}

In this section we prove Theorem \ref{fiberspec}. Its proof is quite involved and requires some preliminary results. In the following $(M,N,Y,{\tt j},{\tt i})$ denotes a suitably arranged {\tt sa}-tuple such that $M$ is pure dimensional of dimension $d$. In particular, in the following $Y$ is a closed subset of $M$.

\subsection{Preliminary results}
Recall that ${\mathcal W}_M$ is the multiplicative set of those functions $f\in{\mathcal S}^*(M)$ such that $Z(f)=\varnothing$ and ${\mathcal E}_M$ is the multiplicative set of those $f\in{\mathcal S}(M)$ such that $Z(f)=M\setminus M_{\lc}$. Denote $Z_1:=\cl_{\Speca(M)}(\rho_1(M))$. 

\begin{lem}\label{e}
Let $\gtp\in\cl_{\Speca(M)}(C)\setminus Z_1$ where $C$ is a closed semialgebraic subset of $M$. Then
\begin{itemize}
\item[(i)] The threshold $\widehat{\gtp}$ of $\gtp$ in ${\mathcal S}^*(M)$ defined in \eqref{gtQ} is univocally determined by $\gtp$. In addition, if $\gtp\cap{\mathcal W}_M=\varnothing$ but $\gtp\cap{\mathcal E}_M\neq\varnothing$, there exists a maximal ideal $\gtm_1$ of ${\mathcal S}(M_{\lc})$ such that $\widehat{\gtp}=\gtm_1\cap{\mathcal S}^*(M)$.
\item[(ii)] $\widehat{\gtp}\in\cl_{\Speca(M)}(C)$ and $\widehat{\gtp}{\mathcal S}(M)\in\cl_{\Specs(M)}(C)$.
\item[(iii)] Every non-refinable chain of prime ideals of ${\mathcal S}^*(M)$ through $\gtp$ contains also $\widehat{\gtp}$.
\item[(iv)] $\widehat{\gtp}{\mathcal S}(M)$ is a $z$-ideal. 
\item[(v)] If ${\tt d}_M(\widehat{\gtp}{\mathcal S}(M))\leq d-2$, the fiber $\Speca({\tt j})^{-1}(\gtp)$ is an infinite set.
\end{itemize}
\end{lem}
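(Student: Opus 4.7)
My plan is to prove the five parts in order, leaning on the chain structure of \ref{chainp}, the bijection $\mathfrak{i}_M$ of \ref{inequality}, and the homeomorphism $\Phi\colon\betas M\to\betaa M$ of \ref{spectracomp}. For (i), I split on whether $\gtp\cap{\mathcal W}_M$ is empty. If $\gtp\cap{\mathcal W}_M\neq\varnothing$, the unique maximal ideal $\gtm^*$ of ${\mathcal S}^*(M)$ containing $\gtp$ determines $\gtm$ and hence $\widehat{\gtp}=\gtm\cap{\mathcal S}^*(M)$ univocally. If $\gtp\cap{\mathcal W}_M=\varnothing$, existence of $\ol{\gtp}$ comes from Theorem \ref{minimalnlc} combined with Corollary \ref{rho2}, which preclude minimal primes of ${\mathcal S}^*(M)$ from containing any $f\in{\mathcal E}_M$; uniqueness I would handle by extending two candidates to non-refinable chains through $\gtp$ and comparing via \ref{chainp}(i)--(ii). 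For the ``in addition'' part, I would use the injective restriction $r\colon{\mathcal S}^*(M)\hookrightarrow{\mathcal S}(M_{\lc})$ (well-defined because $M_{\lc}$ is dense in $M$): each $f\in{\mathcal E}_M$ is a unit of ${\mathcal S}(M_{\lc})$, so primes of ${\mathcal S}(M_{\lc})$ contract to primes of ${\mathcal S}^*(M)$ disjoint from ${\mathcal E}_M$, and maximal ideals of ${\mathcal S}(M_{\lc})$ correspond precisely to the thresholds $\ol{\gtp}$.

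For (ii), in case 1 the containment $\gtm^*\supset\gtp\supset\ker({\mathcal S}^*(M)\to{\mathcal S}^*(C))$ gives $\gtm^*\in\cl_{\betaa M}(C)$; since $\Phi$ fixes $C\subset M$ pointwise, the homeomorphism $\Phi^{-1}$ sends $\cl_{\betaa M}(C)$ onto $\cl_{\betas M}(C)$, so $\gtm\in\cl_{\Specs(M)}(C)$, and contracting yields both $\widehat{\gtp}\in\cl_{\Speca(M)}(C)$ and $\widehat{\gtp}{\mathcal S}(M)=\gtm\in\cl_{\Specs(M)}(C)$. Case 2 with $\gtp\cap{\mathcal E}_M=\varnothing$ gives $\ol{\gtp}=\gtp$ and is immediate; with $\gtp\cap{\mathcal E}_M\neq\varnothing$ I would run the same homeomorphism argument inside ${\mathcal S}(M_{\lc})$ using the identification from (i). For (iii), case 1 is exactly \ref{chainp}(ii); in case 2, given a non-refinable chain $\gtq_0\subsetneq\cdots\subsetneq\gtq_r$ through $\gtp$, I would let $j$ be the largest index with $\gtq_j\cap{\mathcal E}_M=\varnothing$ (which exists since $\gtq_0$ is minimal) and use non-refinability together with the fact that primes above $\gtq_j$ form a chain to conclude that every prime strictly containing $\gtq_j$ contains $\gtq_{j+1}$ and hence meets ${\mathcal E}_M$; thus $\gtq_j$ satisfies the defining conditions of $\ol{\gtp}$ and by the uniqueness in (i) equals $\widehat{\gtp}$.

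For (iv), case 1 is immediate from $\widehat{\gtp}{\mathcal S}(M)=\gtm$ being maximal. In case 2, the identification $\widehat{\gtp}{\mathcal S}(M)=r^{-1}(\gtm_1)$ (from \ref{inequality} and (i)) transfers the $z$-ideal property of $\gtm_1$, since $Z_M(f)\subset Z_M(g)$ restricts to $Z_{M_{\lc}}(f|_{M_{\lc}})\subset Z_{M_{\lc}}(g|_{M_{\lc}})$. Finally (v) follows by setting $\gtP:=\widehat{\gtp}{\mathcal S}(M)$: by (iv) it is a prime $z$-ideal, and $\gtP\cap{\mathcal S}^*(M)=\widehat{\gtp}\subset\gtp$ (using $\widehat{\gtp}\cap{\mathcal W}_M=\varnothing$ in both cases together with the bijection of \ref{inequality}), so Corollary \ref{fiberspec2} applied with $\gtq:=\gtp$ yields that $\Speca({\tt j})^{-1}(\gtp)$ is infinite. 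The main obstacle I anticipate is uniqueness of $\ol{\gtp}$ in case 2 of (i), since primes contained in $\gtp$ need not form a chain; extending both candidates to non-refinable chains through $\gtp$ and invoking the rigidity from \ref{chainp} is the natural remedy.
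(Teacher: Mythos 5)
Your overall strategy---split on whether $\gtp$ meets ${\mathcal W}_M$, transfer the ${\mathcal E}_M$-condition to $M_{\lc}$, and feed the resulting prime $z$-ideal into Corollary \ref{fiberspec2}---matches the paper's, and your direct application of Corollary \ref{fiberspec2} to the original tuple in (v) is a legitimate shortcut. The genuine gap is the one you flag yourself: uniqueness of $\ol{\gtp}$ when $\gtp\cap{\mathcal W}_M=\varnothing$. Comparing two candidates through non-refinable chains via \ref{chainp}(i)--(ii) cannot work, because those statements concern the multiplicative set ${\mathcal W}_M$: they rigidify only the segment of the chain from $\gtm\cap{\mathcal S}^*(M)$ up to $\gtm^*$, and in this case \ref{chainp}(i) forces $\gtp\subseteq\gtm\cap{\mathcal S}^*(M)$, so both candidates $\ol{\gtp}_1,\ol{\gtp}_2$ sit strictly below the rigid segment --- precisely where different non-refinable chains through $\gtp$ may diverge, and where the two candidates need not even be comparable (primes \emph{contained in} $\gtp$ do not form a chain). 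The remedy, which you sketch only for the ``in addition'' clause, must in fact carry all of (i): since $\gtp\notin Z_1$ and $Z_1$ is closed, every prime $\gtq\subseteq\gtp$ also lies outside $Z_1$, hence by Theorem \ref{main}(iii) has a unique preimage $\gtq_1$ in $\Speca(M_{\lc})$; one checks that $\gtq\cap{\mathcal E}_M\neq\varnothing$ if and only if $\gtq_1\cap{\mathcal W}_{M_{\lc}}\neq\varnothing$, and then \ref{chainp}(i) applied in ${\mathcal S}^*(M_{\lc})$ identifies the threshold as the contraction of the canonical $\gtm_1\cap{\mathcal S}^*(M_{\lc})$, which is univocally determined. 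Note that your write-up never invokes the hypothesis $\gtp\notin Z_1$; it is indispensable here.

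Two smaller gaps. In (iv) your ``case 2'' treats only $\gtp\cap{\mathcal E}_M\neq\varnothing$; when $\gtp\cap{\mathcal E}_M=\varnothing$ there is no maximal ideal $\gtm_1$, and one must instead show that $\gtp{\mathcal S}(M_{\lc})$ is a prime ideal contracting to $\gtp{\mathcal S}(M)$ (Theorem \ref{mins} together with \ref{inequality}), that it is a $z$-ideal because $M_{\lc}$ is locally compact (\ref{1B}), and then transfer the property back via \ref{zideal3}. In (ii), case $\gtp\cap{\mathcal E}_M\neq\varnothing$, ``running the same argument inside ${\mathcal S}(M_{\lc})$'' requires the nontrivial input $\gtp_1\in\cl_{\Speca(M_{\lc})}(C\cap M_{\lc})$; this again uses $\gtp\notin Z_1$ to produce $h\in{\mathcal S}^*(M)\setminus\gtp$ with $Z(h)=\rho_1(M)$ and the factorization $C\subset Z(gh)$ for $g$ vanishing exactly on $\cl_M(C\setminus\rho_1(M))$, after which one pushes the conclusion forward along $\Speca({\tt j}_1)$.
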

\begin{proof}[\sl Proof]
Consider the auxiliary suitably arranged {\tt sa}-tuples 
$$
(M,M_{\lc},\rho_1(M),{\tt j}_1,{\tt i}_1)\quad\text{ and }\quad(M_{\lc},N,Y_2:=M_{\lc}\setminus N,{\tt j}_2,{\tt i}_2).
$$
Note that $N\subset M_{\lc}$ because $N$ is locally compact and dense in $M$, $\rho_1(M)\subset Y$ and ${\tt j}={\tt j}_1\circ{\tt j}_2$. By Theorem \ref{main}(iii) the restriction map
$$
\Speca({\tt j}_1)|:\Speca(M_{\lc})\setminus\Speca({\tt j})^{-1}(Z_1)\to\Speca(M)\setminus Z_1
$$
is a homeomorphism. Write $\Speca({\tt j}_1)^{-1}(\gtp)=\{\gtp_1\}$. The size of the fiber $\Speca({\tt j})^{-1}(\gtp)$ coincides with the one of $\Speca({\tt j}_2)^{-1}(\gtp_1)$ because they are homeomorphic sets. Thus, to prove statement (v) we are reduced to prove that $\Speca({\tt j}_2)^{-1}(\gtp_1)$ is an infinite set.

We prove all statements simultaneously by distinguishing two cases:

\noindent{\bf Case 1.} If $\gtp\cap{\mathcal W}_M\neq\varnothing$, it is clear that $\widehat{\gtp}:=\gtm\cap{\mathcal S}^*(M)$ is univocally determined by $\gtp$. By \ref{chainp}(iii) $\gtm\in\cl_{\Specs(M)}(C)$ and $\gtm\cap{\mathcal S}^*(M)\in\cl_{\Speca(M)}(C)$ and by \ref{chainp}(ii) every non-refinable chain of prime ideals of ${\mathcal S}^*(M)$ containing $\gtp$ contains also $\widehat{\gtp}$. Moreover, $\widehat{\gtp}{\mathcal S}(M)=\gtm$ is a prime $z$-ideal because it is maximal. It only remains to prove (v).
 
\paragraph{}\label{4a1} We claim: \em $\gtm{\mathcal S}(M_{\lc})$ is a prime ideal of ${\mathcal S}(M_{\lc})$ that satisfies $\gtm{\mathcal S}(M_{\lc})\cap{\mathcal S}(M)=\gtm$\em. 

Let us prove first that $\gtm\cap{\mathcal E}_M=\varnothing$. Indeed, as $\gtp\not\in Z_1$, we deduce $\gtm\cap{\mathcal S}^*(M)\not\in Z_1$; hence, by \ref{closure}(i) $\gtm\not\in\cl_{\Specs(M)}(\rho_1(M))$. As $\gtm$ is a prime $z$-ideal (because it is maximal), $\gtm\cap{\mathcal E}_M=\varnothing$. Now our claim follows from Theorem \ref{mins}.

\paragraph{}As $\widehat{\gtp}\not\in Z_1$, the fiber $\Speca({\tt j}_1)^{-1}(\widehat{\gtp})$ is a singleton $\{\widehat{\gtp}_1\}$. As $\widehat{\gtp}\subset\gtp$, we deduce by Theorem \ref{minq}(iv) that $\widehat{\gtp}_1\subset\gtp_1$. We claim $\widehat{\gtp}_1=\gtm{\mathcal S}(M_{\lc})\cap{\mathcal S}^*(M_{\lc})$. 

Indeed,
$$
\gtm{\mathcal S}(M_{\lc})\cap{\mathcal S}^*(M_{\lc})\cap{\mathcal S}^*(M)=\gtm{\mathcal S}(M_{\lc})\cap{\mathcal S}(M)\cap{\mathcal S}^*(M)=\gtm\cap{\mathcal S}^*(M)=\widehat{\gtp}.
$$

As ${\tt d}_{M_{\lc}}(\gtm{\mathcal S}(M_{\lc}))\leq{\tt d}_M(\gtm)\leq d-2$, we conclude by Corollary \ref{fiberspec2} that $\Speca({\tt j}_2)^{-1}(\gtp_1)$ is an infinite set, as wanted.

\noindent{\bf Case 2.} If $\gtp\cap{\mathcal W}_M=\varnothing$ and $\gtp\cap{\mathcal E}_M=\varnothing$, we have $\widehat{\gtp}=\gtp$, so it is univocally determined by $\gtp$ (and it also holds (iii)). By Theorem \ref{mins} and \ref{inequality} $\gtp{\mathcal S}(M_{\lc})$ is a prime ideal of ${\mathcal S}(M_{\lc})$ that satisfies $\gtp{\mathcal S}(M_{\lc})\cap{\mathcal S}(M)=\gtp{\mathcal S}(M)$. As $M_{\lc}$ is locally compact, $\gtp{\mathcal S}(M_{\lc})$ is a prime $z$-ideal, so by \ref{zideal3} $\gtp{\mathcal S}(M)$ is also a prime $z$-ideal. Finally, if ${\tt d}_M(\gtp{\mathcal S}(M))={\tt d}_M(\widehat{\gtp}{\mathcal S}(M))\leq d-2$, the fiber $\Speca({\tt j})^{-1}(\gtp)$ is by Corollary \ref{fiberspec2} an infinite set, so this situation is completely approached.

Assume next $\gtp\cap{\mathcal W}_M=\varnothing$ and $\gtp\cap{\mathcal E}_M\neq\varnothing$. As ${\mathcal E}_M\subset{\mathcal W}_{M_{\lc}}$ and $\gtp_1\cap{\mathcal S}^*(M)=\gtp$, we have $\gtp_1\cap{\mathcal W}_{M_{\lc}}\neq\varnothing$. Let ${\gtm_1}^*$ be the unique maximal ideal of ${\mathcal S}^*(M_{\lc})$ that contains $\gtp_1$ and let $\gtm_1$ be the unique maximal ideal of ${\mathcal S}(M_{\lc})$ such that $\gtm_1\cap{\mathcal S}^*(M_{\lc})\subset{\gtm_1}^*$. By \ref{chainp}(ii) we know
\begin{equation}\label{subchain}
\gtm_1\cap{\mathcal S}^*(M_{\lc})\subset\gtp_1\subset\gtm_1^*.
\end{equation}

\paragraph{}We claim: $\widehat{\gtp}=\gtm_1\cap{\mathcal S}^*(M)$. Assume this proved for a while. As $\gtm_1$ is univocally determined by $\gtp_1$, we conclude that $\widehat{\gtp}$ is univocally determined by $\gtp$ (and this proves (i)).

Indeed, let $\gtq$ be a prime ideal of ${\mathcal S}^*(M)$ contained in $\gtp$. As $\gtp\not\in Z_1$, we have $\gtq\not\in Z_1$, so $\Speca({\tt j}_1)^{-1}(\gtq)$ is a singleton $\{\gtq_1\}$. By Theorem \ref{minq}(iv) it holds $\gtq_1\subset\gtp_1$. Let us check: \em $\gtq\cap{\mathcal E}_M\neq\varnothing$ if and only if $\gtq_1\cap{\mathcal W}_{M_{\lc}}\neq\varnothing$\em. 

If $\gtq_1\cap{\mathcal W}_{M_{\lc}}\neq\varnothing$, pick $g\in\gtq_1\cap{\mathcal W}_{M_{\lc}}$ and $h\in{\mathcal S}^*(M)$ such that $Z(h)=\rho_1(M)$. Observe $gh\in\gtq\cap{\mathcal E}_M$. The converse follows because ${\mathcal E}_M\subset{\mathcal W}_{M_{\lc}}$.

By \ref{chainp}(i) we have $\gtq_1\subset\gtm_1\cap{\mathcal S}^*(M_{\lc})$ if $\gtq\cap{\mathcal E}_M=\varnothing$ and $\gtm_1\cap{\mathcal S}^*(M_{\lc})\subset\gtq_1$ if $\gtq\cap{\mathcal E}_M\neq\varnothing$. By Theorem \ref{minq}(iv), the definition of $\widehat{\gtp}$ and the equality $\Speca({\tt j}_1)^{-1}(\gtm_1\cap{\mathcal S}^*(M))=\{\gtm_1\cap{\mathcal S}^*(M_{\lc})\}$ we deduce $\gtm_1\cap{\mathcal S}^*(M)=\widehat{\gtp}$.

The fact that $\Speca({\tt j}_1)^{-1}(\gtq)$ is a singleton for each prime ideal $\gtq\subset\gtp$ together with $\gtm_1\cap{\mathcal S}^*(M)=\widehat{\gtp}$ and equation \eqref{subchain} imply by \ref{chainp}(ii) that statement (iii) holds.

\paragraph{}Next we claim: $\gtp_1\in\cl_{\Speca(M_{\lc})}(\cl_M(C\setminus\rho_1(M)))=\cl_{\Speca(M_{\lc})}(C\setminus\rho_1(M))$.

Indeed, by \ref{closure}(i) we have to show that if $Z(g)=\cl_M(C\setminus\rho_1(M))$, then $g\in\gtp_1$. As $\gtp\not\in Z_1$, there exists $h\in{\mathcal S}^*(M)\setminus\gtp$ such that $Z(h)=\rho_1(M)$. As $C\subset Z(gh)$ and $\gtp\in\cl_{\Speca(M)}(C)$, we have $hg\in\gtp\subset\gtp_1$. As $h\not\in\gtp$ and $\gtp=\gtp_1\cap{\mathcal S}^*(M)$, we conclude $h\not\in\gtp_1$, so $g\in\gtp_1$. Consequently, $\gtp_1\in\cl_{\Speca(M_{\lc})}(C\setminus\rho_1(M))$.

\paragraph{}By \ref{chainp}(iii) and equation \eqref{subchain} $\gtm_1\in\cl_{\Specs(M_{\lc})}(C\setminus\rho_1(M))$ and $\gtm_1\cap{\mathcal S}^*(M_{\lc})\in\cl_{\Speca(M_{\lc})}(C\setminus\rho_1(M))$. By the continuity of $\Speca({\tt j}_1)$
$$
\{\widehat{\gtp}\}=\Speca({\tt j}_1)(\{\gtm_1\cap{\mathcal S}^*(M_{\lc})\})\subset\Speca({\tt j}_1)(\cl_{\Speca(M_{\lc})}(C\setminus\rho_1(M)))\subset\cl_{\Speca(M)}(C),
$$
so $\widehat{\gtp}{\mathcal S}(M)\in\cl_{\Specs(M)}(C)$ (and this proves (ii)).

Notice that $\widehat{\gtp}{\mathcal S}(M)=\gtm_1\cap{\mathcal S}(M)$, so ${\tt d}_{M_{\lc}}(\gtm_1)\leq{\tt d}_M(\widehat{\gtp}{\mathcal S}(M))\leq d-2$ and by \ref{zideal3} $\widehat{\gtp}{\mathcal S}(M)$ is a prime $z$-ideal, so statement (iv) holds. By Corollary \ref{fiberspec2} and equation \eqref{subchain} we deduce that $\Speca({\tt j}_2)^{-1}(\gtp_1)$ is an infinite set, which proves (v), as required.
\end{proof}

\begin{remark}
We have proved that if ${\tt d}_M(\widehat{\gtp}{\mathcal S}(M))\leq d-2$, the fiber $\Speca({\tt j})^{-1}(\gtp)$ is an infinite set. In \S\ref{s} we will prove the converse of this fact, namely: \em If ${\tt d}_M(\widehat{\gtp}{\mathcal S}(M))=d-1$, the fiber $\Speca({\tt j})^{-1}(\gtp)$ is a finite set\em.
\end{remark}

We are ready to prove Lemmas \ref{pmchain} and \ref{fiberspec3}.

\begin{proof}[\sl Proof of Lemma \ref{pmchain}]
The proof is conducted in several steps.

\noindent{\bf Step 1.} Assume that $M$ is bounded. Let $f_i\in\gtp_i\setminus\gtp_{i-1}$ for $i=2,\ldots,r$ and $f_1\in\gtp_1$ be such that $\dim(Z(f_1))={\tt d}_M(\gtm)$ and $Z(f_1)=Y$. After substituting $M$ with ${\rm graph}(f_1,\ldots,f_r)$, we may assume that each $f_i$ can be extended continuously to $X_1:=\cl_{\R^m}(M)=\cl_{\R^m}(N)$. Consider the inclusion ${\tt k}_1:M\hookrightarrow X_1$ and denote $\gtP_i:=\gtp_i\cap{\mathcal S}(X_1)$. Observe $f_i\in\gtP_i\setminus\gtP_{i-1}$. 

\paragraph{}Let $g_j\in\gtq_j\setminus\gtq_{j-1}$ for $j=2,\ldots,s$ and consider the semialgebraic compactification of $N$
$$
{\tt k}_2:N\hookrightarrow X_2:=\cl_{\R^{m+{s-1}}}({\rm graph}(g_2,\ldots,g_s)), x\mapsto(x,g_2(x),\ldots,g_s(x)).
$$
Denote $\gtQ_j:=\gtq_j\cap{\mathcal S}(X_2)$ and observe $g_j\in\gtQ_j\setminus\gtQ_{j-1}$. Consider the (surjective) projection $\pi:X_2\to X_1,\ (x,y)\mapsto x$. Let us prove : ${\tt d}_{X_1}(\gtP_1)={\tt d}_{X_2}(\gtQ_1)=d-1$. 

Indeed, observe first
$$
d-1={\tt d}_M(\gtp_1)\leq{\tt d}_{X_1}(\gtP_1)\leq\max\{\dim(Z(f_1)),\dim(X_1\setminus M)\}\leq d-1;
$$
hence, ${\tt d}_{X_1}(\gtP_1)=d-1$. On the other hand, 
\begin{equation}\label{p0q0}
\gtQ_1\cap{\mathcal S}(X_1)=\gtq_1\cap{\mathcal S}(X_2)\cap{\mathcal S}(X_1)=\gtq_1\cap{\mathcal S}^*(M)\cap{\mathcal S}(X_1)=\gtp_1\cap{\mathcal S}(X_1)=\gtP_1.
\end{equation}
Consequently, we have the following commutative diagram 
$$
\xymatrix{
{\mathcal S}(X_1)/\gtP_1\ar@{^{(}->}[r]\ar@{^{(}->}[d]&{\mathcal S}(X_2)/\gtQ_1\ar@{^{(}->}[d]\\
\qf({\mathcal S}(X_1)/\gtP_1)\ar@{^{(}->}[r]&\qf({\mathcal S}(X_2)/\gtQ_1).
}
$$
As $f_1\circ\pi\in\gtQ_1$ and $Z(f_1)=Y$, we deduce by \ref{sd}(iii)
\begin{multline*}
d-1={\tt d}_{X_1}(\gtP_1)=\tr\deg_\R(\qf({\mathcal S}(X_1)/\gtP_1))\leq\tr\deg_\R(\qf({\mathcal S}(X_2)/\gtQ_1))\\
={\tt d}_{X_2}(\gtQ_1)\leq\dim(Z_{X_2}(f_1\circ\pi))\leq\dim(X_2\setminus N)=d-1.
\end{multline*}
Thus, ${\tt d}_{X_1}(\gtP_1)={\tt d}_{X_2}(\gtQ_1)=d-1$.

\noindent{\bf Step 2.} As ${\tt k}_2(N)$ and $N$ are respectively dense in the $d$-dimensional semialgebraic sets $X_2$ and $X_1$ and $\dim(Y)=d-1$, the dimension of $\pi^{-1}(Y)$ is $d-1$. Consider the restriction map $\pi|_{\pi^{-1}(Y)}:\pi^{-1}(Y)\to Y$, which is surjective. By \cite[9.3.3]{bcr} there exists a closed semialgebraic subset $V$ of $Y$ of dimension $\dim(V)<\dim(Y)$ such that $\pi|_{\pi^{-1}(Y)}$ has a semialgebraic trivialization over each connected component of $Y\setminus V$. We may further assume that $Y\setminus V$ is pure dimensional and locally compact. In our case, the trivialization property means that for each connected component $Y_\ell$ of $Y\setminus V$ there exists a finite set $F_\ell$ and a semialgebraic homeomorphism $\theta_\ell:Y_\ell\times F_\ell\to(\pi|_{\pi^{-1}(Y)})^{-1}(Y_\ell)=:T_\ell$ such that $\pi|_{T_\ell}\circ\theta_\ell$ is the projection map $Y_\ell\times F_\ell\to Y_\ell$. Note that the connected components of $T_\ell$ are $\theta_\ell(Y_\ell\times\{p\})$ for $p\in F_\ell$ and that each connected component of $T_\ell$ is homeomorphic to $Y_\ell$. Define $\ol{Y\setminus V}:=\cl_{X_1}(Y\setminus V)$, $T:=\pi^{-1}(Y\setminus V)$ and $\ol{T}:=\cl_{X_2}(T)$. Notice that \em $T$ is locally compact\em. Indeed, as $Y\setminus V$ is locally compact and dense in $\ol{Y\setminus V}$, it is an open subset, so $T$ is an open subset of $\ol{T}$. As $\ol{T}$ is compact, $T$ is locally compact. 

\noindent{\bf Step 3.} Write $\gtp:=\gtp_i$ for $i=1,\ldots,r$ and let us prove: \em $\gtp\not\in\cl_{\Speca(M)}(V)$ and consequently $\gtp\in\cl_{\Speca(M)}(Y\setminus V)$\em.

Suppose first by contradiction that $\gtp\in\cl_{\Speca(M)}(V)$. By \ref{closure} we have a homeomorphism $\Speca(V)\cong\cl_{\Speca(M)}(V)$ induced by the inclusion ${\tt j}':V\hookrightarrow M$. Let $\gta$ be a minimal prime ideal of ${\mathcal S}(V)$ such that $\gta':=\Speca({\tt j}')(\gta\cap{\mathcal S}^*(V))\subset\gtp$. Note
$$
{\tt d}_M(\Specs({\tt j}')(\gta))={\tt d}_V(\gta)\leq\dim(V)\leq d-2.
$$ 
The subchain $\gtp_1=\gtm\cap{\mathcal S}^*(M)\subsetneq\cdots\subsetneq\gtp_r=\gtm^*$ is by \ref{chainp} the same for every non-refinable chain of prime ideals in ${\mathcal S}^*(M)$ ending at $\gtm^*$. As $\gtp_j\cap{\mathcal W}_M\neq\varnothing$ for $j\geq2$, we deduce $\gta'\subset\gtp_1$ because $\gta$ is a minimal prime ideal of ${\mathcal S}(V)$, so $\gta'\cap{\mathcal W}_M=\varnothing$. Thus, $\Specs({\tt j}')(\gta)=\gta'{\mathcal S}(M)$ satisfies
$$
d-1>{\tt d}_M(\Specs({\tt j}')(\gta))={\tt d}_M(\gta'{\mathcal S}(M))\geq{\tt d}_M(\gtp_1{\mathcal S}(M))=d-1, 
$$
which is a contradiction. Therefore $\gtp\not\in\cl_{\Speca(M)}(V)$.

As $Y=(Y\setminus V)\cup V$, we now have 
$$
\cl_{\Speca(M)}(Y)=\cl_{\Speca(M)}(Y\setminus V)\cup\cl_{\Speca(M)}(V);
$$
hence, $\gtp\in\cl_{\Speca(M)}(Y\setminus V)$. 

\noindent{\bf Step 4.} We claim: \em $\gtP_1\in\cl_{\Speca(X_1)}(\ol{Y\setminus V})$ and $\gtQ_1\in\cl_{\Speca(X_2)}(\ol{T})$. Consequently, 
$$
\gtP_i\in\cl_{\Speca(X_1)}(\ol{Y\setminus V})\quad\text{for $i=1,\ldots,r$}\quad\text{and}\quad \gtQ_j\in\cl_{\Speca(X_2)}(\ol{T})\quad\text{for $j=1,\ldots,s$.}
$$\em

First, as $\gtp_1\in\cl_{\Speca(M)}(Y\setminus V)$, we deduce by Theorem \ref{main}(ii)
$$
\gtP_1=\Speca({\tt k}_1)(\gtp_1)\in\Speca({\tt k}_1)(\cl_{\Speca(M)}(Y\setminus V))=\cl_{\Speca(X_1)}(\ol{Y\setminus V}).
$$

\paragraph{}\label{fp0}
Let $f\in\gtP_1$ be such that $Z_{X_1}(f)=\ol{Y\setminus V}$ and define $g:=f\circ\pi\in\gtQ_1$, which satisfies $Z_{X_2}(g)=\pi^{-1}(\ol{Y\setminus V})$. 

As $\gtQ_1$ is a prime $z$-ideal, we deduce by \ref{closure}(i) that $\gtQ_1\in\cl_{\Speca(X_2)}(\pi^{-1}(\ol{Y\setminus V}))$. On the other hand, let $C:=\ol{Y\setminus V}\setminus(Y\setminus V)$, which is a closed subset of $X_1$ because $Y\setminus V$ is locally compact. As $\pi^{-1}(\ol{Y\setminus V})=\pi^{-1}(Y\setminus V)\cup\pi^{-1}(C)=\ol{T}\cup\pi^{-1}(C)$, 
$$
\cl_{\Speca(X_2)}(\pi^{-1}(\ol{Y\setminus V}))=\cl_{\Speca(X_2)}(\ol{T})\cup\cl_{\Speca(X_2)}(\pi^{-1}(C)).
$$
Suppose $\gtQ_1\in\cl_{\Speca(X_2)}(\pi^{-1}(C))$. As $\Speca(\pi):\Speca(X_2)\to\Speca(X_1)$ is continuous,
$$
\gtP_1=\Speca(\pi)(\gtQ_1)\in\Speca(\pi)(\cl_{\Speca(X_2)}(\pi^{-1}(C)))\subset\cl_{\Speca(X_2)}(C).
$$
But this contradicts \ref{closure}(i) because $\dim(C)<\dim(Y\setminus V)=d-1$ and ${\tt d}_{X_1}(\gtP_1)=d-1$. Thus, $\gtQ_1\in\cl_{\Speca(X_2)}(\ol{T})$.

\noindent{\bf Step 5.} Consider the commutative diagram
$$
\xymatrix{
T\ar@{^{(}->}[rr]^{{\tt j}_2}\ar@{->>}[d]^{\pi|_T}&&\ol{T}\ar@{^{(}->}[r]^(0.475){{\tt j}_1}\ar@{->>}[d]^{\pi|_{\ol{T}}}&X_2\ar@{->>}[d]^\pi\\
Y\setminus V\ar@{^{(}->}[r]^(0.375){{\tt i}_3}&(\ol{Y\setminus V})\cap M\ar@{^{(}->}[r]^(0.575){{\tt i}_2}\ar@{^{(}->}[d]^{{\tt i}_0}&\ol{Y\setminus V}\ar@{^{(}->}[r]^(0.55){{\tt i}_1}&X_1\\
&Y\ar@{^{(}->}[r]^{{\tt i}}&M\ar@{^{(}->}[ru]^{{\tt k}_1}\\
}\\
$$
that induces the following commutative one
{\footnotesize$$
\xymatrix{
\Speca(T)\ar@{->>}[rr]\ar[d]&&\Speca(\ol{T})\ar@{<->}[r]^(0.45){\cong}\ar[d]&\cl_{\Speca(X_2)}(\ol{T})\ar@{^{(}->}[r]\ar[d]&\Speca(X_2)\ar[d]\\
\Speca(Y\setminus V)\ar@{->>}[r]&\Speca((\ol{Y\setminus V})\cap M)\ar@{->>}[r]\ar@{<->}[d]_\cong&\Speca(\ol{Y\setminus V})\ar@{<->}[r]^(0.45)\cong&\cl_{\Speca(X_1)}(\ol{Y\setminus V})\ar@{^{(}->}[r]&\Speca(X_1)\\
&\cl_{\Speca(M)}((\ol{Y\setminus V})\cap M)\ar@{->>}[rru]\ar@{^{(}->}[r]&\cl_{\Speca(M)}(Y)\ar@{^{(}->}[r]&\Speca(M)\ar@{->>}[ru]\\
}
$$}

\paragraph{} As $\gtp_i\in\cl_{\Speca(M)}(Y\setminus V)\subset\cl_{\Speca(M)}((\ol{Y\setminus V})\cap M)$ (see Step 3), there exists a unique prime ideal $\gtp_i'\in\Speca((\ol{Y\setminus V})\cap M)$ such that $\Speca({\tt i}\circ{\tt i}_0)(\gtp_i')=\gtp_i$. As the chain $\gtp_1\subsetneq\cdots\subsetneq\gtp_r$ is non-refinable, the same happens to the chain $\gtp_1'\subsetneq\cdots\subsetneq\gtp_r'$. As $\gtp_i\not\in\cl_{\Speca(M)}(V)$ and $\Speca({\tt i}\circ{\tt i}_0)$ is continuous, $\gtp_i'\not\in \cl_{\Speca(\ol{Y\setminus V}\cap M)}(\ol{Y\setminus V}\cap V)$. 

\paragraph{} It holds: $Z:=\cl_{\Speca(\ol{Y\setminus V}\cap M)}(\ol{Y\setminus V}\cap M)\setminus(Y\setminus V)=\cl_{\Speca(\ol{Y\setminus V}\cap M)}(\ol{Y\setminus V}\cap V)$. To prove this, we show $(\ol{Y\setminus V}\cap M)\setminus(Y\setminus V)=\ol{Y\setminus V}\cap V$.

Indeed,
\begin{multline*}
(\ol{Y\setminus V}\cap M)\setminus(Y\setminus V)=\cl_M(Y\setminus V)\setminus(Y\setminus V)=(\cl_M(Y\setminus V)\cap(M\setminus Y))\\
\cup(\cl_M(Y\setminus V)\cap V)
=(\cl_Y(Y\setminus V)\setminus Y)\cup(\ol{Y\setminus V}\cap V)=\ol{Y\setminus V}\cap V.
\end{multline*}

\paragraph{}By Theorem \ref{minq}(iv) there exists a chain of prime ideals $\gtp_1''\subsetneq\cdots\subsetneq\gtp_r''$ in $\Speca(Y\setminus V)$ such that $\Speca({\tt i_3})(\gtp_i'')=\gtp_i'$. By Theorem \ref{main}(iii) the restriction
$$
\Speca({\tt i}_3)|:\Speca(Y\setminus V)\setminus\Speca({\tt i}_3)^{-1}(Z)\to\Speca(\ol{Y\setminus V}\cap M)\setminus Z
$$
is a homeomorphism. As the chain $\gtp_1'\subsetneq\cdots\subsetneq\gtp_r'$ is non-refinable and each $\gtp_i'\not\in Z$, the chain $\gtp_1''\subsetneq\cdots\subsetneq\gtp_r''$ is non-refinable.

\paragraph{}Let $\gtP_i'$ be the (unique) prime ideal of $\Speca(\ol{Y\setminus V})$ that satisfies 
$$
\Speca({\tt i}_1)(\gtP_i')=\gtP_i\in\cl_{\Speca(X_1)}(\ol{Y\setminus V}). 
$$
We claim: $\Speca({\tt i}_2)(\gtp_i')=\gtP_i'$. 

Indeed, as ${\tt i}_1\circ{\tt i}_2={\tt k}_1\circ{\tt i}\circ{\tt i}_0$, we have 
$$
\Speca({\tt i}_1)(\Speca({\tt i}_2)(\gtp_i'))=\Speca({\tt k}_1)(\Speca({\tt i}\circ{\tt i}_0)(\gtp_i'))=\Speca({\tt k}_1)(\gtp_i)=\gtP_i.
$$
Consequently, $\Speca({\tt i}_2)(\gtp_i')=\gtP_i'$.

\paragraph{}\label{minimalp0}Let us check now: $\Speca({\tt i}_2\circ{\tt i}_3)^{-1}(\gtP_1')=\{\gtp_1''\}$. To that end, we show first: \em $\gtP_1'$ is a minimal prime ideal of ${\mathcal S}(\ol{Y\setminus V})$\em. Once this is proved, its fiber under $\Speca({\tt i}_2\circ{\tt i}_3)$ is by Theorem \ref{minq}(i) a singleton; hence, $\Speca({\tt i}_2\circ{\tt i}_3)^{-1}(\gtP_1')=\{\gtp_1''\}$ because $\Speca({\tt i}_2\circ{\tt i}_3)(\gtp_1'')=\Speca({\tt i}_2)(\gtp_1')=\gtP_1'$.

Indeed, as $\ol{Y\setminus V}$ is pure dimensional, it is enough to show by Theorem \ref{minimalnlc} that ${\tt d}_{\ol{Y\setminus V}}(\gtP_1')=d-1$. Since the homomorphism ${\mathcal S}(X)\to{\mathcal S}(\ol{Y\setminus V})$ is surjective, $f\in\gtP_1$ satisfies $Z_{X_1}(f)=\ol{Y\setminus V}$ (see \ref{fp0}) and $\Speca({\tt i}_1)(\gtP_1')=\gtP_1$, it holds ${\tt d}_{\ol{Y\setminus V}}(\gtP_1')={\tt d}_{X_1}(\gtP_1)=d-1$.

\paragraph{} As $\Speca({\tt j})$ maps the chain $\gtq_1\subsetneq\cdots\subsetneq\gtq_s$ onto the chain $\gtp_1\subsetneq\cdots\subsetneq\gtp_r$, $\Speca(\pi)(\gtQ_1)=\gtP_1$ (by \eqref{p0q0}) and the following diagrams are commutative
\vspace*{-5mm}
$$
\xymatrix{
N\ar@{^{(}->}[r]\ar@{^{(}->}[d]&X_2\ar@{->>}[d]\\
M\ar@{^{(}->}[r]&X_1
}\quad\begin{array}{c} \\[20pt]\leadsto\end{array}\quad
\xymatrix{
\Speca(N)\ar@{->>}[r]\ar@{->>}[d]\ar@{->>}[rd]&\Speca(X_2)\ar@{->>}[d]\\
\Speca(M)\ar@{->>}[r]&\Speca(X_1),
}
$$

\noindent we conclude that $\Speca(\pi)$ maps the chain $\gtQ_1\subsetneq\cdots\subsetneq\gtQ_s$ onto the chain $\gtP_1\subsetneq\cdots\subsetneq\gtP_r$. Let $\gtQ_j'$ be the unique prime ideal of $\Speca(\ol{T})$ such that $\Speca({\tt j}_1)(\gtQ_j')=\gtQ_j\in\cl_{\Speca(X_2)}(\ol{T})$. Notice that $\Speca(\pi|_{\ol{T}})$ maps the chain $\gtQ_1'\subsetneq\cdots\subsetneq\gtQ_s'$ onto the chain $\gtP_1'\subsetneq\cdots\subsetneq\gtP_r'$. 

\paragraph{}\label{q0p0}
By Theorem \ref{minq}(iv) there exists a chain of prime ideals $\gtq_1''\subsetneq\cdots\subsetneq\gtq_s''$ in $\Speca(T)$ such that $\Speca({\tt j_2})(\gtq_j'')=\gtQ_j'$. We claim: \em $\Speca(\pi|_T)$ maps the chain $\gtq_1''\subsetneq\cdots\subsetneq\gtq_s''$ onto the chain $\gtp_1''\subsetneq\cdots\subsetneq\gtp_r''$\em. As the chain $\gtp_1''\subsetneq\cdots\subsetneq\gtp_r''$ is non-refinable, it is enough to show by Theorem \ref{minq}(iv) that $\Speca(\pi|_T)(\gtq_1'')=\gtp_1''$.

Since the subdiagram
$$
\xymatrix{
\Speca(T)\ar@{->>}[rr]^{\Speca({\tt j}_2)}\ar[d]_{\Speca(\pi|_{T})}&&\Speca(\ol{T})\ar[d]^{\Speca(\pi|_{\ol{T}})}\\
\Speca(Y\setminus V)\ar@{->>}[rr]^{\Speca({\tt i}_2\circ{\tt i}_3)}&&\Speca(\ol{Y\setminus V})
}
$$
is commutative, we have
$$
\Speca({\tt i}_2\circ{\tt i}_3)(\Speca(\pi|_T)(\gtq_1''))=\Speca(\pi|_{\ol{T}})(\Speca({\tt j}_2)(\gtq_1''))=\Speca(\pi|_{\ol{T}})(\gtQ_1')=\gtP_1'
$$ 
and by \ref{minimalp0} $\Speca(\pi|_T)(\gtq_1'')=\gtp_1''$, as required.

\paragraph{}For the sake of clearness let us summarize all the previous information:
{\tiny$$
\xymatrix{
\Speca(T)\ar@{->>}[rr]\ar[d]&&\Speca(\ol{T})\ar@{^{(}->}[r]\ar[d]&\Speca(X_2)\ar[d]\\
\Speca(Y\setminus V)\ar@{->>}[r]&\Speca((\ol{Y\setminus V})\cap M)\ar@{->>}[r]\ar@{^{(}->}[d]&\Speca(\ol{Y\setminus V})\ar@{^{(}->}[r]&\Speca(X_1)\\
&\Speca(M)\ar@{->>}[rru]\\
}
\quad\xymatrix{\gtq_j''\ar@{|->}[d]\ar@{|->}[rr]&&\gtQ_j'\ar@{|->}[r]\ar@{|->}[d]&\gtQ_j\ar@{|->}[d]\\
\gtp_i''\ar@{|->}[r]&\gtp_i'\ar@{|->}[d]\ar@{|->}[r]&\gtP_i'\ar@{|->}[r]&\gtP_i\\
&\gtp_i\ar@{|->}[rru]}
$$}

\noindent{\bf Step 6.} Let $S_1,\ldots,S_\ell$ be the connected components of $T$. By \ref{closure}(iv) the connected components of $\Speca(T)$ are $\cl_{\Speca(T)}(S_k)\cong\Speca(S_k)$ for $k=1,\ldots,\ell$. We may assume $\gtq_1''\in\cl_{\Speca(T)}(S_1)\cong\Speca(S_1)$ and $\pi(S_1)=Y_1$; hence, as $\Speca(\pi|_T)(\gtq_1'')=\gtp_1''$ (see \ref{q0p0}) and $\Speca(\pi|_T)$ is continuous, it holds $\gtp_1''\in\cl_{\Speca(Y\setminus V)}(Y_1)\cong\Speca(Y_1)$. As we have proved in Step 2, the map $\pi|_{S_1}:S_1\to Y_1$ is a semialgebraic homeomorphism; hence, 
$$
\Speca(\pi|_{S_1}):\Speca(S_1)\to\Speca(Y_1)
$$
is a homeomorphism. Thus, the restriction map
$$
\Speca(\pi|_T)|_{\cl_{\Speca(T)}(S_1)}:\cl_{\Speca(T)}(S_1)\to\cl_{\Speca(Y\setminus V)}(Y_1)
$$
is also a homeomorphism. In particular, $\Speca(\pi|_T)|_{\cl_{\Speca(T)}(S_1)}$ maps the chain $\gtq_1''\subsetneq\cdots\subsetneq\gtq_s''$ bijectively onto the chain $\gtp_1''\subsetneq\cdots\subsetneq\gtp_r''$, so $r=s$, as required.
\end{proof}

\begin{proof}[\sl Proof of Lemma \ref{fiberspec3}]
Note first that $\gtp$ is not a minimal prime ideal of ${\mathcal S}^*(M)$. Otherwise, since $\gtp\cap{\mathcal W}_M=\varnothing$, $\gtP:=\gtp{\mathcal S}(M)$ would be a minimal prime ideal of ${\mathcal S}(M)$. As $M$ is pure dimensional of dimension $d$, it follows from Theorem \ref{minimalnlc} that ${\tt d}_M(\gtP)=d$, against the hypotheses.
 
\paragraph{}Now we prove: \em $\gtP:=\gtp{\mathcal S}(M)$ is a $z$-ideal\em. 

Indeed, by \ref{sd}(i) there exists a (unique) prime $z$-ideal $\gtP^z$ of ${\mathcal S}(M)$ such that $\gtP\subset\gtP^z$ and ${\tt d}_M(\gtP^z)={\tt d}_M(\gtP)=d-1$. By assumption $\gtp$ contains only one minimal prime ideal $\gta$ of ${\mathcal S}^*(M)$ properly. Since $\gtp\cap{\mathcal W}_M=\varnothing$, also $\gtP$ contains by \ref{inequality} a unique minimal prime ideal $\gtA$ of ${\mathcal S}(M)$ properly. By \ref{zideal3} $\gtA$ is a $z$-ideal, so by Theorem \ref{minimalnlc} ${\tt d}_M(\gtA)=d$. As ${\tt d}_M(\gtP^z)=d-1$, by \ref{sd}(ii) there does not exist any prime ideal between $\gtA$ and $\gtP^z$; hence, $\gtP=\gtP^z$ is a prime $z$-ideal.

\paragraph{}As ${\mathcal S}(M)={\mathcal S}^*(M)_{{\mathcal W}_M}$, the quotient fields $\qf({\mathcal S}(M)/\gtP)$ and $\qf({\mathcal S}^*(M)/\gtp)$ are equal. Thus, by \ref{sd}(iii)
$$
\tr\deg_\R(\qf({\mathcal S}^*(M)/\gtp))=\tr\deg_\R(\qf({\mathcal S}(M)/\gtP))={\tt d}_M(\gtP)=d-1.
$$
On the other hand, if $\gtq$ is a prime ideal of ${\mathcal S}^*(N)$ such that $\gtp=\gtq\cap{\mathcal S}^*(M)=\Speca({\tt j})(\gtq)$, it holds: $\tr\deg_\R(\qf({\mathcal S}^*(N)/\gtq))=d-1$.

Indeed, we have the inclusions
$$
{\mathcal S}^*(M)/\gtp\hookrightarrow{\mathcal S}^*(N)/\gtq\quad\leadsto\quad\qf({\mathcal S}^*(M)/\gtp)\hookrightarrow\qf({\mathcal S}^*(N)/\gtq).
$$
Thus, by \ref{sd}(iii) 
$$
d-1=\tr\deg_\R(\qf({\mathcal S}^*(M)/\gtp))\leq\tr\deg_\R(\qf({\mathcal S}^*(N)/\gtq))\leq\dim(N)=d.
$$
Suppose by contradiction that $\tr\deg_\R(\qf({\mathcal S}^*(N)/\gtq))=d$. Then $\gtq$ is a minimal prime ideal of ${\mathcal S}^*(N)$ and by Theorem \ref{minq}(iii) $\gtp=\Speca({\tt j})(\gtq)$ is a minimal prime ideal of ${\mathcal S}^*(M)$, which is a contradiction. Thus, $\tr\deg_\R(\qf({\mathcal S}^*(N)/\gtq))=d-1$.

\paragraph{}Finally we prove: \em $\Speca({\tt j})^{-1}(\gtp)$ is a singleton\em. 

Suppose by contradiction that $\Speca({\tt j})^{-1}(\gtp)$ is not a singleton. Then there exist two distinct prime ideals $\gtq_1,\gtq_2\in\Speca(N)$ such that $\Speca({\tt j})(\gtq_i)=\gtp$. In particular $\tr\deg_\R(\qf({\mathcal S}^*(N)/\gtq_i))=d-1$. Let $\gtb_i\subset\gtq_i$ be a minimal prime ideal of ${\mathcal S}^*(N)$. By Theorem \ref{minq}(iii) $\Speca({\tt j})(\gtb_i)$ is a minimal prime ideal of ${\mathcal S}^*(M)$ contained in $\gtp$; hence, $\Speca({\tt j})(\gtb_i)=\gta$. By Theorem \ref{minq}(i) the fiber $\Speca({\tt j})^{-1}(\gta)$ is a singleton, so $\gtb_1=\gtb_2$. Thus, we may assume by \ref{chainp} that $\gtb_1\subset\gtq_1\subsetneq\gtq_2$. 

By \ref{longitud} and \ref{brimming} there exists a brimming semialgebraic compactification $(X,{\tt k})$ of $N$ such that $\gtq_1\cap{\mathcal S}(X)\subsetneq\gtq_2\cap{\mathcal S}(X)$ and
$$
\qf({\mathcal S}(X)/(\gtq_i\cap{\mathcal S}(X)))=\qf({\mathcal S}^*(N)/\gtq_i)
$$
for $i=1,2$. By \ref{sd}(ii) we deduce, as $X$ is locally compact, that
\begin{multline*}
d-1=\tr\deg_\R(\qf({\mathcal S}^*(N)/\gtq_1))={\tt d}_X(\gtq_1\cap{\mathcal S}(X))\\
>{\tt d}_X(\gtq_2\cap{\mathcal S}(X))=\tr\deg_\R(\qf({\mathcal S}^*(N)/\gtq_2))=d-1,
\end{multline*}
which is a contradiction, as required.
\end{proof}

\subsection{Proof of the quantitative part of Theorem \ref{fiberspec} for singleton fibers}\label{fiberspec3b}\setcounter{paragraph}{0} 
Our purpose here is to prove the following: \em Let $\gtp\in\cl_{\Speca(M)}(Y)\setminus\cl_{\Speca(M)}(\rho_1(M))$ be a prime ideal such that ${\tt d}_M(\widehat{\gtp}{\mathcal S}(M))=d-1$. Then the fiber $\Speca({\tt j})^{-1}(\gtp)$ is a singleton if and only if $\widehat{\gtp}$ contains exactly one minimal prime ideal of ${\mathcal S}^*(M)$\em.

\begin{proof}[\sl Proof of the quantitative part of Theorem \em\ref{fiberspec} \em for singleton fibers]
Assume first that the threshold $\widehat{\gtp}$ contains only one minimal prime ideal. As $\widehat{\gtp}\cap{\mathcal E}_M=\varnothing$, also $\widehat{\gtp}\cap{\mathcal W}_M=\varnothing$. By Lemma \ref{e} $\widehat{\gtp}\in\cl_{\Speca(M)}(Y)$, so by Lemma \ref{fiberspec3} $\Speca({\tt j})^{-1}(\widehat{\gtp})$ is a singleton. Let us check that also $\Speca({\tt j})^{-1}(\gtp)$ is a singleton. If $\gtp\cap{\mathcal E}_M=\varnothing$, it holds $\widehat{\gtp}=\gtp$ and we are done, so we assume $\gtp\cap{\mathcal E}_M\neq\varnothing$. 

\paragraph{}\label{hypo}$\hspace{-1.5mm}$\em We may assume $\gtp\cap{\mathcal W}_M\neq\varnothing$ and $\widehat{\gtp}=\gtm\cap{\mathcal S}^*(M)$ for some maximal ideal $\gtm$ of ${\mathcal S}(M)$\em.

Indeed, if $\gtp\cap{\mathcal W}_M\neq\varnothing$, there is nothing to prove, so we assume $\gtp\cap{\mathcal W}_M=\varnothing$. Consider the suitably arranged {\tt sa}-tuples $(M,M_{\lc},\rho_1(M),{\tt j}_1,{\tt i}_1)$ and $(M_{\lc},N,M_{\lc}\setminus N,{\tt j}_2,{\tt i}_2)$. Denote $Z:=\cl_{\Speca(M)}(\rho_1(M))$ and recall that by Theorem \ref{main}(iii) the restriction map
\begin{equation}\label{homeo88}
\Speca({\tt j}_1)|:\Speca(M_{\lc})\setminus\Speca({\tt j}_1)^{-1}(Z)\to\Speca(M)\setminus Z
\end{equation}
is a homeomorphism. As $\gtp\not\in Z$ (by hypothesis), the fiber $\Speca({\tt j}_1)^{-1}(\gtp)$ is a singleton whose unique element is denoted by $\gtp_1\in\Speca(M_{\lc})\setminus\Speca({\tt j}_1)^{-1}(Z)$. As ${\tt j}={\tt j}_1\circ{\tt j}_2$, the sizes of $\Speca({\tt j})^{-1}(\gtp)$ and $\Speca({\tt j}_2)^{-1}(\gtp_1)$ coincide. As $\widehat{\gtp}\subset\gtp$ and $\gtp\not\in Z$, we deduce $\widehat{\gtp}\not\in Z$, so $\Speca({\tt j}')^{-1}(\widehat{\gtp})$ is a singleton whose unique element $\gtq\in\Speca(M_{\lc})\setminus\Speca({\tt j}_1)^{-1}(Z)$. On the other hand, as ${\mathcal E}_M\subset{\mathcal W}_{M_{\lc}}$ and $\gtp_1\cap{\mathcal S}^*(M)=\gtp$, we have $\gtp_1\cap{\mathcal W}_{M_{\lc}}\neq\varnothing$. By Lemma \ref{e}(i) there exists a maximal ideal $\gtm_1$ of ${\mathcal S}(M_{\lc})$ such that $\gtm_1\cap{\mathcal S}^*(M_{\lc})\subset\gtp_1\subset\gtm_1^*$ and $\widehat{\gtp}=\gtm_1\cap{\mathcal S}^*(M)$; hence, $\gtq=\gtm_1\cap{\mathcal S}(M_{\lc})=\widehat{\gtp}_1$. Notice that $\widehat{\gtp}_1$ contains only one minimal prime ideal of ${\mathcal S}^*(M_{\lc})$ because \eqref{homeo88} is a homeomorphism, $\widehat{\gtp}\not\in Z$ and $\widehat{\gtp}$ contains only one minimal prime ideal of ${\mathcal S}^*(M)$. In addition, it holds ${\tt d}_{M_{\lc}}(\widehat{\gtp}_1{\mathcal S}(M_{\lc}))={\tt d}_{M_{\lc}}(\gtm_1)={\tt d}_M(\widehat{\gtp}{\mathcal S}(M))=d-1$ because $\widehat{\gtp}{\mathcal S}(M)=\gtm_1\cap{\mathcal S}(M)$, $\widehat{\gtp}_1{\mathcal S}(M_{\lc})=\gtm_1$ and $\dim(\rho_1(M))\leq d-2$ (see Corollary \ref{rho2}). 

Thus, substituting $M$ by $M_{\lc}$ and $\gtp$ by $\gtp_1$, we are under the hypotheses of \ref{hypo} (together with those in the statement) and the sizes of $\Speca({\tt j})^{-1}(\gtp)$ and $\Speca({\tt j}_2)^{-1}(\gtp_1)$ coincide.

\paragraph{} Assume in the following $\gtp\cap{\mathcal W}_M\neq\varnothing$ and $\widehat{\gtp}=\gtm\cap{\mathcal S}^*(M)$ for some maximal ideal $\gtm$ of ${\mathcal S}(M)$. As $\widehat{\gtp}$ contains only one minimal prime ideal, also $\gtm$ contains only one minimal prime ideal that we denote with $\gta_0$. In particular, as $M$ is pure dimensional, it holds by Theorem \ref{minimalnlc} that ${\tt d}_M(\gta_0)=d$. As ${\tt d}_M(\gtm)=d-1$, we deduce by \ref{sd}(ii) that there does not exist any prime ideal between $\gta_0$ and $\gtm$. By \ref{inequality} $\gtp_0:=\gta_0\cap{\mathcal S}^*(M)$ is the unique minimal ideal of ${\mathcal S}^*(M)$ contained in $\widehat{\gtp}$. By \ref{chainp}(ii) we conclude that the collection of all prime ideals of ${\mathcal S}^*(M)$ containing $\gtp_0$ is
\begin{equation}\label{eschain}
\gtp_0\subsetneq\gtp_1:=\widehat{\gtp}=\gtm\cap{\mathcal S}^*(M)\subsetneq\cdots\subsetneq\gtp_\ell:=\gtp\subsetneq\cdots\subsetneq\gtp_r:=\gtm^*.
\end{equation}
It follows from Theorem \ref{minq}(i) that there exists only one minimal prime ideal $\gtq_0$ of ${\mathcal S}^*(N)$ such that 
$$
\Speca({\tt j})(\gtq_0)=\gtp_0. 
$$
Let $\gtq_0\subsetneq\gtq_1\subsetneq\cdots\subsetneq\gtq_s$ be the collection of all prime ideals of ${\mathcal S}^*(N)$ that contain $\gtq_0$. Observe $\Speca({\tt j})(\gtq_1)=\gtp_1$ and by Lemma \ref{pmchain} we conclude $r=s$. Summarizing, we conclude by Theorem \ref{minq} that the fibers of all prime ideals in the chain \eqref{eschain} are singletons; hence, in particular, $\Speca({\tt j})^{-1}(\gtp)=\{\gtq_\ell\}$ is a singleton.

\paragraph{}Assume next that the fiber $\Speca({\tt j})^{-1}(\gtp)$ is a singleton. Let us prove that $\widehat{\gtp}$ contains a unique minimal prime ideal of ${\mathcal S}^*(M)$. As $\widehat{\gtp}\cap{\mathcal W}_M=\varnothing$, by \ref{inequality} it is enough to check that the prime ideal $\widehat{\gtP}=\widehat{\gtp}{\mathcal S}(M)$ of ${\mathcal S}(M)$ contains only one minimal prime ideal of ${\mathcal S}(M)$. Suppose by contradiction that $\widehat{\gtP}$ contains two different minimal prime ideals $\gtQ_1$ and $\gtQ_2$ of ${\mathcal S}(M)$. Fix $g\in{\mathcal S}(M)$ such that $Z(g)=Y$. As $\widehat{\gtP}\in\cl_{\Specs(M)}(Y)$, we have $g\in\widehat{\gtP}$ and by Lemma \ref{intcomp1} there exist $f_i\in\gtQ_i\setminus\gtQ_j$ if $i\neq j$ such that $Z(f_1^2+f_2^2)\subset Z(g)$. Define $Z_i:=Z(f_i)$ and $N_i:=Z_i\cap N$. As $N$ is locally compact, so are $N_1$ and $N_2$. Moreover, $N_1$ and $N_2$ are disjoint because $Z_1\cap Z_2\cap N\subset Y\cap N=\varnothing$. By \ref{closure}(ii) $\cl_{\Speca(N)}(N_1)\cap\cl_{\Speca(N)}(N_2)=\varnothing$.

Let ${\tt j}_i:N_i\hookrightarrow Z_i$ be the inclusion map. By Theorem \ref{minq}(iii) $\Speca({\tt j}_i):\Speca(N_i)\to\Speca(Z_i)$ is surjective. Thus, after identifying $\Speca(N_i)\equiv\cl_{\Speca(N)}(N_i)$ and $\Speca(Z_i)\equiv\cl_{\Speca(M)}(Z_i)$, the map $\Speca({\tt j})|:\cl_{\Speca(N)}(N_i)\to\cl_{\Speca(M)}(Z_i)$ is surjective.
 
Observe that $\gtq_i:=\gtQ_i\cap{\mathcal S}^*(M)\in\cl_{\Speca(M)}(Z_i)$ because $\gtQ_i\in\cl_{\Specs(M)}(Z_i)$. As 
$$
\gtp\in\cl_{\Speca(M)}(\{\widehat{\gtp}\})\subset\cl_{\Speca(M)}(\{\gtq_1\})\cap\cl_{\Speca(M)}(\{\gtq_2\}),
$$
we conclude $\gtp\in\cl_{\Speca(M)}(Z_1)\cap\cl_{\Speca(M)}(Z_2)$.
Thus, there exists $\gtb_i\in\cl_{\Speca(N)}(N_i)$ such that 
$$
\Speca({\tt j})(\gtb_i)=\gtp
$$ 
and $\gtb_1\neq\gtb_2$ because $\cl_{\Speca(N)}(N_1)\cap\cl_{\Speca(N)}(N_2)=\varnothing$. Consequently, $\Speca{j}^{-1}(\gtp)$ is not a singleton, which is a contradiction. Thus, $\widehat{\gtP}$ contains only one minimal prime ideal of ${\mathcal S}(M)$, as required.
\end{proof}

\subsection{Proof of the remaining part of Theorem \ref{fiberspec}}\label{s}\setcounter{paragraph}{0} 
Our purpose here is to prove: \em If $\gtp\in\cl_{\Speca(M)}(Y)\setminus\cl_{\Speca(M)}(\rho_1(M))$ and ${\tt d}_M(\widehat{\gtp}{\mathcal S}(M))=d-1$, then the fiber of $\gtp$ is a finite set and whose size equals the number of minimal prime ideals of ${\mathcal S}^*(M)$ contained in $\widehat{\gtp}$\em.

\begin{proof}[\sl Proof of the remaining part of Theorem \em\ref{fiberspec}]
We may assume that $M$ is bounded and denote $X:=\cl_{\R^m}(M)$. By Theorem \cite[9.2.1]{bcr} applied to $X$ and the family of semialgebraic sets ${\mathcal F}:=\{M,N,Y\}$ there exists a semialgebraic triangulation $(K,\Phi)$ of $X$ compatible with ${\mathcal F}$. For simplicity we identify all involved objects with their images under $\Phi^{-1}$ and denote $\widehat{\gtP}:=\widehat{\gtp}{\mathcal S}(M)$, which is a proper prime ideal of ${\mathcal S}(M)$ because $\widehat{\gtp}\cap{\mathcal W}_M=\varnothing$.

\paragraph{}Let $\tau_1^0,\ldots,\tau_r^0$ be all simplices of $K$ contained in $Y$. We know by the compatibility property of the semialgebraic triangulation $(K,\Phi)$ that $Y=\bigcup_{i=1}^r\tau_i^0$. Let $T_i:=\cl_M(\tau_i^0)=\tau_i\cap Y$ and $h_i\in{\mathcal S}^*(M)$ be such that $Z(h_i)=T_i$. As the zero set of $h:=\prod_{i=1}^rh_i$ equals $Y$ and $\widehat{\gtp}\in\cl_{\Speca(M)}(Y)$, we may assume $h_1\in\widehat{\gtp}$ and write $T:=T_1$ and $\tau:=\tau_1$. Note in particular that $\widehat{\gtP}\in\cl_{\Specs(M)}(T)$ as $\widehat{\gtP}$ is a prime $z$-ideal; hence, $\widehat{\gtp}\in\cl_{\Speca(M)}(T)$. 

As ${\tt d}_M(\widehat{\gtP})=d-1$, we deduce $\dim(T)=\dim(Z(g_1))=d-1$. Let $\sigma_1,\ldots,\sigma_s\in K$ be the collection of all simplices of dimension $d$ that contains the $(d-1)$-dimensional simplex $\tau$; clearly, $\sigma_i\cap\sigma_j=\tau$ if $i\neq j$. Denote $M_i:=\sigma_i\cap M$ and observe $M_i\cap M_j=T=\tau\cap Y$ if $i\neq j$ and $\widehat{\gtp}\in\cl_{\Speca(M)}(M_i)$. 

The semialgebraic set $U:=\bigcup_{i=1}^s\sigma_i^0\cup\tau^0$ is an open neighborhood of $\tau^0$ in $M$ (as it is the star of $\tau^0$). Thus, $M_0:=M\setminus U$ is a closed semialgebraic subset of $M$ that satisfies
$$
M_0\cap T=(M\setminus U)\cap T\subset T\setminus\tau^0\subset\tau\setminus\tau^0,
$$
which has dimension $<\dim(T)=d-1$. As $\widehat{\gtP}\in\cl_{\Specs(M)}(T)$ and ${\tt d}_M(\widehat{\gtP})=d-1$, we deduce $\widehat{\gtP}\not\in\cl_{\Specs(M)}(M_0)$ because otherwise by \ref{closure}(iii) 
$$
\widehat{\gtP}\in\cl_{\Specs(M)}(M_0)\cap\cl_{\Specs(M)}(T)=\cl_{\Specs(M)}(M_0\cap T),
$$
so $d-1={\tt d}_M(\widehat{\gtP})\leq\dim(M_0\cap T)\leq d-2$, which is a contradiction. Thus, $\widehat{\gtp}\not\in\cl_{\Speca(M)}(M_0)$ and by Lemma \ref{e}(ii) we deduce $\gtp\not\in\cl_{\Speca(M)}(M_0)$. 

\paragraph{}Write $N_i:=M_i\setminus Y$ for $i=1,\ldots,s$ and notice that $N_i$ is dense in $M_i$ and $N_i\cap N_j=\varnothing$ if $i\neq j$. As $N_i$ is closed in $N$, each $N_i$ is locally compact. If we denote the inclusions with ${\tt j}_i:N_i\hookrightarrow M_i$ and ${\tt i}_i:Y_i:=M_i\setminus N_i\hookrightarrow M_i$, we get a suitably arranged {\tt sa}-tuple $(N_i,M_i,Y_i,{\tt j}_i,{\tt i}_i)$.

By Theorem \ref{main}(ii) it holds $\Speca({\tt j})(\cl_{\Speca(N)}(N_i))=\cl_{\Speca(M)}(M_i)$. By \ref{closure}(ii) 
$$
\Specd(N_i)\cong\cl_{\Specd(N)}(N_i)\quad\text{and}\quad\Specd(M_i)\cong\cl_{\Specd(M)}(M_i)
$$ 
via the inclusions ${\tt k}_i:N_i\hookrightarrow N$ and ${\tt l}_i:M_i\hookrightarrow M$. Denote the unique prime ideal of ${\mathcal S}^*(M_i)$ whose image under $\Speca({\tt l}_i)$ is $\gtp$ with $\gtp_i$ for $i=1,\ldots,s$. 

As the semialgebraic sets $N_i$ are pairwise disjoint closed connected subsets of $N$, the connected components of $\bigsqcup_{i=1}^sN_i$ are $N_1,\ldots,N_s$. By \ref{closure} \em the sets $\cl_{\Speca(N)}(N_i)$ are the connected components of 
$$
\cl_{\Speca(N)}\Big(\bigsqcup_{i=1}^sN_i\Big)
$$ 
\em and in particular they are disjoint.

As $\Speca(M)=\bigcup_{i=0}^s\cl_{\Speca(M)}(M_i)$ and $\gtp\not\in\cl_{\Speca(M)}(M_0)$, it holds $$
\gtp\in\bigcup_{i=1}^s\cl_{\Speca(M)}(M_i)\quad\text{and so}\quad\Speca({\tt j})^{-1}(\gtp)\subset\bigsqcup_{i=1}^s\cl_{\Speca(N)}(N_i).
$$
Consequently, the size of the fiber $\Speca({\tt j})^{-1}(\gtp)$ coincides with the sum of the sizes of the fibers 
$$
\Speca({\tt j}_i)^{-1}(\gtp_i)
$$ 
for $i=1,\ldots,s$. Denote the unique prime ideal of ${\mathcal S}(M_i)$ whose image under $\Speca({\tt l}_i)$ is $\widehat{\gtp}$ with $\widehat{\gtp}_i$ for $i=1,\ldots,s$. As $\widehat{\gtp}\cap{\mathcal W}_M=\varnothing$ and $\widehat{\gtp}\in\cl_{\Speca(M)}(M_i)$, one can check that $\widehat{\gtp}_i\cap{\mathcal W}_{M_i}=\varnothing$. By Lemma \ref{main0}(ii) $\{\widehat{\gtP}_i:=\widehat{\gtp}_i{\mathcal S}(M_i)\}=\Specs({\tt l}_i)^{-1}(\widehat{\gtP})$ and, as $\widehat{\gtP}\in\cl_{\Specs(M)}(M_i)$, it holds ${\tt d}_{M_i}(\widehat{\gtP}_i)={\tt d}_M(\widehat{\gtP})=d-1$. In addition, as $\widehat{\gtP}\in\cl_{\Speca(M)}(T)$ and $T\subset M_i$, we deduce $\widehat{\gtP}_i\in\cl_{\Speca(M_i)}(T)$.

\paragraph{}\label{cardcrucial}We claim: \em $\widehat{\gtp}_i$ contains exactly one minimal prime ideal of ${\mathcal S}^*(M_i)$\em. 

Indeed, fix $i=1,\ldots,s$ and suppose that there are two minimal prime ideals $\gta_1,\gta_2$ of ${\mathcal S}^*(M_i)$ contained in $\widehat{\gtp}_i$. By Theorem \ref{minimalnlc} ${\tt d}_{M_i}(\gta_j{\mathcal S}(M_i))=d$ for $j=1,2$. Let $g\in\widehat{\gtp}_i$ be such that $Z_{M_i}(g)=T$. By Lemma \ref{intcomp1} there exists $f_j\in\gta_j\setminus\gta_k$ if $j\neq k$ such that $Z_{M_i}(f_1^2+f_2^2)\subset Z_{M_i}(g)$ and $Z_{M_i}(f_j)$ is pure dimensional for $j=1,2$. Substituting $g$ with $g^2+f_1^2+f_2^2$, we may assume $Z_{M_i}(f_1^2+f_2^2)=Z_{M_i}(g)\subset T$. Note that $\dim(Z_{M_i}(g))=d-1$ because ${\tt d}_{M_i}(\widehat{\gtP}_i)=d-1$.

Let $(K_i,\Phi_i)$ be a semialgebraic triangulation of $\sigma_i$ compatible with all its faces, $Z_{M_i}(f_1)$ and $Z_{M_i}(f_2)$. Let $C$ be the image under $\Phi_i$ of the union of all simplices of $K_i$ of dimension $d-1$ contained in $\sigma_i\setminus T$ and all simplices of dimension $\leq d-2$. As $Z_{M_i}(f_j)$ is pure dimensional of dimension $d$, we conclude $Z_{M_i}(f_j)\setminus C=\bigcup_{\ell=1}^mS_\ell$ where $S_\ell$ is either 
\begin{itemize}
\item[(1)] the image under $\Phi_i$ of an open simplex of dimension $d$ or 
\item[(2)] the image under $\Phi_i$ of the union of an open simplex $\upsilon$ of dimension $d$ and an open simplex $\epsilon$ of dimension $d-1$ adherent to $\upsilon$ and contained in $\Phi_i^{-1}(\tau^0)$.
\end{itemize}
Thus, $Z_{M_i}(f_j)\setminus C$ is an open subset of $\sigma_i$. In particular, 
$$
Z_{M_i}(g)\setminus C=Z_{M_i}(f_1^2+f_2^2)\setminus C=(Z_{M_i}(f_1)\setminus C)\cap(Z_{M_i}(f_2)\setminus C)
$$ 
is an open subset of $\sigma_i$. As $\dim(Z_{M_i}(g))=d-1$ and $\dim(\sigma_i)=d$, we deduce $Z_{M_i}(g)\setminus C=\varnothing$, so $Z_{M_i}(g)\subset T\cap C$. But this is impossible because $T\cap C$ has dimension $\leq d-2$ and $Z_{M_i}(g)$ has dimension $d-1$. We conclude that $\widehat{\gtp}_i$ contains only one minimal prime ideal, as required.

\paragraph{}$\hspace{-1.5mm}$ \em There exist exactly $s$ minimal prime ideals of ${\mathcal S}(M)$ contained in $\widehat{\gtp}$\em.

Let $\gta_i$ be the unique minimal prime of ${\mathcal S}^*(M_i)$ such that $\gta_i\subset\widehat{\gtp}_i$. It holds $\gtq_i:=\Speca({\tt l}_i)(\gta_i)\subsetneq\widehat{\gtp}$, so $\gta_i\cap{\mathcal W}_M=\varnothing$. As $\gta_i{\mathcal S}(M_i)$ is a minimal prime ideal of ${\mathcal S}(M_i)$, it is a $z$-ideal, so $\gtQ_i:=\gtq_i{\mathcal S}(M)$ is by \ref{zideal3} also a $z$-ideal. Consequently, by \ref{sd}(ii) $d\geq{\tt d}_M(\gtQ_i)>{\tt d}_M(\widehat{\gtP})=d-1$, so $\gtQ_i$ is a minimal prime ideal of ${\mathcal S}(M)$ by Theorem \ref{minimalnlc}. Thus, $\gtq_i$ is a minimal prime ideal of ${\mathcal S}^*(M)$. Of course, $\gtq_i\neq\gtq_j$ if $i\neq j$ because otherwise 
$$
\gtp_i\in\cl_{\Speca(M)}(M_i)\cap\cl_{\Speca(M)}(M_j)=\cl_{\Speca(M)}(M_i\cap M_j)=\cl_{\Speca(M)}(T)
$$
and this is impossible because $\dim(T)=d-1$ and $\dim(Z(f))=d$ for each $f\in\gtp_i$. 

Conversely, let $\gtq$ be a minimal prime ideal of ${\mathcal S}(M)$ contained in $\widehat{\gtp}$. Then $\gtq\not\in\cl_{\Speca(M)}(M_0)$, so $\gtq\in\cl_{\Speca(M)}(M_i)$ for some $i=1,\ldots,s$. Since $\cl_{\Speca(M)}(M_i)\cong\Speca(M_i)$ and $\widehat{\gtp}_i$ contains exactly one minimal prime of ${\mathcal S}(M_i)$, we deduce $\gtq=\gtq_i$, so there are exactly $s$ minimal prime ideals of ${\mathcal S}(M)$ contained in $\widehat{\gtp}$.

\paragraph{} Finally, as $\widehat{\gtp}_i$ contains exactly one minimal prime ideal of ${\mathcal S}(M_i)$ and ${\tt d}_{M_i}(\widehat{\gtP}_i)=d-1$, we deduce by \ref{fiberspec3b} that $\Speca({\tt j}_i)^{-1}(\gtp_i)$ is a singleton. Thus, the size of $\Speca({\tt j})^{-1}(\gtp)$ is equal to $s$, so it coincides with the number of minimal prime ideals of ${\mathcal S}(M)$ contained in $\widehat{\gtp}$, as required.
\end{proof}

\end{document}